\newtheorem{thm}{Theorem}[section]
\newtheorem{cor}[thm]{Corollary}
\newtheorem{lem}[thm]{Lemma}
\newtheorem{prop}[thm]{Proposition}
\theoremstyle{definition}
\newtheorem{rem}[thm]{Remark}
\numberwithin{equation}{section}
\newcommand{\innerprod}[2]{\left\langle #1,\, #2 \right\rangle} 
\newcommand{\one}{\mathbbm{1}}
\newcommand{\braces}[1]{{\rm (}#1{\rm )}}
\newcommand{\rmref}[1]{{\rm\ref{#1}}}
\renewcommand{\Re}{\operatorname{Re}}
\newcommand{\linspan}{\operatorname{span}}
\newcommand{\ran}{\operatorname{ran}}
\newcommand{\dom}{\operatorname{dom}}
\newcommand{\wt}{\widetilde}
\newcommand{\wh}{\widehat}
\newcommand{\<}{\langle}
\renewcommand{\>}{\rangle}
\newcommand{\R}{\ensuremath{\mathbb R}}    
\newcommand{\C}{\ensuremath{\mathbb C}}    
\newcommand{\N}{\ensuremath{\mathbb N}}    
\newcommand{\Z}{\ensuremath{\mathbb Z}}    
\newcommand{\calA}{\mathcal A}         
\newcommand{\calB}{\mathcal B}         \newcommand{\frakB}{\mathfrak B}
\newcommand{\calE}{\mathcal E}
\newcommand{\calH}{\mathcal H}         
\newcommand{\calI}{\mathcal I}         
\newcommand{\calK}{\mathcal K}         
\newcommand{\calL}{\mathcal L}
\newcommand{\calX}{\mathcal X}
				 \newcommand{\bE}{\mathbb E}
				 \newcommand{\bH}{\mathbb H}
				 \newcommand{\bP}{\mathbb P}
\newcommand{\la}{\lambda}
\newcommand{\veps}{\varepsilon}
\newcommand{\vphi}{\varphi}
\newcommand{\Tr}{\operatorname{Tr}}
\newcommand{\rank}{\operatorname{rank}}
\newcommand{\dist}{\operatorname{dist}}
\newcommand{\supp}{\operatorname{supp}}
\newcommand{\diag}{\operatorname{diag}}
\title[Error bounds for kernel-based approximations of the Koopman operator]{Error bounds for kernel-based approximations of the Koopman operator}
\begin{document}
\begin{abstract}
We consider the data-driven approximation of the Koopman operator for stochastic differential equations on reproducing kernel Hilbert spaces (RKHS). Our focus is on the estimation error if the data are collected from long-term ergodic simulations. We derive both an exact expression for the variance of the kernel cross-covariance operator, measured in the Hilbert-Schmidt norm, and probabilistic bounds for the finite-data estimation error. Moreover, we derive a bound on the prediction error of observables in the RKHS using a finite Mercer series expansion. Further, assuming Koopman-invariance of the RKHS, we provide bounds on the full approximation error. Numerical experiments using the Ornstein-Uhlenbeck process illustrate our results.
\end{abstract}

\author[F.~Philipp]{Friedrich Philipp}
\address{{\bf F.~Philipp}
	Technische Universit\"at Ilmenau, Institute for Mathematics,
	Weimarer Stra\ss e 25, D-98693 Ilmenau, Germany}
\email{friedrich.philipp@tu-ilmenau.de}

\author[M.~Schaller]{Manuel Schaller}
\address{{\bf M.~Schaller}
	Technische Universit\"at Ilmenau, Institute for Mathematics,
	Weimarer Stra\ss e 25, D-98693 Ilmenau, Germany}
\email{manuel.schaller@tu-ilmenau.de}

\author[K.~Worthmann]{Karl Worthmann}
\address{{\bf K.~Worthmann}
	Technische Universit\"at Ilmenau, Institute for Mathematics,
	Weimarer Stra\ss e 25, D-98693 Ilmenau, Germany}
\email{karl.worthmann@tu-ilmenau.de}

\author[S.~Peitz]{Sebastian Peitz}
\address{{\bf S.~Peitz}
	Paderborn University, Department of Computer Science, Data Science for Engineering, Germany}
\email{sebastian.peitz@upb.de}

\author[F.~N\"uske]{Feliks N\"uske}
\address{{\bf F.~N\"uske}
	Max Planck Institute for Dynamics of Complex Technical Systems, Magdeburg, Germany}
\email{nueske@mpi-magdeburg.mpg.de}

\maketitle

\section{Introduction}
The Koopman operator~\cite{Koo31} has become an essential tool in the modeling process of complex dynamical systems based on simulation or measurement data. The philosophy of the Koopman approach is that for a (usually non-linear) dynamical system on a finite-dimensional 
space, the time-evolution of expectation values of observable functions 
satisfies a linear differential equation. 
Hence, after ``lifting'' the dynamical system into an infinite-dimensional function space of observables, linear methods become available for its analysis. The second step is then to notice that traditional Galerkin approximations of the Koopman operator can be consistently estimated from simulation or measurement data, establishing the fundamental connection between the Koopman approach and modern data science. 
Koopman methods have found widespread application in system identification~\cite{brunton16}, control~\cite{KordMezi18,peitz_klus19,KordMezi20,kaiser21,SchaWort22}, sensor placement~\cite{manohar18}, molecular dynamics~\cite{schuette99,prinz2011,noe13,nueske14,klus16,wu20}, and many other fields. We refer to \cite{klus18_rev,mauroy20,brunton22} for comprehensive reviews of the state of the art.

The fundamental numerical method for the Koopman approach is \emph{Extended Dynamic Mode Decomposition} (EDMD)~\cite{williams15}, which allows to learn a Galerkin approximation of the Koopman operator from finite (simulation or measurement) data on a subspace spanned by a finite set of observables, 
often called 
dictionary. 
An appropriate choice of 
said dictionary is a 
challenging problem. 
In light of this issue, representations of the Koopman operator on large approximation spaces have been considered in recent years, including deep neural networks~\cite{lusch18,mardt18}, tensor product spaces~\cite{klus16_tensor,nueske21}, and \emph{reproducing kernel Hilbert spaces} (RKHS)~\cite{williams15_kernel,gia19,klus20}. See also~\cite{bblshh,das2021,gumah22,gab,gnaab,kostic22,kostic23} for recent studies on the use of reproducing kernels in the context of dynamical systems. In Reference~\cite{klus20} it was shown that by means of the integral operator associated to an RKHS, it is possible to construct a type of Galerkin approximation of the Koopman operator. 
The central object are (cross-)covariance operators, which can be 
estimated from 
data, using only evaluations of the feature map. Due to the relative simplicity of the resulting numerical algorithms on the one hand, and the rich approximation properties of reproducing kernels on the other hand, kernel methods have emerged as a promising candidate to overcome the fundamental problem of dictionary selection.

A key 
question 
is the quantification of the estimation error for (compressed\footnote{A compression of a linear operator $T$ to a subspace $M$ is given by $PT|_M$, where $P$ denotes a projection onto $M$.}) Koopman operators. For finite dictionaries and independent, identically distributed (i.i.d.) samples, error estimates were provided in~\cite{kurdila18,nueske23}, see also \cite{zuazua21} for the ODE case and~\cite{SchaWort22} for an extension to control-affine systems. The estimation error for cross-covariance operators on kernel spaces was considered in~\cite{mollenhauer22}, where general concentration inequalities were employed. The data were also allowed to be correlated, and mixing coefficients were used to account for the lack of independence.
In this article, we take a different route and follow the approach of our previous paper~\cite{nueske23}, where we, in addition, also derived error estimates for the Koopman generator and operator 
for finite dictionaries and data collected from long-term, ergodic trajectories. 
This setting is relevant in many areas of science, where sampling i.i.d.\ from an unknown stationary distribution
is practically infeasible, e.g., in fluid or molecular dynamics. The centerpiece of our results was an exact expression for the variance of the finite-data estimator, which can be bounded by an asymptotic variance. The asymptotic variance by itself is a highly interesting dynamical quantity, which can also be described in terms of Poisson equations for the generator \cite[Section 3]{lelievre16}.

We consider the Koopman semigroup $(K^t)_{t\ge 0}$ generated by a stochastic differential equation on the space $L^2_\mu$, where $\mu$ is a probability measure which is invariant w.r.t.\ the associated Markov process. We study the action of $K^t$ on observables in an RKHS $\bH$ which is densely and compactly embedded in $L^2_\mu$. If this action is considered through the ``lens'' of the kernel integral operator $\calE : L^2_\mu \to \bH$ (see Section~\ref{subsec:rkhs}), we arrive at a family of operators $C_\bH^t = \calE K^t\calE^*$ (cf.\ Figure~\ref{fig:diag1}). The action of $C_\bH^t : \bH\to\bH$ is that of a cross-covariance operator:
$$
C_\bH^t\psi = \int (K^t\psi)(x)k(x,\cdot)\,d\mu(x),\qquad\psi\in\bH,
$$
where $k(\cdot,\cdot)$ is the kernel generating the RKHS $\bH$. These operators possess canonical empirical estimators based on finite simulation data, which only require evaluations of the feature map.

\begin{figure}[ht!]
\centering
\adjustbox{scale=1.3,center}{ 
\begin{tikzcd}[column sep = large]
L^2_\mu \arrow{r}{K^t}  & L^2_\mu\arrow{d}{\calE} \\
\bH \arrow{u}{\calE^*} \arrow{r}{C^t_{\bH}}& \bH 
\end{tikzcd}}
\caption{Diagram illustrating the different operators involved}
\label{fig:diag1}
\end{figure}

Our contribution, illustrated in Figure \ref{fig:schaubild}, is two-fold. In our first main result, Theorem \ref{t:Eest}, we provide an exact formula for the Hilbert-Schmidt variance of the canonical empirical estimator $\wh C_\bH^{m, t}$ of the cross-covariance operator $C_\bH^t$, for $m$ data points sampled from a long ergodic simulation. This result holds under the very mild assumption that $\la=1$ is a simple\footnote{Simplicity of $\la=1$ as an eigenvalue of $K^t$ is already guaranteed by ergodicity, cf.\ \ref{a:ergodicity}. Hence, the essential assumption here is that the eigenvalue is isolated.} isolated eigenvalue of $K^t$, which does not exclude deterministic systems, extends the findings in~\cite{nueske23} to the kernel setting and no longer depends on the dictionary size (which would be infinite, at any rate). Furthermore, the result allows for probabilistic estimates for the error $\|\wh C_\bH^{m,t} - C_\bH^t\|_{HS}$, see Proposition \ref{p:prob_est}. 

As a second main result, we propose an empirical estimator for the restriction of the Koopman operator $K^t$ to $\bH$, truncated to finitely many terms of its estimated Mercer series expansion, and prove a probabilistic bound for the resulting estimation error in Theorem \ref{t:main}, measured in the operator norm for bounded linear maps from $\bH$ to $L^2_\mu$. This result can be seen as a bound on the prediction error for the RKHS-based Koopman operator due to the use of finite data. In the situation where the RKHS is invariant under the Koopman operator we are able to complement the preceding error analysis with a bound on the full approximation error in Theorem \ref{t:proj_err}.

Finally, we illustrate our results for a one-dimensional Ornstein-Uhlenbeck (OU) process. For this simple test case, all quantities appearing in our error estimates are known analytically and can be well approximated numerically. Therefore, we are able to provide a detailed comparison between the error bound obtained from our results and the actual errors observed for finite data. Our experiments show that our bounds for the estimation error of the cross-covariance operator are accurate, and that the corrections we introduced to account for the inter-dependence of the data are indeed required. Concerning the prediction error, we find our theoretical bounds still far too conservative, which reflects the problem of accounting for the effect of inverting the mass matrix in traditional EDMD. This finding indicates that additional research is required on this end.

\begin{figure}[ht!]
\centering
\includegraphics[width=\columnwidth]{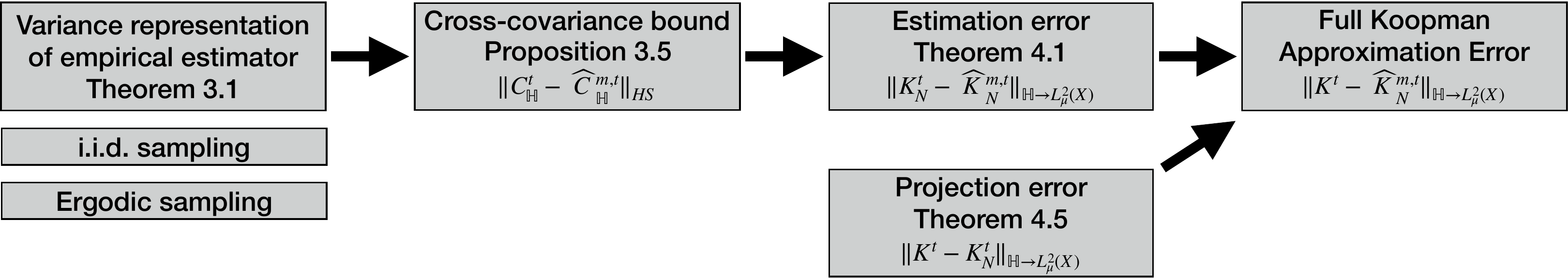}
\caption{Illustration of main results}
\label{fig:schaubild}
\end{figure}

The paper is structured as follows: the setting is introduced in Section~\ref{sec:prelim}. The result concerning the variance of the empirical cross-covariance operator, Theorem \ref{t:Eest}, is presented and proved in Section~\ref{sec:res1}, while our bound for the prediction error is part of Theorem \ref{t:main} in Section~\ref{sec:res2}. Numerical experiments are shown in Section~\ref{sec:numerics}, conclusions are drawn in Section~\ref{sec:conclusions}.

\section{Preliminaries}\label{sec:prelim}
In this section, we review the required background on stochastic differential equations (Section~\ref{subsec:sde}), reproducing kernel Hilbert spaces (Section~\ref{subsec:rkhs}), Koopman operators (Section~\ref{subsec:koopman}), their representations on an RKHS (Section~\ref{subsec:kernel_koop}),and the associated empirical estimators (Section~\ref{subsec:emp}). The results in this section can all be found in the literature, but we list them here at any rate to achieve a self-contained presentation. Selected proofs are also shown in the appendix for the reader's convenience. Below, we list the most frequently used notation:

\begin{center}
\begin{tabular}{| c | l |}
\hline
\textbf{Symbol} & \textbf{Meaning} \\
\hline
$b, \sigma$ & Drift and diffusion of SDE \\
$\calX$ & State space \\
$\mu$ & Invariant measure \\
$L^p_\mu(\calX)$ & $\mu$-weighted Lebesgue space on $\calX$ \\
$k$ & Kernel function \\
$\Phi$ & Kernel feature map \\
$\varphi$ & Diagonal of the kernel \\
$K^t$ & Koopman operator \\
$\calL$ & Infinitesimal generator \\
$\mu_j, \psi_j$ & Eigenvalues and eigenfunctions of $\calL$ \\
$C^t_\bH$ & Time-lagged kernel cross-covariance operator \\
$C_\bH$ & Kernel covariance operator\\
\hline
\end{tabular}
\end{center}

\subsection{Stochastic differential equations}\label{subsec:sde}
Let $\calX\subset\R^d$ and let a stochastic differential equation (SDE) with drift vector field $b : \calX\to\R^d$ and diffusion matrix field $\sigma : \calX\to\R^{d\times d}$ be given, i.e.,
\begin{equation}\label{e:SDE}
dX_t = b(X_t)\,dt + \sigma(X_t)\,dW_t,
\end{equation}
where $W_t$ is $d$-dimensional Brownian motion. We assume that both $b$ and $\sigma$ are Lipschitz-continuous and that $(1+\|\cdot\|_2)^{-1}[\|b\|_2 + \|\sigma\|_F]$ is bounded on $\calX$. Then \cite[Theorem 5.2.1]{oe} guarantees the existence of a unique solution $(X_t)_{t\ge 0}$ to \eqref{e:SDE}.

The solution $(X_t)_{t\ge 0}$ constitutes a continuous-time Markov process whose transition kernel will be denoted by $\rho_t : \calX\times \calB_\calX\to\R$, where $\calB_\calX$ denotes the Borel $\sigma$-algebra on $\calX$. Then $\rho_t(x,\cdot)$ is a probability measure for all $x\in \calX$, and for each $A\in\calB_\calX$ we have that $\rho_t(\cdot,A)$ is a representative of the conditional probability for $A$ containing $X_t$ given $X_0=\,\cdot\,$, i.e.,
$$
\rho_t(x,A) = \bP(X_t\in A|X_0=x)\quad\text{for $\bP_{X_0}$-a.e. $x\in \calX$},
$$
where $\bP_{X_0}$ denotes the marginal distribution of $X_0$.

Throughout, we will assume the existence of an {\em invariant} (Borel) {\em probability measure} $\mu$ for the Markov process $(X_t)_{t\ge 0}$, i.e., we have
\begin{equation}\label{e:set_inv}
\int\rho_t(x,A)\,d\mu(x) = \mu(A)
\end{equation}
for all $t\ge 0$.


In addition to invariance, we assume that $\mu$ is \emph{ergodic}, meaning that for any $t > 0$ every $\rho_t$-invariant set $A$ (that is, $\rho_t(x, A) = 1$ for all $x\in A$) satisfies $\mu(A)\in\{0,1\}$. In this case, the Birkhoff ergodic theorem~\cite[Theorem 9.6]{kall} (see also \eqref{e:birkhoff}) and its generalizations apply, and allow us to calculate expectations w.r.t. $\mu$ using long-time averages over simulation data.

We let $\|\cdot\|_p$ denote the $L_\mu^p(\calX)$-norm, $1\le p < \infty$. In the particular case $p=2$, scalar product and norm on the Hilbert space $L_\mu^2(\calX)$ will be denoted by $\<\cdot\,,\cdot\>_\mu$ and $\|\cdot\|_\mu$, respectively.

\subsection{Reproducing kernel Hilbert spaces}
\label{subsec:rkhs}
In what follows, let $k : \calX\times \calX\to\R$ be a continuous and symmetric positive definite kernel, that is, we have $k(x,y) = k(y,x)$ for all $x,y\in \calX$ and
$$
\sum_{i,j=1}^m k(x_i,x_j)c_ic_j\,\ge\,0
$$
for all choices of $x_1,\ldots,x_m\in \calX$ and $c_1,\ldots,c_m\in\R$. It is well known that $k$ generates a so-called {\em reproducing kernel Hilbert space} (RKHS) \cite{a,bt,pr} $(\bH,\<\cdot\,,\cdot\>)$ of continuous functions, such that for $\psi \in \bH$ the {\em reproducing property}
\begin{equation}\label{e:reproducing_property}
\psi(x) = \<\psi,\Phi(x)\>,\qquad x\in \calX,
\end{equation}
holds, where $\Phi : \calX\to\bH$ denotes the so-called {\em feature map} corresponding to the kernel $k$, i.e.,
$$
\Phi(x) = k(x,\cdot),\qquad x\in \calX.
$$
In the sequel, we shall denote the norm on $\bH$ by $\|\cdot\|$ and the kernel diagonal by $\vphi$:
$$
\vphi(x) = k(x,x),\qquad x\in \calX.
$$
Then for $x\in \calX$ we have
$$
\|\Phi(x)\|^2 = \<\Phi(x),\Phi(x)\> = \<k(x,\cdot),k(x,\cdot)\> = k(x,x) = \vphi(x).
$$
We shall frequently make use of the following estimate:
$$
|k(x,y)| = |\<\Phi(x),\Phi(y)\>|\le\|\Phi(x)\|\|\Phi(y)\| = \sqrt{\vphi(x)\vphi(y)}.
$$
In particular, it shows that $k$ is bounded if and only if its diagonal $\vphi$ is bounded.

By $\calL_\mu^p(\calX)$, $p\in [1,\infty)$, we denote the space of all {\em functions} (not equivalence classes) on $\calX$ with a finite $p$-norm $\|\cdot\|_p$. Henceforth, we shall impose the following

\medskip\noindent
{\bf Compatibility Assumptions:}
\begin{enumerate}
\item[(A1)] $\vphi\in\calL^2_\mu(\calX)$.
\item[(A2)] If $\psi\in L^2_\mu(\calX)$ such that $\int\!\int k(x,y)\psi(x)\psi(y)\,d\mu(x)\,d\mu(y) = 0$, then $\psi=0$.
\item[(A3)] If $\psi\in\bH$ such that $\psi(x)=0$ for $\mu$-a.e.\ $x\in \calX$, then $\psi(x) = 0$ for all $x\in \calX$.
\end{enumerate}

Many of the statements in this subsection can also be found in \cite[Chapter 4]{sc}. However, as we aim to present the contents in a self-contained way, we provide the proofs in \ref{a:proofs}.

The following lemma explains the meaning of the compatibility assumptions (A1) and (A2), cf.\ \cite[Theorem 4.26]{sc}.

\begin{lem}\label{l:basics0}
Under the assumption that $\vphi\in\calL^1_\mu(\calX)$ \braces{in particular, under assumption {\rm (A1)}}, we have that $\bH\subset \calL^2_\mu(\calX)$ with
\begin{equation}\label{e:bounded_embedding}
\|\psi\|_\mu\,\le\,\sqrt{\|\vphi\|_1}\cdot\|\psi\|,\qquad\psi\in\bH,
\end{equation}
and assumption {\rm (A2)} is equivalent to the density of $\bH$ in $\calL^2_\mu(\calX)$.
\end{lem}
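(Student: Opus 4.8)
The plan is to establish the embedding inequality \eqref{e:bounded_embedding} first, and then derive the density characterization from it. For the embedding, I would fix $\psi\in\bH$ and use the reproducing property \eqref{e:reproducing_property} together with Cauchy--Schwarz in $\bH$: for every $x\in\calX$,
\[
|\psi(x)|^2 = |\<\psi,\Phi(x)\>|^2 \le \|\psi\|^2\,\|\Phi(x)\|^2 = \|\psi\|^2\,\vphi(x).
\]
Integrating this pointwise bound against $\mu$ gives $\|\psi\|_\mu^2 \le \|\psi\|^2\int\vphi\,d\mu = \|\vphi\|_1\|\psi\|^2$, which is exactly \eqref{e:bounded_embedding}; since $\vphi\in\calL^1_\mu(\calX)$ the right-hand side is finite, so in particular $\psi\in\calL^2_\mu(\calX)$ and $\bH\subset\calL^2_\mu(\calX)$. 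Note this shows $\bH$ embeds into the space of genuine functions $\calL^2_\mu(\calX)$; composing with the quotient map yields a bounded embedding into $L^2_\mu(\calX)$ as well, which is what is needed for the density statement.

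For the equivalence of (A2) with density of $\bH$ in $L^2_\mu(\calX)$, the idea is to observe that the double integral in (A2) is precisely a quadratic form of the integral operator. Introduce the (Hilbert--Schmidt, by (A1)) kernel integral operator on $L^2_\mu(\calX)$ — or equivalently work with $\calE\colon L^2_\mu\to\bH$ and its adjoint as in Section~\ref{subsec:rkhs}: for $\psi\in L^2_\mu(\calX)$, $\calE^*\psi = \int\psi(x)\Phi(x)\,d\mu(x)\in\bH$, and then
\[
\|\calE^*\psi\|^2 = \Big\<\int\psi(x)\Phi(x)\,d\mu(x),\int\psi(y)\Phi(y)\,d\mu(y)\Big\>
= \iint k(x,y)\psi(x)\psi(y)\,d\mu(x)\,d\mu(y).
\]
Hence the hypothesis in (A2) says $\ker\calE^* = \{0\}$, i.e.\ $\calE^*$ is injective. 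Since $\calE^*$ is the adjoint of the embedding $\calE$ (the inclusion $\bH\hookrightarrow L^2_\mu$, bounded by the first part), injectivity of $\calE^*$ is equivalent to $\overline{\ran\calE} = L^2_\mu(\calX)$, i.e.\ to the density of $\bH$ in $L^2_\mu(\calX)$. One should be a little careful justifying the interchange of integral and inner product (dominated convergence / Bochner-integral arguments, using $\|\Phi(x)\|=\sqrt{\vphi(x)}$ and $\vphi\in\calL^1_\mu\cap\calL^2_\mu$ to control integrability), and one should check that $\calE^*$ so defined really is the Hilbert-space adjoint of the embedding by testing against $\Phi(y)$ and invoking the reproducing property.

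The main obstacle I anticipate is not conceptual but bookkeeping: carefully setting up the Bochner integral $\int\psi(x)\Phi(x)\,d\mu(x)$ as an element of $\bH$ and verifying the identity $\<\calE^*\psi,\calE^*\psi\> = \iint k\psi\psi\,d\mu\,d\mu$ rigorously, as well as confirming that this operator is the genuine adjoint of the bounded inclusion $\bH\hookrightarrow L^2_\mu(\calX)$ (which uses (A3) to pass cleanly between $\calL^2_\mu$ and $L^2_\mu$ without ambiguity). Once that is in place, the equivalence ``$\calE^*$ injective $\iff$ $\ran\calE$ dense'' is the standard closed-range/orthogonal-complement fact $\ker\calE^* = (\ran\calE)^\perp$ for bounded Hilbert-space operators, and the proof concludes.
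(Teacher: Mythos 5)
Your proposal is correct and follows essentially the same route as the paper: the embedding inequality via the reproducing property and Cauchy--Schwarz, and the density equivalence by recognizing the double integral in (A2) as $\bigl\|\int\psi(x)\Phi(x)\,d\mu(x)\bigr\|^2$ and using the adjoint relation between the integral operator and the inclusion $\bH\hookrightarrow L^2_\mu(\calX)$ (the paper unwinds $\ker T^*=(\ran T)^\perp$ by hand via the feature maps $\Phi(y)$, whereas you invoke it abstractly). Only watch your notation: you first declare $\calE\colon L^2_\mu\to\bH$ but then use $\calE^*$ for that integral operator, which reverses the paper's convention and contradicts your own declaration; this is cosmetic and does not affect the argument.
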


We have meticulously distinguished between functions and equivalence classes as there might be distinct functions $\phi$ and $\psi$ in $\bH$, which are equal $\mu$-almost everywhere\footnote{For example, if $\mu = \delta_a$ and $\phi(a) = \psi(a)$}, i.e., $\phi=\psi$ in $L^2_\mu(\calX)$. The compatibility assumption (A3) prohibits this situation so that $\bH$ can in fact be seen as a subspace of $L^2_\mu(\calX)$, which is then densely and continuously embedded.

\begin{rem}
(a) Condition (A1) implies $k\in L^4_{\mu\otimes\mu}(\calX\times \calX)$, where $\mu \otimes \mu$ is the product measure on $\calX\times \calX$.

\smallskip
\noindent(b) The density of $\bH$ in $L^2_\mu(\calX)$ is strongly related to the term {\em universality} in the literature, see \cite{sfl}.

\smallskip
\noindent(c) Condition (A3) holds if $\supp\mu = \calX$, cf.\ \cite[Exercise 4.6]{sc}.
\end{rem}

It immediately follows from
\begin{align}\label{e:calE_bounded}
\int|\psi(x)|\|\Phi(x)\|\,d\mu(x)\le\|\psi\|_\mu\|\vphi\|_1^{1/2},
\end{align}
for $\psi\in L^2_\mu(\calX)$ that the linear operator $\calE : L^2_\mu(\calX)\to\bH$, defined by
$$
\calE\psi := \int\psi(x)\Phi(x)\,d\mu(x),\qquad\psi\in L^2_\mu(\calX),
$$
is well defined (as a Bochner integral in $\bH$) and bounded with operator norm not larger than~$\|\vphi\|_1^{1/2}$.

\begin{rem}
The so-called {\em kernel mean embedding} $\calE_k$, mapping probability measures $\nu$ on $\calX$ to the RKHS $\bH$, is defined by $\calE_k\nu = \int\Phi(x)\,d\nu(x)$, see, e.g., \cite{sgss}. Hence, we have $\calE\psi = \calE_k\nu$ with $d\nu = \psi\,d\mu$.
\end{rem}

Note that the operator $\calE$ is not an embedding in strict mathematical terms. The terminology {\em embedding} rather applies to its adjoint $\calE^*$. Indeed, the operator $\calE$ enjoys the simple but important property:
\begin{align}\label{e:adj}
\<\calE\psi,\eta\> = \int\psi(x)\<\Phi(x),\eta\>\,d\mu(x) = \int\psi(x)\eta(x)\,d\mu(x) = \<\psi,\eta\>_\mu
\end{align}
for $\psi\in L^2_\mu(\calX)$ and $\eta\in\bH$. This implies that the adjoint operator $\calE^* : \bH\to L^2_\mu(\calX)$ is the inclusion operator from $\bH$ into $L^2_\mu(\calX)$, i.e.,
\begin{align}\label{e:embedding}
\calE^*\eta = \eta,\qquad \eta\in\bH.
\end{align}
We shall further define the covariance operator\footnote{In what follows, by $L(\calH,\calK)$ we denote the set of all bounded (i.e., continuous) linear operators between Hilbert spaces $\calH$ and $\calK$. As usual, we also set $L(\calH) := L(\calH,\calH)$.}
$$
C_\bH := \calE\calE^*\,\in\,L(\bH).
$$
Recall that a linear operator $T\in L(\calH)$ on a Hilbert space $\calH$ is {\em trace class} if for some (and hence for each) orthonormal basis $(e_j)_{j\in\N}$ of $\calH$ we have that $\sum_{j=1}^\infty\<(T^*T)^{1/2}e_i,e_i\> < \infty$. A linear operator $S\in L(\calH,\calK)$ between Hilbert spaces $\calH$ and $\calK$ is said to be {\em Hilbert-Schmidt} \cite[Chapter III.9]{gk} if $S^*S$ is trace class, i.e., $\|S\|_{HS}^2 := \sum_{j=1}^\infty\|Se_i\|^2 < \infty$ for some (and hence for each) orthonormal basis $(e_j)_{j\in\N}$.

\begin{lem}[{\cite[Theorem 4.27]{sc}}]\label{l:ECH}
Let the Compatibility Assumptions {\rm (A1)--(A3)} be satisfied. Then the following hold.
\begin{enumerate}
\item[{\rm (a)}] The operator $\calE$ is an injective Hilbert-Schmidt operator with
$$
\|\calE\|_{HS}^2 = \|\vphi\|_1.
$$
\item[{\rm (b)}] The space $\bH$ is densely and compactly embedded in $L^2_\mu(\calX)$.
\item[{\rm (c)}] The operator $C_\bH$ is an injective non-negative self-adjoint trace class operator.
\end{enumerate} 
\end{lem}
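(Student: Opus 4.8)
The plan is to derive all three parts from the identity \eqref{e:adj}, the fact \eqref{e:embedding} that $\calE^*$ is the inclusion $\bH\hookrightarrow L^2_\mu(\calX)$, and one Hilbert--Schmidt computation carried out in an orthonormal basis of $\bH$; the remainder is standard operator theory.

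First, for (a), I would obtain injectivity of $\calE$ by applying \eqref{e:adj} with $\eta=\calE\psi\in\bH$ and using the reproducing property to write $(\calE\psi)(x)=\int k(x,y)\psi(y)\,d\mu(y)$, which gives $\|\calE\psi\|^2 = \int\!\int k(x,y)\psi(x)\psi(y)\,d\mu(x)\,d\mu(y)$. Hence $\calE\psi=0$ forces this double integral to vanish, and (A2) yields $\psi=0$. (Alternatively, $\calE\psi=0$ gives $\<\psi,\eta\>_\mu=0$ for all $\eta\in\bH$, and $\bH$ is dense in $L^2_\mu(\calX)$ by Lemma~\ref{l:basics0}.) For the Hilbert--Schmidt property, use $\|\calE\|_{HS}=\|\calE^*\|_{HS}$ and pick any orthonormal basis $(f_j)_{j\in\N}$ of $\bH$. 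Since $\calE^*f_j=f_j$ by \eqref{e:embedding} and $f_j(x)=\<f_j,\Phi(x)\>$ by the reproducing property, Parseval's identity gives $\sum_j|f_j(x)|^2=\|\Phi(x)\|^2=\vphi(x)$ for every $x\in\calX$. As (A1) implies $\vphi\in\calL^1_\mu(\calX)$ (cf.\ Lemma~\ref{l:basics0}), Tonelli's theorem permits exchanging the non-negative sum and the integral:
$$
\|\calE\|_{HS}^2=\sum_j\|f_j\|_\mu^2=\sum_j\int|f_j(x)|^2\,d\mu(x)=\int\vphi(x)\,d\mu(x)=\|\vphi\|_1<\infty .
$$

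For (b), density of $\bH$ in $L^2_\mu(\calX)$ is precisely the content of (A2) via Lemma~\ref{l:basics0}, while (A3) guarantees that distinct elements of $\bH$ are never identified in $L^2_\mu(\calX)$, so $\bH$ genuinely is a linear subspace; continuity of the embedding is \eqref{e:bounded_embedding}. Compactness is then immediate: the embedding equals $\calE^*$, the adjoint of the Hilbert--Schmidt operator $\calE$ established in (a), hence is itself Hilbert--Schmidt and in particular compact. For (c), $C_\bH=\calE\calE^*$ is self-adjoint since $(\calE\calE^*)^*=\calE\calE^*$, and non-negative because $\<C_\bH\eta,\eta\>=\|\calE^*\eta\|_\mu^2\ge0$; if $C_\bH\eta=0$ then $\|\calE^*\eta\|_\mu=0$, i.e.\ $\eta=0$ $\mu$-a.e., so $\eta\equiv0$ by (A3), proving injectivity. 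Finally $C_\bH$ is the product of the Hilbert--Schmidt operators $\calE$ and $\calE^*$, hence trace class, with $\Tr C_\bH=\|\calE\|_{HS}^2=\|\vphi\|_1$.

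The only genuinely delicate point is the Hilbert--Schmidt computation in (a): one must note measurability of $x\mapsto\vphi(x)$ (immediate from continuity of $k$), the pointwise Parseval identity $\sum_j|\<f_j,\Phi(x)\>|^2=\|\Phi(x)\|^2$, and the legitimacy of swapping the sum and the integral — which is exactly where (A1) together with $\mu$ being a probability measure enters. Everything else follows directly from \eqref{e:adj}--\eqref{e:embedding} and the standard facts that a bounded operator and its adjoint have equal Hilbert--Schmidt norm and that a product of two Hilbert--Schmidt operators is trace class.
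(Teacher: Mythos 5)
Your proof is correct and follows essentially the same route as the paper's: injectivity of $\calE$ from the double-integral identity and (A2), the Hilbert--Schmidt norm via the pointwise Parseval identity $\sum_j|\<f_j,\Phi(x)\>|^2=\vphi(x)$, and (b), (c) from (A2)/(A3) together with standard facts about adjoints and products of Hilbert--Schmidt operators. The only cosmetic difference is that you deduce compactness of the embedding from $\|\calE^*\|_{HS}=\|\calE\|_{HS}$ whereas the paper invokes Schauder's theorem, and you make the Tonelli interchange explicit where the paper leaves it implicit.
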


The next theorem is due to Mercer and can be found in, e.g., \cite{rn}. It shows the existence of a particular orthonormal basis $(e_j)_{j=1}^\infty$ of $L^2_\mu(\calX)$ composed of eigenfunctions of $\calE^*\calE$, which we shall henceforth call the {\em Mercer basis} corresponding to the kernel $k$. Again for the sake of self-containedness, we give a short proof in \ref{a:proofs}.

\begin{thm}[Mercer's Theorem]\label{t:mercer}
There exists an orthonormal basis $(e_j)_{j=1}^\infty$ of $L^2_\mu(\calX)$ consisting of eigenfunctions of $\calE^*\calE$ with corresponding eigenvalues $\la_j > 0$ such that $\sum_{j=1}^\infty\la_j = \|\vphi\|_1 < \infty$. Furthermore, $(f_j)_{j=1}^\infty$ with $f_j = \sqrt{\la_j}e_j$ constitutes an orthonormal basis of $\bH$ consisting of eigenfunctions of $C_\bH$ with corresponding eigenvalues $\la_j$. Moreover, for all $x,y\in \calX$,
$$
k(x,y) = \sum_jf_j(x)f_j(y) = \sum_j\la_je_j(x)e_j(y),
$$
the series converges absolutely.
\end{thm}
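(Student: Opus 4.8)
The plan is to obtain the Mercer basis by applying the spectral theorem to the operator $\calE^*\calE$ on $L^2_\mu(\calX)$ and then transporting the resulting eigensystem to $\bH$ via $\calE$. First I would record the structural properties of $\calE^*\calE$: by Lemma~\ref{l:ECH}(a) the operator $\calE$ is injective and Hilbert--Schmidt, so $\calE^*\calE\in L(L^2_\mu(\calX))$ is self-adjoint, non-negative, trace class (hence compact), and injective, the latter because $\ker(\calE^*\calE)=\ker\calE=\{0\}$. The spectral theorem for compact self-adjoint operators then yields an orthonormal system $(e_j)_{j=1}^\infty$ of eigenfunctions with eigenvalues $\la_j>0$ --- non-negativity gives $\la_j\ge 0$ and injectivity rules out $\la_j=0$ --- and completeness of $(e_j)$ in $L^2_\mu(\calX)$ follows from $\overline{\ran(\calE^*\calE)}=\ker(\calE^*\calE)^\perp=L^2_\mu(\calX)$. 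The trace identity $\sum_j\la_j=\Tr(\calE^*\calE)=\|\calE\|_{HS}^2=\|\vphi\|_1<\infty$ is then immediate from Lemma~\ref{l:ECH}(a).

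Next I would pass to $\bH$. Since $\calE^*$ is the inclusion of $\bH$ into $L^2_\mu(\calX)$ by \eqref{e:embedding}, the relation $\calE^*(\calE e_j)=\calE^*\calE e_j=\la_j e_j$ shows that $\calE e_j\in\bH$ is a representative of $\la_j e_j$; dividing by $\la_j$, I fix from now on the $\bH$-representative $e_j=\la_j^{-1}\calE e_j$, which is legitimate by (A3), so that $f_j:=\sqrt{\la_j}\,e_j=\la_j^{-1/2}\calE e_j$ is a genuine element of $\bH$. Using the identity $\<\calE\psi,\eta\>=\<\psi,\eta\>_\mu$ for $\eta\in\bH$ from \eqref{e:adj}, a short computation gives $\<f_i,f_j\>=(\la_i\la_j)^{-1/2}\<\calE^*\calE e_i,e_j\>_\mu=\delta_{ij}$, so $(f_j)$ is orthonormal in $\bH$. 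Completeness is checked by taking $\psi\in\bH$ with $\<\psi,f_j\>=0$ for all $j$: then $0=\<\psi,\calE e_j\>=\<\calE^*\psi,e_j\>_\mu=\<\psi,e_j\>_\mu$, so $\psi=0$ in $L^2_\mu(\calX)$ and hence $\psi=0$ as a function by (A3). Finally $C_\bH f_j=\calE\calE^*f_j=\calE f_j=\sqrt{\la_j}\,\calE e_j=\la_j f_j$ exhibits the $f_j$ as eigenfunctions of $C_\bH$ with eigenvalues $\la_j$.

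For the kernel expansion I would expand the feature map in this basis: for fixed $x\in\calX$ the reproducing property \eqref{e:reproducing_property} gives $\<\Phi(x),f_j\>=f_j(x)$, hence $\Phi(x)=\sum_j f_j(x)f_j$ with convergence in $\bH$, and Parseval yields $\sum_j f_j(x)^2=\|\Phi(x)\|^2=\vphi(x)$. Taking the $\bH$-inner product of the expansions of $\Phi(x)$ and $\Phi(y)$ and using continuity of the inner product gives $k(x,y)=\<\Phi(x),\Phi(y)\>=\sum_j f_j(x)f_j(y)$; absolute convergence follows from Cauchy--Schwarz via $\sum_j|f_j(x)f_j(y)|\le(\sum_j f_j(x)^2)^{1/2}(\sum_j f_j(y)^2)^{1/2}=\sqrt{\vphi(x)\vphi(y)}<\infty$, and substituting $f_j=\sqrt{\la_j}e_j$ gives the second series. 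The proof is otherwise routine; the only point requiring genuine care is the bookkeeping between $L^2_\mu$-equivalence classes and honest functions --- ensuring that the eigenfunctions $e_j$ may be taken inside $\bH$ and that the pointwise identity for $k$ holds with the chosen representatives --- which is precisely what assumption (A3) is designed to supply.
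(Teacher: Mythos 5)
Your proposal is correct and follows essentially the same route as the paper's proof: apply the spectral theorem to the compact, injective, self-adjoint trace-class operator $\calE^*\calE$, transport the eigensystem to $\bH$ via $\calE$ using $\<\calE\psi,\eta\>=\<\psi,\eta\>_\mu$, and obtain the kernel expansion from Parseval applied to $\Phi(x)=\sum_j f_j(x)f_j$. Your explicit bookkeeping of $L^2_\mu$-representatives versus genuine functions via (A3) is a welcome refinement of a point the paper's proof passes over more quickly.
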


\subsection{The Koopman semigroup}
\label{subsec:koopman}
The {\em Koopman semigroup} $(K^t)_{t\ge 0}$ associated with the SDE~\eqref{e:SDE} is defined by
$$
(K^t\psi)(x) = \bE[\psi(X_t)|X_0=x] = \int\psi(y)\,\rho_t(x,dy),
$$
for $\psi\in B(\calX)$, the set of all bounded Borel-measurable functions on $\calX$, and 
$\rho_t(x,dy) = d\rho_t(x,\cdot)(y)$. It is easy to see that the invariance of $\mu$ is equivalent to the identity
\begin{equation}\label{e:invariance}
\int K^t\psi\,d\mu = \int\psi\,d\mu
\end{equation}
for all $t\ge 0$ and $\psi\in B(\calX)$ (which easily extends to functions $\psi\in L^1_\mu(\calX)$, see Proposition \ref{p:Kbounded}).

\begin{rem}
Note that in the case $\sigma=0$ the SDE \eqref{e:SDE} reduces to the deterministic ODE $\dot x = b(x)$. Then \eqref{e:invariance} implies $\int|\psi(\phi(t,x))|^2\,d\mu(x) = \int|\psi(x)|^2\,d\mu(x)$ for all $t\ge 0$ and all $\psi\in B(\calX)$, where $\phi(\cdot,x)$ is the solution of the initial value problem $\dot y = b(y)$, $y(0)=x$. Hence, the composition operator $K^t : \psi\mapsto\psi\circ\phi(t,\cdot)$ is unitary in $L^2_\mu(\calX)$.
\end{rem}

The proofs of the following two propositions can be found in \ref{a:proofs}.

\begin{prop}\label{p:Kbounded}
For each $p\in [1,\infty]$ and $t\ge 0$, $K^t$ extends uniquely to a bounded operator from $L^p_\mu(\calX)$ to itself with operator norm $\|K^t\|_{L^p_\mu\to L^p_\mu}\le 1$.
\end{prop}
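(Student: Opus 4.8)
The plan is to establish the contraction estimate first on the bounded Borel functions, where $K^t$ is already defined by $(K^t\psi)(x)=\int\psi(y)\,\rho_t(x,dy)$, and then to pass to $L^p_\mu(\calX)$ by density and completeness. For the first step I would fix $t\ge 0$ and $\psi\in B(\calX)$ and exploit that $\rho_t(x,\cdot)$ is a probability measure: by Jensen's inequality, pointwise in $x$ and for every $p\in[1,\infty)$,
$$
|(K^t\psi)(x)|^p=\left|\int\psi(y)\,\rho_t(x,dy)\right|^p\le\int|\psi(y)|^p\,\rho_t(x,dy)=\bigl(K^t|\psi|^p\bigr)(x).
$$
Integrating over $\calX$ against $\mu$ and invoking the invariance identity \eqref{e:invariance} for the bounded Borel function $|\psi|^p$ gives $\|K^t\psi\|_p^p\le\int K^t|\psi|^p\,d\mu=\int|\psi|^p\,d\mu=\|\psi\|_p^p$. (I would also record, as is standard for transition kernels, that $K^t\psi$ is again Borel measurable, so this makes sense.) Thus $K^t$ is a contraction on $B(\calX)\cap L^p_\mu(\calX)$ for every $p\in[1,\infty)$.

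Next I would promote the bound to all of $L^p_\mu(\calX)$. For $1\le p<\infty$ the simple Borel functions supported on sets of finite $\mu$-measure are dense in $L^p_\mu(\calX)$ and lie in $B(\calX)$, and the estimate above shows that if $\psi\in B(\calX)$ vanishes $\mu$-a.e.\ then $K^t\psi=0$ in $L^p_\mu(\calX)$; hence $\psi\mapsto K^t\psi$ descends to a well-defined contraction on a dense subspace of (equivalence classes in) $L^p_\mu(\calX)$, which by completeness of $L^p_\mu(\calX)$ extends uniquely to a bounded operator of norm $\le 1$. Uniqueness is automatic since a bounded operator is determined by its values on a dense set. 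For $p=\infty$ no extension is needed in the strict sense, since every class in $L^\infty_\mu(\calX)$ has a bounded Borel representative; it only remains to show that $K^t$ is well defined on classes and contractive there.

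The step I expect to require the most care is precisely this $p=\infty$ case, because the $L^\infty_\mu$-norm is an \emph{essential} supremum while $\rho_t(x,\cdot)$ need not be absolutely continuous with respect to $\mu$, so one cannot directly bound $\int\psi\,\rho_t(x,dy)$ by $\|\psi\|_\infty$. The remedy is once more invariance: given $\psi$ with $c:=\|\psi\|_\infty$, choose a $\mu$-null Borel set $N$ with $|\psi|\le c$ on $\calX\setminus N$; then $\int\rho_t(x,N)\,d\mu(x)=\mu(N)=0$ by \eqref{e:set_inv}, so $\rho_t(x,N)=0$ for $\mu$-a.e.\ $x$, and for such $x$ one has $|(K^t\psi)(x)|\le\int_{\calX\setminus N}|\psi|\,\rho_t(x,dy)\le c$, whence $\|K^t\psi\|_\infty\le\|\psi\|_\infty$; the same null-set argument shows the assignment is independent of the chosen representative. (Alternatively, one could derive the endpoint bounds $p=1$ and $p=\infty$ and interpolate via Riesz--Thorin, but the direct Jensen computation already covers $1\le p<\infty$ in one stroke.)
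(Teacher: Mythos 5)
Your proof is correct and follows essentially the same route as the paper's: Jensen's inequality pointwise in $x$ to get $|K^t\psi|^p\le K^t|\psi|^p$ on $B(\calX)$, then invariance of $\mu$ to integrate, then density of bounded Borel functions in $L^p_\mu(\calX)$. Your extra care at $p=\infty$ — using \eqref{e:set_inv} to show $\rho_t(x,\cdot)$ annihilates $\mu$-null sets for $\mu$-a.e.\ $x$, so that the bound respects the \emph{essential} supremum and the operator is well defined on equivalence classes — is a welcome refinement of the paper's one-line treatment of that endpoint.
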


By $C_b(\calX)$ we denote the set of all bounded continuous functions on $\calX$. As the measure $\mu$ is finite, we have $C_b(\calX)\subset B(\calX)\subset L^p_\mu(\calX)$ for all $p\in [1,\infty]$. In fact, $C_b(\calX)$ is dense in each $L^p_\mu(\calX)$, $p\in [1,\infty)$, see \cite[Theorem 3.14]{rrca}.

\begin{prop}\label{p:semigroup}
$(K^t)_{t\ge 0}$ is a $C_0$-semigroup of contractions in $L^p_\mu(\calX)$ for each $p\in [1,\infty)$.
\end{prop}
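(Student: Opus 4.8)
The plan is to verify the two defining properties of a $C_0$-semigroup of contractions on $L^p_\mu(\calX)$, $p \in [1,\infty)$: the contraction property (already essentially in hand) and strong continuity at $t = 0$. Proposition \ref{p:Kbounded} gives $\|K^t\|_{L^p_\mu \to L^p_\mu} \le 1$ for every $t \ge 0$, so each $K^t$ is a contraction. The semigroup property $K^{s+t} = K^s K^t$ follows from the Chapman--Kolmogorov equation for the transition kernel $\rho_t$ (equivalently, the tower property of conditional expectations): for $\psi \in B(\calX)$ one has $(K^{s+t}\psi)(x) = \bE[\psi(X_{s+t}) \mid X_0 = x] = \bE[\bE[\psi(X_{s+t}) \mid X_s] \mid X_0 = x] = (K^s K^t\psi)(x)$ using the Markov property, and this extends to all of $L^p_\mu(\calX)$ by density and boundedness. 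Together with $K^0 = \Id$ this makes $(K^t)_{t\ge 0}$ a semigroup of contractions; it remains to establish $\|K^t\psi - \psi\|_p \to 0$ as $t \downarrow 0$ for every $\psi \in L^p_\mu(\calX)$.

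For strong continuity I would use a standard density-plus-uniform-boundedness argument. Since $\sup_{t \in [0,1]} \|K^t\|_{L^p_\mu \to L^p_\mu} \le 1$ by Proposition \ref{p:Kbounded}, it suffices to prove $\|K^t\psi - \psi\|_p \to 0$ for $\psi$ ranging over a dense subset of $L^p_\mu(\calX)$. The natural choice is $C_b(\calX)$, which is dense in $L^p_\mu(\calX)$ for $p \in [1,\infty)$ as recalled just before the statement. For $\psi \in C_b(\calX)$, pathwise continuity of the solution $(X_t)_{t\ge 0}$ to \eqref{e:SDE} gives $\psi(X_t) \to \psi(X_0) = \psi(x)$ almost surely as $t \downarrow 0$ under $\bP(\cdot \mid X_0 = x)$, and since $\psi$ is bounded, dominated convergence yields $(K^t\psi)(x) = \bE[\psi(X_t) \mid X_0 = x] \to \psi(x)$ for each $x$. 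Moreover $|K^t\psi(x) - \psi(x)| \le 2\|\psi\|_\infty$, which is in $L^p_\mu(\calX)$ because $\mu$ is finite, so a second application of dominated convergence (now in $x$, against $\mu$) gives $\|K^t\psi - \psi\|_p \to 0$. The $\veps/3$-argument then transfers this to all of $L^p_\mu(\calX)$: given $\psi \in L^p_\mu$ and $\veps > 0$, pick $\phi \in C_b(\calX)$ with $\|\psi - \phi\|_p < \veps/3$, and estimate $\|K^t\psi - \psi\|_p \le \|K^t(\psi - \phi)\|_p + \|K^t\phi - \phi\|_p + \|\phi - \psi\|_p \le 2\|\psi - \phi\|_p + \|K^t\phi - \phi\|_p < \veps$ for $t$ small.

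The main obstacle — really the only non-routine point — is the pointwise convergence $(K^t\psi)(x) \to \psi(x)$ for continuous bounded $\psi$, which relies on the almost-sure path continuity of the SDE solution; this is where the hypotheses on $b$ and $\sigma$ (Lipschitz continuity and linear growth, ensuring the well-posedness from \cite[Theorem 5.2.1]{oe}) enter, since they guarantee a solution with continuous sample paths and, in particular, right-continuity at $t=0$ with $X_0 = x$. Everything else is the standard functional-analytic packaging: uniform boundedness on a compact time interval plus density of a class on which continuity is checked by hand. One subtlety worth a remark is that the conditional-expectation identities hold for $\bP_{X_0}$-a.e.\ $x$, but since we ultimately measure convergence in the $L^p_\mu$-norm and $\mu$ is (by invariance) an admissible initial distribution, working $\mu$-a.e.\ is harmless.
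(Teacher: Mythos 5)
Your argument is correct and coincides with the paper's own proof: strong continuity is reduced by uniform boundedness (Proposition \ref{p:Kbounded}) and the density of $C_b(\calX)$ in $L^p_\mu(\calX)$ to the case of bounded continuous observables, where almost-sure path continuity of the SDE solution plus two applications of dominated convergence give $\|K^t\psi-\psi\|_p\to 0$, followed by the same $\veps/3$-argument. The only difference is that you additionally spell out the semigroup law via the Markov property, which the paper takes for granted.
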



The {\em infinitesimal generator} of the $C_0$-semigroup $(K^t)_{t\ge 0}$ is the (in general unbounded) operator in $L^2_\mu(\calX)$, defined by
\begin{align}\label{e:generator}
\calL\psi = L^2_\mu\text{-}\lim_{t\to 0}\frac{K^t\psi - \psi}t,
\end{align}
whose domain $\dom\calL$ is the set of all $\psi\in L^2_\mu(\calX)$ for which the above limit exists. By Proposition~\ref{p:semigroup} and the Lumer-Phillips theorem (see \cite{lp}), the operator $\calL$ is densely defined, closed\footnote{Recall that a linear operator $T$, defined on a subspace $\dom T$ of a Hilbert space $\calH$, which maps to a Hilbert space $\calK$, is closed if its graph is closed in $\calH\times\calK$.}, dissipative (i.e., $\Re\<\calL\psi,\psi\>_\mu\le 0$ for all $\psi\in\dom\calL$), and its spectrum is contained in the closed left half-plane.

\begin{lem}\label{l:one}
The constant function $\one$ is contained in $\dom\calL$ and $\calL\one = 0$. Moreover, both $M := \linspan\{\one\}$ and $M^\perp$ are invariant under $\calL$ and all $K^t$, $t\ge 0$.
\end{lem}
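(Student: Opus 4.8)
The plan is to verify each claim of Lemma~\ref{l:one} in turn, using the probabilistic definition of $K^t$ together with the invariance of $\mu$ and the $C_0$-semigroup property from Proposition~\ref{p:semigroup}. First I would treat the constant function $\one$: since $\rho_t(x,\cdot)$ is a probability measure for each $x$, we have $(K^t\one)(x) = \int \one\,d\rho_t(x,\cdot) = 1$, so $K^t\one = \one$ for all $t\ge 0$. Hence the difference quotient $(K^t\one - \one)/t$ is identically zero, so its $L^2_\mu$-limit exists and equals $0$; this shows $\one\in\dom\calL$ and $\calL\one = 0$. Consequently $M = \linspan\{\one\}$ is invariant under every $K^t$ and under $\calL$ (trivially, as $\calL$ annihilates it).

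Next I would establish invariance of $M^\perp$ under all $K^t$. The natural tool is the \emph{adjoint semigroup}: invariance of $M$ under $K^t$ is equivalent to invariance of $M^\perp$ under the Hilbert-space adjoint $(K^t)^*$ in $L^2_\mu(\calX)$. So I need to identify $(K^t)^*$, or at least argue that it, too, fixes $\one$. A clean way: for $\psi\in L^2_\mu$ and using \eqref{e:invariance} (extended to $L^1_\mu$ via Proposition~\ref{p:Kbounded}),
\begin{equation*}
\<K^t\psi,\one\>_\mu = \int K^t\psi\,d\mu = \int\psi\,d\mu = \<\psi,\one\>_\mu,
\end{equation*}
which says precisely that $(K^t)^*\one = \one$, i.e.\ $M$ is invariant under $(K^t)^*$ as well. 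Therefore $M^\perp$ is invariant under $K^t = ((K^t)^*)^*$ — equivalently, one checks directly that if $\psi\perp\one$ then $\<K^t\psi,\one\>_\mu = \<\psi,\one\>_\mu = 0$, so $K^t\psi\in M^\perp$.

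Finally, invariance of $M^\perp$ under $\calL$ follows from invariance under the semigroup together with closedness of $\calL$: if $\psi\in\dom\calL\cap M^\perp$, then $(K^t\psi-\psi)/t\in M^\perp$ for every $t>0$ (a difference of elements of the closed subspace $M^\perp$), and since $M^\perp$ is closed in $L^2_\mu$ it contains the limit $\calL\psi$. I do not anticipate a serious obstacle here; the only point requiring a little care is the extension of the invariance identity \eqref{e:invariance} from $B(\calX)$ to $L^1_\mu(\calX)$ (hence in particular to $L^2_\mu\subset L^1_\mu$, as $\mu$ is finite), which is exactly what Proposition~\ref{p:Kbounded} delivers by density of $B(\calX)$ and $\|K^t\|_{L^1_\mu\to L^1_\mu}\le 1$. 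With that in hand every step is a one-line verification.
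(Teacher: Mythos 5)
Your proof is correct and follows essentially the same route as the paper: $K^t\one=\one$ gives $\one\in\dom\calL$ with $\calL\one=0$, the computation $\<K^t\psi,\one\>_\mu=\int K^t\psi\,d\mu=\int\psi\,d\mu=\<\psi,\one\>_\mu$ gives invariance of $M^\perp$ under $K^t$, and invariance under $\calL$ follows from the definition \eqref{e:generator} together with closedness of $M^\perp$. The detour through the adjoint semigroup is unnecessary (as you note yourself, the direct check suffices), but it is harmless.
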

\begin{proof}
It is easy to see that $K^t\one = \one$ for each $t\ge 0$ and hence $\one\in\dom\calL$ with $\calL\one = 0$. Hence $K^tM\subset M$ for all $t\ge 0$ and $\calL M\subset M$. Now, if $\psi\in M^\perp$, then $\<K^t\psi,\one\>_\mu = \int K^t\psi\,d\mu = \int\psi\,d\mu = \<\psi,\one\>_\mu = 0$, which shows that also $K^tM^\perp\subset M^\perp$. The relation $\calL M^\perp\subset M^\perp$ follows from \eqref{e:generator}.
\end{proof}

\subsection{Representation of Koopman Operators on the RKHS}
\label{subsec:kernel_koop}
Using the integral operator $\mathcal{E}$, it is possible to represent the Koopman operator with the aid of a linear operator on $\bH$, which is based on kernel evaluations. This construction mimics the well-known kernel trick used frequently in machine learning. To begin with, for any $x,y\in \calX$ define the rank-one operator $C_{xy} : \bH\to\bH$ by
$$
C_{xy}\psi := \<\psi,\Phi(y)\>\Phi(x) = \psi(y)\Phi(x).
$$
For $t\ge 0$ and $\psi\in\bH$ we further define the cross-covariance operator $C_\bH^t : \bH\to\bH$ by
$$
C_\bH^t\psi := \int\int C_{xy}\psi\,\rho_t(x,dy)\,d\mu(x) = \int (K^t\psi)(x)\Phi(x)\,d\mu(x) = \calE K^t\psi = \calE K^t\calE^*\psi.
$$
Thus, we have
\begin{align}\label{e:CKt}
C_\bH^t = \calE K^t\calE^*.
\end{align}
In other words, the cross-covariance operator $C_\bH^t$ represents the action of the Koopman semigroup through the lens of the RKHS integral operator $\mathcal{E}$ (see~\cite{klus20} for details). Being the product of the two Hilbert-Schmidt operators $\calE K^t$ and $\calE^*$, the operator $C_\bH^t$ is trace class for all $t\ge 0$ (cf.\ \cite[p.\ 521]{k}).

Note that due to $\rho_0(x,\,\cdot\,) = \delta_x$, for $t=0$ this reduces to the already introduced covariance operator
$$
\int\int C_{xy}\,\rho_0(x,dy)\,d\mu(x) = \int C_{xx}\,d\mu(x) = \calE\calE^* = C_\bH.
$$
The identity \eqref{e:CKt} shows that for all $\eta,\psi\in\bH$ we have
\begin{align}\label{e:umrechnen}
\<\eta,C_\bH^t\psi\> = \<\eta,K^t\psi\>_\mu,
\end{align}
which shows that the role of $C^t_\bH$ is analogous to that of the stiffness matrix in a traditional finite-dimensional approximation of the Koopman operator. In this analogy, the covariance operator $C_\bH$ plays the role of the mass matrix.

\subsection{Empirical estimators}\label{subsec:emp}
Recall that the resolvent set $\rho(T)$ of a bounded operator $T$, mapping from a Hilbert space $\calH$ into itself, is the set consisting of all $\la\in\C$ such that $T-\la I$ is boundedly invertible. It is the complement of the spectrum $\sigma(T)$ of $T$.

Next, we introduce empirical estimators for $C^t_\bH$ based on finite data $(x_k, y_k)$, $k=1,\ldots,m$. We consider two sampling scenarios for fixed $t>0$.

\medskip
\noindent{\bf Assumptions on the sampling scheme and the Koopman operator:}
\begin{enumerate}
\item[{\bf (1)}] The $x_k$ are drawn i.i.d.\ from $\mu$, and each $y_k\sim\mu$ is obtained from the conditional distribution $\rho_t(x_k,\cdot)$, i.e., $y_k|(x_k=x)\sim\rho_t(x,\cdot)$ for $\mu$-a.e.\ $x\in \calX$. For example, $y_k$ can be obtained by simulating the SDE~\eqref{e:SDE} starting from $x_k$ until time $t$.
\item[{\bf (2)}] $\mu$ is ergodic and both $x_k$ and $y_k$ are obtained from a single (usually long-term) simulation of the dynamics $X_t$ at discrete integration time step $t > 0$, using a sliding-window estimator, i.e.,
$$
x_0 = X_0\sim\mu,\quad x_k = X_{kt}, \quad\text{and}\quad y_k = X_{(k+1)t}.
$$
In this case, we assume that
\begin{align}\label{e:resolvent}
1\in\rho(K_0^t),    
\end{align}
where $K_0^t$ is the restriction of the Koopman operator $K^t$ to the orthogonal complement $L^2_{\mu,0}(\R^d) := \one^\perp$ of $\linspan\{\one\}$ in $L^2_\mu(\R^d)$.
\end{enumerate}

\begin{rem}
(a) The condition \eqref{e:resolvent} means that $\la=1$ is an isolated simple eigenvalue of $K^t$. It is satisfied if $K^t$ is compact. Then $\sigma(K^t)$ consists of zero and a sequence of eigenvalues converging to zero, and ergodicity ensures that the eigenvalue $\la = 1$ is simple, cf.\ Proposition \ref{p:L_and_ergo}. Another case where \eqref{e:resolvent} holds is when the semigroup $(K_0^t)_{t\ge 0}$ is exponentially stable, i.e., there exist $M\ge 1$ and $\omega>0$ such that $\|K_0^t\|\le Me^{-\omega t}$ for all $t\ge 0$. Then $\|K_0^{nt}\|^{1/n}\le M^{1/n}e^{-\omega t}$, so that the spectral radius $r = \lim_{n\to\infty}\|K_0^{nt}\|^{1/n}$ of $K_0^t$ is at most $e^{-\omega t} < 1$.

\smallskip\noindent
(b) We would like to point out that the condition \eqref{e:resolvent} does not exclude deterministic systems, i.e., autonomous ODEs $\dot x = b(x)$, in which case the operator $K^t$ is unitary on $L^2_\mu(\R^d)$.
\end{rem}

Recall that the joint distribution of two random variables $X$ and $Y$ is given by
$$
dP_{X,Y}(x,y) = dP_{Y|X=x}(y)\cdot dP_X(x).
$$
Set $X=x_k$ and $Y = y_k$. Then, in both cases {\bf (1)} and {\bf (2)}, we have $P_X = \mu$ and
$$
P_{Y|X=x}(B) = P(y_k\in B|x_k=x) = P(X_t\in B|X_0=x) = \rho_t(x,B).
$$
In other words, for the joint distribution $\mu_{0,t}$ of $x_k$ and $y_k$ we have
$$
d\mu_{0,t}(x,y) = d\rho_t(x,\cdot)(y)\cdot d\mu(x) = \rho_t(x,dy)\cdot d\mu(x).
$$
More explicitly,
$$
\mu_{0,t}(A\times B) = \int_A\rho_t(x,B)\,d\mu(x).
$$
Now, since
$$
C_\bH^t = \int\int C_{xy}\,\rho_t(x,dy)\,d\mu(x) = \int C_{xy}\,d\mu_{0,t}(x,y) = \bE\big[C_{x_k,y_k}\big],
$$
for the empirical estimator for $C_\bH^t$ we choose the expression
\begin{align}\label{e:emp_est}
\wh C_\bH^{m,t} = \frac 1m\sum_{k=0}^{m-1}C_{x_k,y_k}.
\end{align}

\section{Variance of the Empirical Estimator}\label{sec:res1}
In case {\bf (1)}, the law of large numbers \cite[Theorem 2.4]{bosq} and, in case {\bf (2)}, ergodicity~\cite{Beck57} ensures the expected behavior
$$
\lim_{m\to\infty}\|\wh C_\bH^{m,t} - C_\bH^t\|_{HS} = 0 \qquad\text{a.s.}
$$
However, this is a purely qualitative result, and nothing is known a priori on the rate of this convergence. The main result of this section, Theorem \ref{t:Eest}, yields probabilistic estimates for the expression $\|\wh C_\bH^{m,t} - C_\bH^t\|_{HS}$, see Proposition \ref{p:prob_est}. Here, our focus is on the estimation from a single ergodic trajectory, i.e., case {\bf (2)} above. While the broader line of reasoning partially resembles that of our previous paper~\cite{nueske23}, we require additional steps due to the infinite-dimensional setting introduced by the RKHS.

\begin{thm}\label{t:Eest}
The Hilbert-Schmidt variance of the empirical estimator can be written as
\begin{align}\label{e:foo}
\bE\big[\|\wh C_\bH^{m,t} - C_\bH^t\|_{HS}^2\big] = \frac 1m\left[\bE_0(t) + 2\sum_{k=1}^{m-1}\tfrac{m-k}{m}\cdot\bE\big[\<C_{x_k,y_k} - C_\bH^t,C_{x_0,y_0}-C_\bH^t\>_{HS}\big]\right],
\end{align}
where
$$
\bE_0(t) := \<K^t\vphi,\vphi\>_\mu - \|C_\bH^t\|_{HS}^2.
$$
In case {\bf (1)}, we have
\[
\bE\big[\|\wh C_\bH^{m,t} - C_\bH^t\|_{HS}^2\big] = \frac 1m\bE_0(t),
\]
whereas in case {\bf (2)},
\begin{align}\label{e:EC}
\bE\big[\|\wh C_\bH^{m,t} - C_\bH^t\|_{HS}^2\big]
&= \frac 1m\Bigg[\bE_0(t) + \sum_{i,j=1}^\infty\<Qg_{ji}^*,F_m(K_0^t)Qg_{ij}\>_\mu\Bigg],
\end{align}
where $Q$ denotes the orthogonal projection onto $L^2_{0,\mu}(\R^d) = \one^\perp$ in $L^2_\mu(\R^d)$,
$$
g_{ij} = f_j\cdot K^tf_i,\qquad g_{ji}^* = f_i\cdot (K^t)^*f_j\quad i,j\in\N,
$$
with the Mercer basis $(f_i)\subset\bH$ \braces{cf.\ Theorem \ref{t:mercer}}, and $F_m : \C\backslash\{1\}\to\C$ is given by
\[
F_m(z) = 2\sum_{k=1}^{m-1}\tfrac{m-k}{m}\cdot z^{k-1} = \frac {2}{1-z}\left(1 - \frac{1-z^m}{m(1-z)}\right),\qquad z\in\C\backslash\{1\}.
\]
\end{thm}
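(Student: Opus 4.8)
The starting point is the bias–variance type decomposition of $\wh C_\bH^{m,t} - C_\bH^t = \frac1m\sum_{k=0}^{m-1}(C_{x_k,y_k} - C_\bH^t)$. Since $\bE[C_{x_k,y_k}] = C_\bH^t$ for every $k$, expanding the squared Hilbert--Schmidt norm gives
\[
\bE\big[\|\wh C_\bH^{m,t}-C_\bH^t\|_{HS}^2\big] = \frac1{m^2}\sum_{k,\ell=0}^{m-1}\bE\big[\<C_{x_k,y_k}-C_\bH^t,\,C_{x_\ell,y_\ell}-C_\bH^t\>_{HS}\big].
\]
In case \textbf{(1)} the samples are i.i.d., so all off-diagonal terms vanish and each diagonal term equals $\bE[\|C_{x_0,y_0}\|_{HS}^2] - \|C_\bH^t\|_{HS}^2$; one then computes $\|C_{xy}\|_{HS}^2 = \|\Phi(x)\|^2\|\Phi(y)\|^2 = \vphi(x)\vphi(y)$, so its expectation under $\mu_{0,t}$ is $\int\vphi(x)(K^t\vphi)(x)\,d\mu(x) = \<K^t\vphi,\vphi\>_\mu$, yielding $\bE_0(t)$. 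In case \textbf{(2)}, stationarity of the trajectory makes $\bE[\<C_{x_k,y_k}-C_\bH^t, C_{x_\ell,y_\ell}-C_\bH^t\>_{HS}]$ depend only on $|k-\ell|$; collecting terms by lag $k = |k-\ell|$ and using the standard count of pairs at a given lag produces the factor $\tfrac{m-k}{m}$ and hence the claimed formula \eqref{e:foo}, with the $k=0$ term again being $\bE_0(t)$.

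The substantive part is re-expressing the lagged correlation terms in \eqref{e:foo} as the spectral expression in \eqref{e:EC}. The plan is to expand each rank-one operator in the Mercer basis $(f_i)$ of $\bH$: writing $\Phi(x) = \sum_i f_i(x)f_i$ (Mercer's theorem), one gets $C_{xy} = \sum_{i,j} f_i(x)f_j(y)\,\<\,\cdot\,,f_j\>f_i$, so that
\[
\<C_{x,y},C_{x',y'}\>_{HS} = \sum_{i,j} f_i(x)f_j(y)f_i(x')f_j(y'),
\]
the reordering being justified by absolute convergence from condition (A1). Taking expectation over the stationary pair at lag $k$ and using that $y_k | x_k \sim \rho_t(x_k,\cdot)$ together with the tower property (conditioning on $x_0, x_k$) turns products like $\bE[f_j(y_k)\,|\,x_k]$ into $(K^t f_j)(x_k)$ and $\bE[f_j(y_0)\,|\,x_0]$ into $(K^tf_j)(x_0)$. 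What survives is $\bE[\,g_{ij}(x_0)\,\overline{(\text{propagated }g)}(x_k)\,]$ where $g_{ij} = f_j\cdot K^tf_i$; one more application of the Markov property over the $k$ steps between $x_0$ and $x_k$ produces a factor $(K^t)^{k-1}$ acting on the appropriate function, which after pairing against the centered terms becomes $g_{ji}^* = f_i\cdot(K^t)^*f_j$. Summing the resulting geometric-type series $2\sum_{k\ge1}\tfrac{m-k}{m}(K_0^t)^{k-1}$ — where the projection $Q$ onto $\one^\perp$ appears because the centered observables $C_{x_k,y_k}-C_\bH^t$ kill the constant component, cf.\ Lemma \ref{l:one} — gives the operator $F_m(K_0^t)$, and the assumption $1\in\rho(K_0^t)$ is exactly what makes the closed form $\frac{2}{1-z}(1-\frac{1-z^m}{m(1-z)})$ well-defined on $\sigma(K_0^t)$.

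I expect the main obstacle to be the careful bookkeeping in the infinite-dimensional setting: justifying all interchanges of the double (indeed, effectively quadruple) sum over the Mercer indices with the expectations and with the operator $F_m(K_0^t)$, controlling the Bochner integrals defining $\calE$ and the trace-class/Hilbert--Schmidt properties uniformly, and correctly tracking where the projection $Q$ enters so that one may legitimately replace $K^t$ by its restriction $K_0^t$ (which is what makes $F_m$ applicable via the holomorphic functional calculus on the set $\sigma(K_0^t)\subset\C\setminus\{1\}$). The algebraic identity for $F_m(z)$ itself is a routine finite geometric-series manipulation. Once the interchanges are licensed — using (A1), Lemma \ref{l:ECH}, and the contraction property $\|K^t\|\le 1$ from Proposition \ref{p:Kbounded} to dominate everything — the identity \eqref{e:EC} follows by assembling the pieces.
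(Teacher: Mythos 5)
Your plan follows the paper's proof essentially step for step: the same expansion of the squared Hilbert--Schmidt norm with the stationarity/lag argument giving \eqref{e:foo}, the same computation of $\bE_0(t)$ via $\|C_{xy}\|_{HS}^2=\vphi(x)\vphi(y)$, the same Mercer-basis expansion of $\<C_{x,y},C_{x',y'}\>_{HS}$ combined with the Markov property to produce $g_{ij}$, $g_{ji}^*$ and the propagator $K^{(k-1)t}$, and the same cancellation of the constant component (the $P$-part, whose contribution equals $\|C_\bH^t\|_{HS}^2$) against the centering term so that only $Q g_{ij}$, $Qg_{ji}^*$ and $K_0^t$ survive before summing to $F_m(K_0^t)$. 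The technical obstacles you flag (interchanging the Mercer sums with the expectations, and justifying the adjoint manipulation) are exactly the ones the paper handles via the bounds \eqref{e:gijL2}, \eqref{e:gij*L2} and Lemma \ref{l:geht}, so the proposal is correct and takes the same route.
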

\begin{proof}
Let us prove \eqref{e:foo}. First of all, we set $z_k = (x_k,y_k)$, $k=1,\ldots,m$. Then
\begin{align*}
\bE\big[\|\wh C_\bH^{m,t} - C_\bH^t\|_{HS}^2\big]
&= \bE\Big[\Big\|\frac 1m\sum_{k=0}^{m-1}(C_{z_k}-C_\bH^t)\Big\|_{HS}^2\Big] = \bE\Big[ \frac 1{m^2}\sum_{k,\ell=0}^{m-1}\big\<C_{z_k} - C_\bH^t,C_{z_\ell} - C_\bH^t\big\>_{HS} \Big]\\
&= \bE\left[ \frac 1{m^2}\sum_{k=0}^{m-1}\|C_{z_k} - C_\bH^t\|_{HS}^2 + \frac 2{m^2}\sum_{k=0}^{m-1}\sum_{\ell=k+1}^{m-1}\big\<C_{z_k} - C_\bH^t,C_{z_\ell} - C_\bH^t\big\>_{HS} \right]\\
&= \frac 1m\bE\big[\|C_{z_0} - C_\bH^t\|_{HS}^2\big] + \frac 2{m^2}\sum_{k=1}^{m-1}(m-k)\bE\big[\<C_{z_k} - C_\bH^t,C_{z_0}-C_\bH^t\>_{HS}\big],
\end{align*}
where we exploited that $\bE[\<C_{z_k} - C_\bH^t,C_{z_\ell} - C_\bH^t\>_{HS}]$ only depends on the difference $\ell-k$.

Let us compute the first term. Since $\bE[C_{z_0}] = C_\bH^t$ and thus $\bE[\<C_{z_0},C_\bH^t\>_{HS}] = \|C_\bH^t\|_{HS}^2$,
$$
\bE\big[\|C_{z_0} - C_\bH^t\|_{HS}^2\big] = \bE\big[\|C_{z_0}\|_{HS}^2\big] - \|C_\bH^t\|_{HS}^2.
$$
For $\psi\in\bH$ we have
$$
\|C_{z_0}\psi\|^2 = \|\psi(y_0)\Phi(x_0)\|^2 = \psi(y_0)^2\vphi(x_0).
$$
As $\vphi(x) = k(x,x) = \sum_i f_i(x)^2$, we obtain
\begin{align*}
\bE\big[\|C_{z_0}\|_{HS}^2\big]
&= \bE\Big[\sum_i\|C_{z_0}f_i\|^2\Big] = \bE\Big[\sum_if_i(y_0)^2\vphi(x_0)\Big] = \bE[\vphi(x_0)\vphi(y_0)]\\
&= \int\int\vphi(x)\vphi(y)\,\rho_t(x,dy)\,d\mu(x) = \int\vphi(x)(K^t\vphi)(x)\,d\mu(x) = \<K^t\vphi,\vphi\>_\mu.
\end{align*}
Therefore,
$$
\bE\big[\|C_{z_0} - C_\bH^t\|_{HS}^2\big] = \<K^t\vphi,\vphi\>_\mu - \|C_\bH^t\|_{HS}^2 = E_0(t),
$$
and thus \eqref{e:foo} follows.

\bigskip
\noindent{\bf Case (1).} Since $z_k$ and $z_\ell$ are independent for $k\neq\ell$, we have $\bE\big[\<C_{z_k} - C_\bH^t,C_{z_\ell}- C_\bH^t\>_{HS}\big] = 0$. Hence, the statement of the theorem for case {\bf (1)} follows.

\bigskip
\noindent{\bf Case (2).} First of all, note that $g_{ij}\in L^2_\mu$ as
\begin{align}\label{e:gijL2}
\sum_{i,j}\int|g_{ij}|^2\,d\mu
&= \sum_{i,j}\int f_j ^2(K^tf_i)^2\,d\mu\,\le\,\sum_{i,j}\int f_j^2\cdot K^t[f_i^2]\,d\mu = \int\vphi\cdot K^t\vphi\,d\mu = \<K^t\vphi,\vphi\>_\mu.
\end{align}
As also $(K^t)^*$ enjoys the property $[(K^t)^*f]^2\le (K^t)^*f^2$ (see \ref{a:perron}), we similarly have
\begin{align}\label{e:gij*L2}
\sum_{i,j}\|g_{ji}^*\|_\mu^2\le\<K^t\vphi,\vphi\>_\mu.
\end{align}
For the cross terms, we compute
\begin{align*}
\bE\big[\<C_{z_k} - C_\bH^t,C_{z_0}-C_\bH^t\>_{HS}\big] + \|C_\bH^t\|_{HS}^2
&=\bE[\<C_{z_k},C_{z_0}\>_{HS}] = \bE\Big[\sum_i\<C_{z_k}f_i,C_{z_0}f_i\>\Big] \\
&= \bE\Big[\Big(\sum_if_i(y_k)f_i(y_0)\Big)k(x_k,x_0)\Big]\\
&= \bE\Big[\sum_{i,j}f_i(y_k)f_i(y_0)f_j(x_k)f_j(x_0)\Big].
\end{align*}
The last term can be expressed as
\begin{align*}
\int &\int\int\int \sum_{i,j}f_j(x)f_i(y)f_j(x')f_i(y')\,\rho_t(x',dy')\,\rho_{(k-1)t}(y,dx')\,\rho_t(x,dy)\,d\mu(x)\\
&= \sum_{i,j}\int\int\int f_j(x)f_i(y)f_j(x')(K^tf_i)(x')\,\rho_{(k-1)t}(y,dx')\,\rho_t(x,dy)\,d\mu(x)\\
&= \sum_{i,j}\int\int f_j(x)f_i(y)(K^{(k-1)t}g_{ij})(y)\,\rho_{t}(x,dy)\,\,d\mu(x) = \sum_{i,j}\int f_j(x)\big(K^t(f_i\cdot K^{(k-1)t}g_{ij})\big)(x)\,d\mu(x)\\
&\overset{(*)}{=} \sum_{i,j}\big\<f_i(K^t)^*f_j,K^{(k-1)t}g_{ij}\big\>_\mu = \sum_{i,j}\big\<g_{ji}^*,K^{(k-1)t}g_{ij}\big\>_\mu.
\end{align*}
For a justification of $(*)$ see Lemma \ref{l:geht}.

Let $P$ be the orthogonal projection in $L^2_\mu(\R^d)$ onto $\linspan\{\one\}$, i.e., $P = I-Q$. Then, by Lemma \ref{l:one},
\[
\bE\big[\<C_{z_k} - C_\bH^t,C_{z_0}-C_\bH^t\>_{HS}\big] + \|C_\bH^t\|_{HS}^2 = \sum_{i,j}\big\<Qg_{ji}^*,K_0^{(k-1)t}Qg_{ij}\big\>_\mu + \sum_{i,j}\big\<Pg_{ji}^*,K^{(k-1)t}Pg_{ij}\big\>_\mu.
\]
For $f\in L^2_\mu(\R^d)$ we have $Pf = \<f,\one\>_\mu\one$, so (since $\int g_{ji}^*\,d\mu = \int g_{ij}\,d\mu$)
\begin{align*}
\sum_{i,j}\big\<Pg_{ji}^*,K^{(k-1)t}Pg_{ij}\big\>_\mu
&= \sum_{i,j}\big\<\<g_{ji}^*,\one\>_\mu\one,\<g_{ij},\one\>_\mu K^{(k-1)t}\one\big\>_\mu = \sum_{i,j}|\<g_{ij},\one\>_\mu|^2 = \sum_{i,j}\left|\int f_j\cdot K^tf_i\,d\mu\right|^2\\
&= \sum_{i,j}\int\int f_j(x)(K^tf_i)(x)f_j(x')(K^tf_i)(x')\,d\mu(x)\,d\mu(x')\\
&= \sum_i\int\int k(x,x')(K^tf_i)(x)(K^tf_i)(x')\,d\mu(x)\,d\mu(x')\\
&= \sum_i\int\int\int\int \big\<f_i(y)\Phi(x),f_i(y')\Phi(x')\big\>\,\rho_t(x,dy)\,\rho_t(x',dy')\,d\mu(x)\,d\mu(x')\\
&= \sum_i\Big\|\int f_i(y)\Phi(x)\,d\mu_{0,t}(x,y)\Big\|^2 = \sum_i\|C_\bH^tf_i\|^2 = \|C_\bH^t\|_{HS}^2.
\end{align*}
At this point, we would like to remark for later use that in a similar way we get
\begin{align}\label{e:Pgs}
\sum_{i,j}\|Pg_{ij}\|_\mu^2 = \sum_{i,j}\|Pg_{ji}^*\|_\mu^2 = \|C_\bH^t\|_{HS}^2.
\end{align}
We have thus shown that
\[
\bE\big[\<C_{z_k} - C_\bH^t,C_{z_0}-C_\bH^t\>_{HS}\big] = \sum_{i,j}\big\<Qg_{ji}^*,K_0^{(k-1)t}Qg_{ij}\big\>_\mu,
\]
which concludes the proof.
\end{proof}

\begin{rem}\label{e:self-adjoint}
Let us compute the variance in the case, where the generator $\calL$ is self-adjoint with discrete spectrum. Then $\calL = \sum_{\ell=0}^\infty\mu_\ell\<\cdot,\psi_\ell\>_\mu\psi_\ell$ with eigenvalues $\mu_\ell\le 0$ and eigenfunctions $\psi_\ell$. We let $\mu_0 = 0$ and $\psi_0 = \one$. Then, setting $q_\ell = e^{\mu_\ell t}$, we get $K_0^t = \sum_{\ell=1}^\infty q_\ell\<\cdot,\psi_\ell\>_\mu\psi_\ell$ and thus
\[
F_m(K_0^t) = \sum_{\ell=1}^\infty F_m(q_\ell)\<\cdot,\psi_\ell\>_\mu\psi_\ell.
\]
It is now easy to see that (note that $g_{ji}^* = g_{ji}$ in this case)
\begin{align}\label{e:dseries}
\sum_{i,j}\<Qg_{ji},F_m(K_0^t)Qg_{ij}\>_\mu = \sum_{\ell=1}^\infty d_{\ell,t}\cdot F_m(q_\ell),
\end{align}
where $d_{\ell,t} = \sum_{i,j}\<g_{ij},\psi_\ell\>_\mu\<g_{ji},\psi_\ell\>_\mu$.
\end{rem}

In the following, we let
\[
\sigma_m^2 := \bE_0(t) + \sum_{i,j=1}^\infty\<Qg_{ji}^*,F_m(K_0^t)Qg_{ij}\>_\mu
\qquad\text{and}\qquad
\sigma_\infty^2 := \bE_0(t) + 2\sum_{i,j=1}^\infty\<Qg_{ji}^*,(I - K_0^t)^{-1}Qg_{ij}\>_\mu.
\]
Then
$$
\bE\big[\|\wh C_\bH^{m,t} - C_\bH^t\|_{HS}^2\big] = \frac{\sigma_m^2}{m}
$$
and $\sigma_m^2\to\sigma_\infty^2$ as $m\to\infty$. We can therefore interpret $\sigma^2_\infty$ as \emph{asymptotic variance} of the estimator $\wh{C}^{m,t}_\bH$, similar to our previous results in~\cite[Lemma~6]{nueske23}.

An upper bound on the variance can be obtained as follows.

\begin{cor}\label{c:error_bound_compact}
In case {\bf (2)}, for all $m\in\N$ we have
\begin{equation}\label{e:ECcor}
\sigma_m^2\,\le\,\bE_0(t)\left[1 + \|F_m(K_0^t)\|\right]\,\le\,8\bE_0(t) \|(I-K_0^t)^{-2}\|.
\end{equation}
\end{cor}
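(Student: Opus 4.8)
The plan is to bound the series term $\sum_{i,j}\langle Qg_{ji}^*, F_m(K_0^t)Qg_{ij}\rangle_\mu$ using the operator $F_m(K_0^t)$ via Cauchy–Schwarz, and then to control both $\sum_{i,j}\|Qg_{ij}\|_\mu^2$ and $\|F_m(K_0^t)\|$. For the first inequality in \eqref{e:ECcor}: apply Cauchy–Schwarz in $L^2_\mu$ together with the operator-norm bound $\|F_m(K_0^t)Qg_{ij}\|_\mu\le\|F_m(K_0^t)\|\cdot\|Qg_{ij}\|_\mu$ to get
\[
\Big|\sum_{i,j}\langle Qg_{ji}^*, F_m(K_0^t)Qg_{ij}\rangle_\mu\Big|
\le \|F_m(K_0^t)\|\cdot\Big(\sum_{i,j}\|Qg_{ji}^*\|_\mu^2\Big)^{1/2}\Big(\sum_{i,j}\|Qg_{ij}\|_\mu^2\Big)^{1/2}.
\]
Now $Q$ is a projection so $\|Qg_{ij}\|_\mu\le\|g_{ij}\|_\mu$, and by \eqref{e:gijL2} and \eqref{e:gij*L2} both sums are bounded by $\langle K^t\vphi,\vphi\rangle_\mu$. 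Using the Pythagorean identity $\|g_{ij}\|_\mu^2=\|Qg_{ij}\|_\mu^2+\|Pg_{ij}\|_\mu^2$ together with \eqref{e:Pgs}, one gets the sharper value $\sum_{i,j}\|Qg_{ij}\|_\mu^2 = \langle K^t\vphi,\vphi\rangle_\mu - \|C_\bH^t\|_{HS}^2 = \bE_0(t)$, and likewise $\sum_{i,j}\|Qg_{ji}^*\|_\mu^2 = \bE_0(t)$ (noting $\|g_{ji}^*\|_\mu^2\le\langle K^t\vphi,\vphi\rangle_\mu$ follows analogously and $Pg_{ji}^*$ contributes $\|C_\bH^t\|_{HS}^2$). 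Hence the series term is at most $\bE_0(t)\|F_m(K_0^t)\|$, and adding the leading $\bE_0(t)$ gives $\sigma_m^2\le\bE_0(t)(1+\|F_m(K_0^t)\|)$.

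For the second inequality, estimate $\|F_m(K_0^t)\|$ directly from the functional-calculus expression. Writing $F_m(z)=\frac{2}{1-z}\big(1-\frac{1-z^m}{m(1-z)}\big)$, one has on $\sigma(K_0^t)$ the bound $|1-z^m|\le 1+|z|^m\le 2$ (since $\|K_0^t\|\le 1$ by Proposition \ref{p:Kbounded} and Lemma \ref{l:one}, so $|z|\le 1$), which is crude but suffices: then $|F_m(z)|\le \frac{2}{|1-z|}\big(1+\frac{2}{m|1-z|}\big)$. A cleaner route is to bound each term: since $\|K_0^t\|\le1$, $\|F_m(K_0^t)\|\le 2\sum_{k=1}^{m-1}\tfrac{m-k}{m}\|(K_0^t)^{k-1}\|\le 2\sum_{k=1}^{m-1}\tfrac{m-k}{m}=\tfrac{m-1}{1}\cdot\frac{?}{}$ — but this only gives an $O(m)$ bound, not the claimed $m$-independent one, so this naive route fails and we genuinely need the resolvent. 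Instead, use $1\in\rho(K_0^t)$ to write $F_m(K_0^t)=2(I-K_0^t)^{-1}\big(I-\tfrac{1}{m}(I-K_0^t)^{-1}(I-(K_0^t)^m)\big)$, and bound the operator norm by $2\|(I-K_0^t)^{-1}\|\big(1+\tfrac{1}{m}\|(I-K_0^t)^{-1}\|\cdot(1+\|K_0^t\|^m)\big)\le 2\|(I-K_0^t)^{-1}\|\big(1+\tfrac{2}{m}\|(I-K_0^t)^{-1}\|\big)$; for $m\ge 2$ (and with the trivial $m=1$ case $F_1\equiv 0$) this is at most $2\|(I-K_0^t)^{-1}\|+2\|(I-K_0^t)^{-1}\|^2\le 4\|(I-K_0^t)^{-1}\|^2$ whenever $\|(I-K_0^t)^{-1}\|\ge1$ (which always holds since $\|I-K_0^t\|\le2$ forces $\|(I-K_0^t)^{-1}\|\ge1/2$; a short separate argument upgrades this, or one absorbs the constant). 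Thus $1+\|F_m(K_0^t)\|\le 1+4\|(I-K_0^t)^{-1}\|^2\le 8\|(I-K_0^t)^{-1}\|^2 = 8\|(I-K_0^t)^{-2}\|$, the last equality because $(I-K_0^t)^{-1}$ is a bounded operator whose squared norm equals the norm of its square when... actually $\|(I-K_0^t)^{-2}\|\le\|(I-K_0^t)^{-1}\|^2$ always, and here we want the reverse bound, so I would instead state the final bound as $8\bE_0(t)\|(I-K_0^t)^{-1}\|^2$ and invoke $\|(I-K_0^t)^{-1}\|^2=\|(I-K_0^t)^{-2}\|$ only if the operator is self-adjoint, or simply replace $\|(I-K_0^t)^{-2}\|$ in the statement by the square of the resolvent norm.

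The main obstacle is the bookkeeping of constants in the second inequality: the naive term-by-term bound on $F_m(K_0^t)$ diverges linearly in $m$, so one must exploit the closed-form resolvent expression for $F_m$ together with $1\in\rho(K_0^t)$ to extract an $m$-independent bound, and then track the numerical constants carefully to land exactly on the factor $8$. The telescoping/Cauchy–Schwarz steps for the first inequality are routine; the identification $\sum_{i,j}\|Qg_{ij}\|_\mu^2=\bE_0(t)$ via \eqref{e:gijL2}, \eqref{e:Pgs} and the Pythagorean decomposition is the one place where the earlier computations in the proof of Theorem \ref{t:Eest} are reused essentially.
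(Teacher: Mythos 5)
Your treatment of the first inequality follows the paper's proof essentially verbatim: Cauchy--Schwarz, pulling out $\|F_m(K_0^t)\|$, and then the reduction $\|Qg_{ij}\|_\mu^2 = \|g_{ij}\|_\mu^2 - \|Pg_{ij}\|_\mu^2$ combined with \eqref{e:gijL2}, \eqref{e:gij*L2} and \eqref{e:Pgs}. One small correction: \eqref{e:gijL2} is an inequality (it comes from $ (K^tf_i)^2\le K^t[f_i^2]$), so you only obtain $\sum_{i,j}\|Qg_{ij}\|_\mu^2 \le \bE_0(t)$, not the ``sharper value'' with equality --- but the inequality is all that is needed, so this does not affect the argument.

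The second inequality is where you have a genuine gap, and you correctly diagnose it yourself: your route produces $\|(I-K_0^t)^{-1}\|^2$, and since $K_0^t$ need not be normal, the inequality $\|(I-K_0^t)^{-2}\|\le\|(I-K_0^t)^{-1}\|^2$ goes the wrong way for your purposes, so you end up proving a weaker statement and propose to alter the corollary. The missing idea is to keep the two resolvent factors together: all operators in sight are functions of $T:=K_0^t$ and hence commute, so one may write
\[
F_m(T) \;=\; 2(I-T)^{-1}\Bigl(I - \tfrac 1m(I-T)^{-1}(I-T^m)\Bigr) \;=\; 2(I-T)^{-2}\Bigl((1-\tfrac 1m)I - T + \tfrac 1m T^m\Bigr),
\]
so that $(I-T)^{-2}$ appears as a single operator whose norm is exactly the quantity in the statement. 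Bounding the polynomial factor by $(1-\tfrac 1m)+1+\tfrac 1m = 2$ via the triangle inequality and $\|T\|\le 1$ gives $\|F_m(T)\|\le 4\|(I-T)^{-2}\|$ with no case distinction on $m$ and no lower bound on the resolvent norm. The leading ``$1$'' in $1+\|F_m(K_0^t)\|$ is then absorbed by $1 = \|(I-T)^2(I-T)^{-2}\|\le\|I-T\|^2\|(I-T)^{-2}\|\le 4\|(I-T)^{-2}\|$, which lands exactly on the factor $8$. This also disposes of your secondary worry about whether $\|(I-K_0^t)^{-1}\|\ge 1$: that estimate is neither true in general (only $\ge 1/2$ follows from $\|I-T\|\le 2$) nor needed.
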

\begin{proof}
We set $P := I-Q$ and estimate (cf.\ \eqref{e:gijL2}, \eqref{e:gij*L2}, and \eqref{e:Pgs})
\begin{align*}
\sum_{i,j=1}^\infty\<Qg_{ji}^*,F_m(K_0^t)Qg_{ij}\>_\mu
&\le\Big(\sum_{i,j}\|Qg_{ji}^*\|_\mu^2\Big)^{1/2}\Big(\sum_{i,j}\|F_m(K_0^t)\|^2\|Qg_{ij}\|_\mu^2\Big)^{1/2}\\
&\le \|F_m(K_0^t)\|\Big(\sum_{i,j}\big[\|g_{ji}^*\|_\mu^2 - \|Pg_{ji}^*\|_\mu^2\big]\Big)^{1/2}\Big(\sum_{i,j}\big[\|g_{ij}\|_\mu^2 - \|Pg_{ij}\|_\mu^2\big]\Big)^{1/2}\\
&\le \|F_m(K_0^t)\|\big(\<K^t\vphi,\vphi\>_\mu - \|C_\bH^t\|_{HS}^2\big) = \|F_m(K_0^t)\|\cdot E_0(t).
\end{align*}
This proves the first estimate. For the second one, we observe that for $T := K_0^t$ we have
\begin{align*}
F_m(T) = 2(I-T)^{-1}\big(I - \tfrac 1m(I-T)^{-1}(I-T^m)\big) = 2(I-T)^{-2}\big((1-\tfrac 1m)I - T + \tfrac 1mT^m\big).
\end{align*}
Making use of $\|T\|\le 1$, we obtain
\[
\|F_m(T)\|\,\le\,2\|(I-T)^{-2}\|\cdot(1-\tfrac 1m + 1 + \tfrac 1m) = 4\|(I-T)^{-2}\|.
\]
Moreover, $\|I-T\|\le 2$, so that
\[
1 = \|(I-T)^2(I-T)^{-2}\|\le\|I-T\|^2\|(I-T)^{-2}\|\le 4\|(I-T)^{-2}\|,
\]
and the corollary is proved.
\end{proof}

\begin{rem}
\label{rem:simple_bound_sigm}
If the semigroup $(K_0^t)_{t\ge 0}$ is exponentially stable with $M\ge 1$ and $\omega>0$, then, setting $q = e^{-\omega t} < 1$, we have
\begin{align*}
\|F_m(K_0^t)\|
&= 2\Bigg\|\sum_{k=1}^{m-1}\tfrac{m-k}{m}\cdot K_0^{(k-1)t}\Bigg\|\le 2M\sum_{k=1}^{m-1}\tfrac{m-k}{m}\cdot q^{k-1} = MF_m(q)\\
&= \frac{2M}{1-q}\left(1 - \frac{1-q^m}{m(1-q)}\right)\,\le\,\frac {2M}{1-q}.
\end{align*}
Especially, if $M=1$, we obtain $1 + \|F_m(K_0^t)\|\le\frac{3-q}{1-q}$ and thus $\sigma_m^2\le\frac{3-q}{1-q}\cdot\bE_0(t)$.
\end{rem}

\begin{prop}\label{p:prob_est}
We have the following probabilistic bound on the estimation error:
\begin{numcases}
{\bP\big(\|C_\bH^t - \wh C_\bH^{m,t}\|_{HS} > \veps\big)
\le}
\frac{\sigma^2_m}{m\veps^2}, \qquad &\text{in case {\bf(2)},}\label{e:probC}
\\
\frac{E_0(t)}{m\veps^2}, \qquad &\text{in case {\bf(1)},}\label{e:probCiidM}
\\
2 \,e^{-\frac{m\veps^2}{8\|k\|_\infty^2}}, \qquad &\text{in case {\bf(1)} with bounded kernel.}\label{e:probCiidH}
\end{numcases}
In particular, the above also holds upon replacing the left-hand side by $\bP\big(\|\calE K^t\psi - \wh C_\bH^{m,t}\psi\| > \veps\big)$ for $\psi\in\bH$, $\|\psi\|=1$.
\end{prop}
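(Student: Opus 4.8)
The plan is to derive the first two bounds from the second-moment control of the estimator provided by Theorem~\ref{t:Eest} via Markov's inequality, the third from a Hilbert--Schmidt-valued Hoeffding inequality in the bounded-kernel case, and then to transfer everything to the pointwise statement by a trivial norm estimate. For \eqref{e:probC} and \eqref{e:probCiidM} I would apply Markov's inequality to the nonnegative random variable $\|C_\bH^t - \wh C_\bH^{m,t}\|_{HS}^2$, obtaining
\[
\bP\big(\|C_\bH^t - \wh C_\bH^{m,t}\|_{HS} > \veps\big) \le \veps^{-2}\,\bE\big[\|C_\bH^t - \wh C_\bH^{m,t}\|_{HS}^2\big],
\]
where by \eqref{e:EC} and the definition of $\sigma_m^2$ the right-hand side equals $\sigma_m^2/(m\veps^2)$ in case~{\bf(2)}, and by the case-{\bf(1)} identity of Theorem~\ref{t:Eest} it equals $\bE_0(t)/(m\veps^2)$ in case~{\bf(1)}.

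For \eqref{e:probCiidH} I would use that, in case~{\bf(1)}, the operators $C_{x_k,y_k}$ ($k=0,\dots,m-1$) are i.i.d.\ random elements of the separable Hilbert space of Hilbert--Schmidt operators on $\bH$ with common mean $\bE[C_{x_k,y_k}] = C_\bH^t$, and that the computation in the proof of Theorem~\ref{t:Eest} gives $\|C_{x_k,y_k}\|_{HS}^2 = \vphi(x_k)\vphi(y_k)\le\|k\|_\infty^2$ almost surely; hence $\|C_\bH^t\|_{HS}\le\|k\|_\infty$ and therefore $\|C_{x_k,y_k} - C_\bH^t\|_{HS}\le 2\|k\|_\infty$ a.s. Writing $\wh C_\bH^{m,t} - C_\bH^t = \tfrac 1m\sum_{k=0}^{m-1}(C_{x_k,y_k} - C_\bH^t)$ as a normalized sum of independent, centered, uniformly bounded elements of a Hilbert space, I would invoke a Hoeffding/Azuma-type concentration inequality for Hilbert-space-valued random variables (see, e.g., \cite{mollenhauer22}); since each summand has norm at most $2\|k\|_\infty/m$, this yields a bound of the form $2\exp\big(-\veps^2/(2\sum_{k=0}^{m-1}(2\|k\|_\infty/m)^2)\big) = 2\,e^{-m\veps^2/(8\|k\|_\infty^2)}$.

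Finally, for the pointwise claim, fix $\psi\in\bH$ with $\|\psi\|=1$; since $\calE^*\psi=\psi$ by \eqref{e:embedding}, we have $C_\bH^t\psi = \calE K^t\calE^*\psi = \calE K^t\psi$, and, the operator norm being dominated by the Hilbert--Schmidt norm,
\[
\|\calE K^t\psi - \wh C_\bH^{m,t}\psi\| = \|(C_\bH^t - \wh C_\bH^{m,t})\psi\| \le \|C_\bH^t - \wh C_\bH^{m,t}\|_{HS},
\]
so the event on the left-hand side is contained in $\{\|C_\bH^t - \wh C_\bH^{m,t}\|_{HS} > \veps\}$ and all three bounds carry over verbatim. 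The only non-routine point is the Hilbert-space Hoeffding inequality and the correct tracking of its constant in the bounded-kernel case; I expect this to be the main obstacle, with everything else reducing to Markov's inequality and the norm estimates already established for Theorem~\ref{t:Eest}.
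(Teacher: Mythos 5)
Your proposal is correct and follows essentially the same route as the paper: Markov's inequality applied to $\|C_\bH^t-\wh C_\bH^{m,t}\|_{HS}^2$ for the first two bounds, a Hilbert-space-valued Hoeffding inequality (the paper cites Pinelis) with the same almost-sure bound $\|C_{z_k}-C_\bH^t\|_{HS}\le 2\|k\|_\infty$ and the same constant $8\|k\|_\infty^2$ for the third, and the domination of the operator norm by the Hilbert--Schmidt norm for the pointwise claim.
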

\begin{proof}
The inequalities \eqref{e:probC} and \eqref{e:probCiidM} are an immediate consequence of Markov's inequality, applied to the random variable $\|C_\bH^t - \wh C_\bH^{m,t}\|_{HS}^2$. Towards the inequality \eqref{e:probCiidH}, we observe that (cf.\ the proof of Theorem \ref{t:Eest})
\[
\|C_\bH^t\|_{HS}^2 = \int\int\int\int k(x,x')k(y,y')\,\rho_t(x,dy)\,\rho_t(x',dy')\,d\mu(x)\,d\mu(x')\,\le\,\|k\|_\infty^2.
\]
and $\|C_{xy}\|_{HS}^2 = \vphi(x)\vphi(y)$ for $x,y\in\R^d$. Hence, \eqref{e:probCiidH} follows from $C_\bH^t - \wh C_\bH^{m,t} = \frac 1m\sum_{k=0}^{m-1}(C_\bH^t - C_{z_k})$, Hoeffding's inequality for Hilbert space-valued random variables \cite[Theorem 3.5]{pi} (see also \cite[Theorem A.5.2]{m}), and
\[
\|C_\bH^t - C_{xy}\|_{HS}\le \|C_\bH^t\|_{HS} + \|C_{xy}\|_{HS}\,\le\, 2\|k\|_\infty.
\]
The estimate
\begin{align*}
\|\calE K^t\psi - \wh C_\bH^{m,t}\psi\|
&= \|\calE K^t\calE^*\psi - \wh C_\bH^{m,t}\psi\| = \|(C_\bH^t - \wh C_\bH^{m,t})\psi\|\le\|C_\bH^t - \wh C_\bH^{m,t}\|_{HS}
\end{align*}
finally yields the last claim.
\end{proof}

\begin{rem}
Under additional assumptions (boundedness of the kernel, mixing, etc.), other concentration inequalities than Markov's, such as, e.g., \cite[Theorem 2.12]{bosq} ($\alpha$-mixing) or \cite[Th\'eor\`eme 3.1]{rhomari} ($\beta$-mixing), might lead to better estimates than \eqref{e:probC}.
\end{rem}

\section{Bound on the Koopman prediction error}
\label{sec:res2}

The kernel cross-covariance operator $C_\bH^t$ can also be used to approximate the predictive capabilities of the Koopman operator, for observables in $\bH$. Approximating the full Koopman operator involves the inverse of the co-variance operator, which becomes an unbounded operator on a dense domain of definition in the infinite-dimensional RKHS case. Moreover, its empirical estimator $\wh C_\bH^m$ is finite-rank and thus not even injective. While Fukumizu et al.\ tackle this problem in \cite{fsg} by means of a regularization procedure, we choose to use pseudo-inverses instead (cf.\ Remark \ref{r:pseudo}). We truncate the action of the Koopman operator using $N$~terms of the Mercer series expansion and derive a bound for the prediction error for fixed truncation parameter~$N$. While we use similar ideas as presented in~\cite{gia19}, we heavily rely on our new results on the cross-covariance operator, cf.~Section~\ref{sec:res1}. Afterwards, we deal with 
the case of Koopman-invariance of the RKHS~\cite{kss}. Here, 
we establish 
an estimate for the truncation error, which then yields a bound on the deviation from the full Koopman operator.
We emphasize that this error bound is extremely useful in comparison to its prior counterparts based on the assumption that the space spanned by a finite number of so-called observables (dictionary) is invariant under the Koopman operator. The latter essentially requires to employ only Koopman eigenfunctions as observables, see, e.g., \cite{KordMezi20, Hase21}.

Let $(e_j)$ be the Mercer orthonormal basis of $L^2_\mu(\calX)$ corresponding to the kernel $k$ and let $\la_j = \|\calE e_j\|_\mu$ as well as $f_j := \sqrt{\la_j}e_j$ (cf.\ Theorem \ref{t:mercer}). We arrange the Mercer eigenvalues in a non-increasing way, i.e.,
$$
\la_1\ge\la_2\ge\ldots.
$$
Let $\psi\in\bH$. Then
\begin{align}\label{e:aufspaltung}
K^t\psi = \sum_{j=1}^\infty\<K^t\psi,e_j\>_\mu e_j = \sum_{j=1}^\infty\<C_\bH^t\psi,e_j\>e_j = \sum_{j=1}^N\<C_\bH^t\psi,e_j\>e_j + \sum_{j=N+1}^\infty\<C_\bH^t\psi,e_j\>e_j.
\end{align}

\subsection{Prediction error}
In the next theorem, we estimate the probabilistic error between the first summand
$$
K_N^t\psi = \sum_{j=1}^N\<C_\bH^t\psi,e_j\>e_j,\qquad\psi\in\bH,
$$
and its empirical estimator, which is of the form $\sum_{j=1}^N\<\wh C_\bH^{m,t}\psi,\wh e_j\>\wh e_j$ with approximations $\wh e_j$ of the $e_j$.

\begin{thm}\label{t:main}
Assume that the eigenvalues $\la_j$ of $C_\bH$ are simple, i.e., $\la_{j+1} < \la_j$ for all $j$. Fix an arbitrary $N\in\N$ and let
\begin{equation}\label{e:delta_N}
\delta_N = \min_{j=1,\ldots,N}\frac{\la_j - \la_{j+1}}{2}.    
\end{equation}
Further, let $\veps\in (0,\delta_N)$ and $\delta\in (0,1)$ be arbitrary and fix some\footnote{In case {\bf (2)}, by Corollary \ref{c:error_bound_compact}, an amount of at least
$m\,\ge\,\max\Big\{N\,,\,\frac{16\bE_0(t)}{\veps^2\delta}\|(I-K_0^t)^{-2}\|\Big\}$ data points suffices.} $m\ge\max\{N,\frac{2\sigma_m^2}{\veps^2\delta}\}$. Let now $\wh\la_1\ge\ldots\ge\wh\la_m$ denote the largest $m$ eigenvalues of $\wh C_\bH^m$ in descending order and let $\wh e_1,\ldots,\wh e_m$ be corresponding eigenfunctions, respectively, such that $\|\wh e_j\|=\wh\la_j^{-1/2}$ for $j=1,\ldots,m$. If we define
\begin{equation}\label{e:K_est}
\wh K_N^{m,t}\psi = \sum_{j=1}^N\<\wh C_\bH^{m,t}\psi,\wh e_j\>\wh e_j,\qquad\psi\in\bH,    
\end{equation}
then, with probability at least $1-\delta$, we have that
\begin{equation}\label{e:est_err}
\|K_N^t - \wh K_N^{m,t}\|_{\bH\to L^2_\mu(\calX)}\,\le\,\left[\frac 1{\sqrt{\la_N}} + \frac{N+1}{\delta_N\la_N}(1+\|\vphi\|_1)\|\vphi\|_1^{1/2}\right]\veps.
\end{equation}
All of the above statements equally apply to case {\bf (1)} upon replacing $\sigma_m$ by $E_0(t)$.
\end{thm}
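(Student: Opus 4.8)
The plan is to pass to a ``good event'' on which both empirical operators are close to their targets in Hilbert--Schmidt norm, then to convert closeness of $\wh C_\bH^m$ to $C_\bH$ into closeness of the truncated Mercer systems by spectral perturbation theory, and finally to split $K_N^t-\wh K_N^{m,t}$ into a piece governed by $\|\wh C_\bH^{m,t}-C_\bH^t\|$ and a piece governed by the eigenfunction perturbation. For the first step I would apply Proposition \ref{p:prob_est}---inequality \eqref{e:probC}, respectively \eqref{e:probCiidM} in case {\bf (1)}---both to the lag-$t$ estimator $\wh C_\bH^{m,t}$ and to the lag-$0$ estimator $\wh C_\bH^m$ of $C_\bH$ (whose Hilbert--Schmidt variance is controlled analogously, cf.\ Section \ref{sec:res1} and Corollary \ref{c:error_bound_compact}), and combine them by a union bound; the hypothesis $m\ge 2\sigma_m^2/(\veps^2\delta)$---the extra factor $2$ absorbing the union bound, with $E_0(t)$ in place of $\sigma_m^2$ in case {\bf (1)}---then gives that, with probability at least $1-\delta$, both $\|\wh C_\bH^{m,t}-C_\bH^t\|_{HS}\le\veps$ and $\|\wh C_\bH^m-C_\bH\|_{HS}\le\veps$ hold. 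All subsequent estimates are carried out on this event, with operator norms bounded by $\|\cdot\|_{HS}$.

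Second, for the perturbation step I would note that, since the $\la_j$ are simple and, for $j\le N$, at distance $\ge 2\delta_N$ from the rest of $\sigma(C_\bH)$, Weyl's inequality gives $|\wh\la_j-\la_j|\le\veps<\delta_N$ for all $j$; hence $\wh\la_1>\dots>\wh\la_{N+1}$ are simple, $\wh\la_j\ge\la_j/2\ge\la_N/2>0$ for $j\le N$, and---using $m\ge N$---the eigenfunctions $\wh e_1,\dots,\wh e_N$ with $\|\wh e_j\|=\wh\la_j^{-1/2}$ are well defined up to sign (which is immaterial, as $\wh K_N^{m,t}$ in \eqref{e:K_est} is invariant under $\wh e_j\mapsto-\wh e_j$). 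Writing $f_j=\sqrt{\la_j}\,e_j$ and $\wh f_j=\sqrt{\wh\la_j}\,\wh e_j$ for the $\bH$-normalised eigenvectors and fixing signs so that $\<f_j,\wh f_j\>\ge0$, a Davis--Kahan / resolvent estimate for the simple eigenvalue $\la_j$ with gap $\ge 2\delta_N$ gives $\|f_j-\wh f_j\|\lesssim\veps/\delta_N$; combined with $|\la_j^{-1/2}-\wh\la_j^{-1/2}|\lesssim\veps/\la_N^{3/2}$ this yields $\|e_j-\wh e_j\|\lesssim\veps/(\delta_N\sqrt{\la_N})$ in $\bH$ and, by the continuous embedding \eqref{e:bounded_embedding}, the same bound in $L^2_\mu$ up to the factor $\|\vphi\|_1^{1/2}$.

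Third, I would decompose, using $\<C_\bH^t\psi,e_j\>=\<K^t\psi,e_j\>_\mu$ and the maps $\Pi_N,\wh\Pi_N:\bH\to L^2_\mu$ with $\Pi_N\eta=\sum_{j=1}^N\<\eta,e_j\>e_j$ and $\wh\Pi_N\eta=\sum_{j=1}^N\<\eta,\wh e_j\>\wh e_j$ (so $K_N^t=\Pi_N C_\bH^t$, $\wh K_N^{m,t}=\wh\Pi_N\wh C_\bH^{m,t}$):
$$
K_N^t-\wh K_N^{m,t}=\Pi_N\big(C_\bH^t-\wh C_\bH^{m,t}\big)+\big(\Pi_N-\wh\Pi_N\big)\wh C_\bH^{m,t}=:(\mathrm I)+(\mathrm{II}).
$$
For $(\mathrm I)$: since $(e_j)$ is orthonormal in $L^2_\mu$ and $e_j=\la_j^{-1/2}f_j$ with $(f_j)$ orthonormal in $\bH$, I get $\|\Pi_N g\|_\mu^2=\sum_{j\le N}\la_j^{-1}|\<g,f_j\>|^2\le\la_N^{-1}\|g\|^2$, whence $\|(\mathrm I)\|_{\bH\to L^2_\mu}\le\la_N^{-1/2}\|C_\bH^t-\wh C_\bH^{m,t}\|\le\veps/\sqrt{\la_N}$---the first summand of \eqref{e:est_err}. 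For $(\mathrm{II})$: factoring $\Pi_N-\wh\Pi_N=\iota(\Pi_N^\bH-\wh\Pi_N^\bH)$ through $\bH$ with $\Pi_N^\bH\eta=\sum_{j\le N}\la_j^{-1}\<\eta,f_j\>f_j$ and $\wh\Pi_N^\bH$ analogously, a termwise estimate of the $N$ rank-one differences via the bounds of the previous step gives $\|\Pi_N^\bH-\wh\Pi_N^\bH\|\lesssim N\veps/(\delta_N\la_N)$; together with $\|(\mathrm{II})\|_{\bH\to L^2_\mu}\le\|\vphi\|_1^{1/2}\|\Pi_N^\bH-\wh\Pi_N^\bH\|\,\|\wh C_\bH^{m,t}\|$, the bound $\|\wh C_\bH^{m,t}\|\le\|C_\bH^t\|_{HS}+\veps\le\|\vphi\|_1+\veps$ (Lemma \ref{l:ECH}), and $\veps<\delta_N\le\la_1/2\le\|\vphi\|_1/2$, this collapses into the second summand of \eqref{e:est_err}. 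Adding the two bounds yields \eqref{e:est_err}; case {\bf (1)} follows verbatim with $\sigma_m$ replaced by $E_0(t)$ in the sample-size requirement.

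I expect the perturbation step to be the main obstacle: one has to (i) certify that $\wh e_1,\dots,\wh e_N$ genuinely form a truncated Mercer-type system of $\wh C_\bH^m$ (simplicity and strict positivity of the top $N$ empirical eigenvalues---precisely where the hypothesis $\veps<\delta_N$ and the gap \eqref{e:delta_N} enter), and (ii) derive $\|e_j-\wh e_j\|\lesssim\veps/(\delta_N\sqrt{\la_N})$ \emph{with the $\bH$-normalisation $\|e_j\|=\la_j^{-1/2}$} rather than the $L^2_\mu$-normalisation $\|e_j\|_\mu=1$, which forces a simultaneous bookkeeping of the spectral gap $\delta_N$, the smallest retained eigenvalue $\la_N$, and the discrepancy between $\la_j^{-1/2}$ and $\wh\la_j^{-1/2}$; it is this bookkeeping, together with the crude estimate $\|\wh C_\bH^{m,t}\|\le\|\vphi\|_1+\veps$, that produces the somewhat generous constant $\frac{N+1}{\delta_N\la_N}(1+\|\vphi\|_1)\|\vphi\|_1^{1/2}$ appearing in \eqref{e:est_err}.
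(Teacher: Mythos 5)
Your proposal is correct and follows essentially the same route as the paper: the same union bound over the two Hilbert--Schmidt events, the same decomposition (your term $(\mathrm I)$ is the paper's $K_N^t-\wt K_N^{m,t}$ with the intermediate operator $\wt K_N^{m,t}\psi=\sum_{j\le N}\<\wh C_\bH^{m,t}\psi,e_j\>e_j$, and $(\mathrm{II})$ is $\wt K_N^{m,t}-\wh K_N^{m,t}$), the same Weyl/Davis--Kahan perturbation step split into a projection-difference part and an eigenvalue-reciprocal part, and the same crude bound $\|\wh C_\bH^{m,t}\|\le\|\vphi\|_1+\veps$. The only cosmetic difference is that you phrase the eigenvector perturbation via sign-fixed vectors $f_j-\wh f_j$ where the paper works directly with the sign-invariant projection differences $P_j-\wh P_j$.
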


\begin{rem}\label{r:pseudo}
(a) If we set $\wh f_j = \wh\la_j^{1/2}\cdot\wh e_j$, then
$$
\wh C_\bH^m = \sum_{j=1}^m\wh\la_j\<\,\cdot\,,\wh f_j\>\wh f_j,
$$
and thus
$$
\sum_{j=1}^N\<\,\cdot\,,\wh e_j\>\wh e_j = \sum_{j=1}^N\frac 1{\wh\la_j}\<\,\cdot\,,\wh f_j\>\wh f_j = (\wh C_\bH^m)^\dagger\wh Q_N,
$$
where $\wh Q_N = \sum_{j=1}^N\<\,\cdot\,,\wh f_j\>\wh f_j$ is the orthogonal projector onto the span of the first $N$ eigenfunctions of $\wh C^{m}_\bH$ in $\bH$. Therefore,
$$
\wh K_N^{m,t}\psi = \sum_{j=1}^m\<\wh C_\bH^{m,t}\psi,\wh e_j\>\wh e_j = (\wh C_\bH^m)^\dagger\wh Q_N\wh C_\bH^{m,t}\psi.
$$
In particular, for $N=m$ we have $\wh K_N^{m,t} = (\wh C_\bH^m)^\dagger\wh C_\bH^{m,t}$, which surely is one of the first canonical choices for an empirical estimator of $K^t$.

\smallskip
\noindent(b) The functions $\wh e_j$ have unit length in the empirical $L^2_\mu$-norm:
\begin{align*}
\frac{1}{m}\sum_{k=1}^m \wh e_j(x_k) \wh e_j(x_k) &= \innerprod{\wh C^{m}_\bH \wh e_j}{\wh e_j} = 1.
\end{align*}
Therefore, projecting onto the first $N$ empirical Mercer features is the \emph{whitening transformation} commonly used in traditional EDMD~\cite{klus18_rev}.

\end{rem}

\begin{proof}[Proof of Theorem \rmref{t:main}]
By Proposition \ref{p:prob_est}, both events $\|C_\bH^t - \wh C_\bH^{m,t}\|_{HS}\le\veps$ and $\|C_\bH - \wh C_\bH^{m}\|_{HS}\le\veps$ occur with probability at least $1-\delta/2$, respectively. Hence, they occur simultaneously with probability at least $1-\delta$.

In the remainder of this proof we assume that both events occur. Then all the statements deduced in the following hold with probability at least $1-\delta$.

Let us define the intermediate approximation
$$
\wt K_N^{m,t}\psi = \sum_{j=1}^N\<\wh C_\bH^{m,t}\psi,e_j\>e_j,\qquad\psi\in\bH.
$$
Let $\psi\in\bH$ be arbitrary. Setting $C := C_\bH^t - \wh C_\bH^{m,t}$, we have
\begin{align*}
\|K_N^t\psi - \wt K_N^{m,t}\psi\|_\mu^2
&= \Bigg\| \sum_{j=1}^N\big\<C\psi,e_j\big\>e_j \Bigg\|_\mu^2 = \sum_{j=1}^N\big|\big\<C\psi,e_j\big\>\big|^2 = \sum_{j=1}^N\big|\big\<\psi,C^*e_j\big\>\big|^2\\
&\le \|\psi\|^2\sum_{j=1}^N\|C^*e_j\|^2 \le \|\psi\|^2\sum_{j=1}^N\frac 1{\la_j}\|C^*f_j\|^2\le \frac{\|\psi\|^2}{\la_N}\sum_{j=1}^N\|C^*f_j\|^2\\
&\le \frac{\|\psi\|^2}{\la_N}\sum_{j=1}^\infty\|C^*f_j\|^2 = \frac{\|\psi\|^2}{\la_N}\cdot\|C_\bH^t - \wh C_\bH^{m,t}\|_{HS}^2,
\end{align*}
and thus,
$$
\|K_N^t\psi - \wt K_N^{m,t}\psi\|_\mu\,\le\,\frac{\|\psi\|}{\sqrt{\la_N}}\cdot\veps.
$$
Next, we aim at estimating the remaining error
\begin{align*}
\wt K_N^{m,t}\psi - \wh K_N^{m,t}\psi
&= \sum_{j=1}^N\<\wh C_\bH^{m,t}\psi,e_j\>e_j - \sum_{j=1}^N\<\wh C_\bH^{m,t}\psi,\wh e_j\>\wh e_j\\
&= \sum_{j=1}^N\la_j^{-1}\<\wh C_\bH^{m,t}\psi,f_j\>f_j - \sum_{j=1}^N\wh\la_j^{-1}\<\wh C_\bH^{m,t}\psi,\wh f_j\>\wh f_j\\
&= \sum_{j=1}^N\la_j^{-1}\<f,f_j\>f_j - \sum_{j=1}^N\wh\la_j^{-1}\<f,\wh f_j\>\wh f_j\\
&= \sum_{j=1}^N\big[\la_j^{-1}P_jf - \wh\la_j^{-1}\wh P_jf\big]\\
&= \sum_{j=1}^N\la_j^{-1}(P_j - \wh P_j)f + \sum_{j=1}^N(\la_j^{-1}-\wh\la_j^{-1})\wh P_jf,
\end{align*}
where $f = \wh C_\bH^{m,t}\psi$,
$$
P_jf = \<f,f_j\>f_j
\qquad\text{and}\qquad
\wh P_jf = \<f,\wh f_j\>\wh f_j.
$$
By \eqref{e:bounded_embedding}, it suffices to estimate the above error in the $\|\cdot\|$-norm. By Theorem \ref{t:dk_special}, the first summand can be estimated as
\begin{align*}
\Big\|\sum_{j=1}^N\la_j^{-1}(P_j - \wh P_j)f\Big\|
&\le \sum_{j=1}^N\frac{1}{\la_j}\|P_j - \wh P_j\|\|f\|\,\le\,\frac{N\cdot\|C_\bH - \wh C_\bH^m\|}{\la_N\delta_N}\,\|f\|\,\le\,\frac{N}{\la_N\delta_N}\,\|f\|\veps.
\end{align*}
For the second summand we have
$$
\Big\|\sum_{j=1}^N(\la_j^{-1}-\wh\la_j^{-1})\wh P_jf\Big\|^2 = \sum_{j=1}^N|\la_j^{-1}-\wh\la_j^{-1}|^2\|\wh P_jf\|^2 = \sum_{j=1}^N\frac{|\la_j-\wh\la_j|^2}{\la_j^2\wh\la_j^2}\|\wh P_jf\|^2.
$$
Now, note that  $\epsilon < \delta_N$ by assumption and therefore $\|C_\bH - \wh C_\bH^m\|_{HS}\le\delta_N\le\frac{\la_N-\la_{N+1}}{2}\le\frac{\la_N}2$. For $j=1,\ldots,N$, according to Theorem \ref{t:eigs} this implies
$$
\wh\la_j\,\ge\,\la_j - |\la_j - \wh\la_j|\,\ge\,\la_j - \|C_\bH - \wh C_\bH^m\|_{HS}\,\ge\,\la_j - \tfrac{\la_N}2\,\ge\,\tfrac{\la_j}2.
$$
Hence,
$$
\Big\|\sum_{j=1}^N(\la_j^{-1}-\wh\la_j^{-1})\wh P_jf\Big\|^2\,\le\,4\sum_{j=1}^N\frac{|\la_j-\wh\la_j|^2}{\la_j^4}\|\wh P_jf\|^2\,\le\,4\frac{\|C_\bH - \wh C_\bH^m\|_{HS}^2}{\la_N^4}\|\wh Q_N f\|^2,
$$
and thus,
$$
\Big\|\sum_{j=1}^N(\la_j^{-1}-\wh\la_j^{-1})\wh P_jf\Big\|\,\le\,\frac{2}{\la_N^2}\|f\|\veps\,\le\,\frac{1}{\la_N\delta_N}\|f\|\veps.
$$
From
$$
\|\wh C_\bH^{m,t}\|\le\|\wh C_\bH^{m,t} - C_\bH^t\| + \|C_\bH^t\|\le \|\wh C_\bH^{m,t} - C_\bH^t\|_{HS} + \|\calE K^t\calE^*\|\le\veps+\|\vphi\|_1
$$
we conclude
\begin{align*}
\big\|\wt K_N^{m,t}\psi - \wh K_N^{m,t}\psi\big\|
&\le \frac{N+1}{\la_N\delta_N}\|\wh C_\bH^{m,t}\psi\|\veps\,\le\,\frac{N+1}{\la_N\delta_N}(\veps + \|\vphi\|_1)\|\psi\|\veps.
\end{align*}
All together, we obtain (recall \eqref{e:bounded_embedding})
\begin{align*}
\|K_N^t\psi - \wh K_N^{m,t}\psi\|_\mu
&\le \|K_N^t\psi - \wt K_N^{m,t}\psi\|_\mu + \|\vphi\|_1^{1/2}\|\wt K_N^{m,t}\psi - \wh K_N^{m,t}\psi\|\\
&\le \frac{\|\psi\|}{\sqrt{\la_N}}\cdot\veps + \frac{N+1}{\la_N\delta_N}(\veps + \|\vphi\|_1)\|\vphi\|_1^{1/2}\|\psi\|\veps\\
&= \left[\frac 1{\sqrt{\la_N}} + \frac{N+1}{\delta_N\la_N}(1+\|\vphi\|_1)\|\vphi\|_1^{1/2}\right]\veps\cdot\|\psi\|,
\end{align*}
which implies \eqref{e:est_err}.
\end{proof}

\subsection{Projection error in case of Koopman-invariance of the RKHS}\label{ss:invariant}
In the preceding section, we have seen that the empirical operator $\wh K^{m, t}_N$ can be written as $(\wh C^{m}_\bH)^\dagger \wh C^{m, t}_\bH$ if $m = N$. In the limit $m \rightarrow \infty$, we would arrive at the operator $C_\bH^{-1} C^t_\bH$, which is not even well-defined for all $\psi\in\bH$, in general. However, if the RKHS is invariant under $K^t$, the above operator limit is well-defined as a bounded operator on $\bH$. In this situation we are able to extend Theorem \ref{t:main} to an estimate on the full error made by our empirical estimator.

We start by defining the operator
$$
K_\bH^t := C_\bH^{-1}C_\bH^t
$$
on its natural domain
\begin{equation}\label{e:domain}
\dom K_\bH^t := \{\psi\in\bH : C_\bH^t\psi\in\ran C_\bH\}.
\end{equation}
We consider $K_\bH^t$ as an operator from $\bH$ into itself (with domain of definition in $\bH$).

\begin{lem}\label{l:closed}
We have
\begin{equation}\label{e:domain2}
\dom K_\bH^t = \{\psi\in\bH : K^t\psi\in\bH\},
\end{equation}
and $K_\bH^t$ is closed.
\end{lem}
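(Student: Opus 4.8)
The plan is to prove the set identity \eqref{e:domain2} first, by unwinding the definitions. Using $C_\bH^t = \calE K^t\calE^*$, $C_\bH = \calE\calE^*$, and the fact \eqref{e:embedding} that $\calE^*:\bH\to L^2_\mu(\calX)$ is the inclusion, one has $C_\bH^t\psi = \calE(K^t\psi)$ with $K^t\psi\in L^2_\mu(\calX)$, while $C_\bH\eta = \calE\eta$ for every $\eta\in\bH$, so that $\ran C_\bH = \calE(\bH)$, the $\calE$-image of $\bH$ regarded as a subspace of $L^2_\mu(\calX)$ (recall $\bH\subset L^2_\mu(\calX)$ by Lemma \ref{l:ECH}(b)). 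Hence $\psi\in\dom K_\bH^t$ iff $\calE(K^t\psi)\in\calE(\bH)$, which, since $\calE$ is injective by Lemma \ref{l:ECH}(a), is equivalent to the existence of $\eta\in\bH$ with $K^t\psi = \eta$ in $L^2_\mu(\calX)$, i.e.\ to $K^t\psi\in\bH$. This proves \eqref{e:domain2}; moreover, invoking injectivity of $C_\bH$ (Lemma \ref{l:ECH}(c)), the same computation shows $K_\bH^t\psi = C_\bH^{-1}C_\bH(K^t\psi) = K^t\psi$ for all $\psi\in\dom K_\bH^t$, which I would record for later use.

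For closedness, the plan is to exhibit $K_\bH^t = C_\bH^{-1}\circ C_\bH^t$ as the composition of a bounded everywhere-defined operator $C_\bH^t\in L(\bH)$ with the closed operator $C_\bH^{-1}$, and to use that any such composition is closed. The operator $C_\bH^{-1}$, defined on $\ran C_\bH$, is closed because its graph $\{(C_\bH\eta,\eta):\eta\in\bH\}$ is the image of the (closed, since $C_\bH$ is bounded) graph of $C_\bH$ under the coordinate swap $(a,b)\mapsto(b,a)$, a homeomorphism of $\bH\times\bH$. Then, given $\psi_n\in\dom K_\bH^t$ with $\psi_n\to\psi$ in $\bH$ and $K_\bH^t\psi_n\to\chi$, continuity of $C_\bH^t$ yields $C_\bH^t\psi_n\to C_\bH^t\psi$, and since $C_\bH^t\psi_n\in\ran C_\bH$ with $C_\bH^{-1}(C_\bH^t\psi_n)=K_\bH^t\psi_n\to\chi$, closedness of $C_\bH^{-1}$ forces $C_\bH^t\psi\in\ran C_\bH$ and $C_\bH^{-1}C_\bH^t\psi=\chi$; that is, $\psi\in\dom K_\bH^t$ and $K_\bH^t\psi=\chi$.

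There is no substantial obstacle here; the points needing care are purely bookkeeping. One must keep the distinction between an element of $\bH$ and its image in $L^2_\mu(\calX)$ straight — the Compatibility Assumption (A3) is precisely what makes the assertion ``$K^t\psi\in\bH$'' unambiguous, since the representative of the class $K^t\psi$ lying in $\bH$, when it exists, is then unique — and one must apply injectivity of $\calE$ and of $C_\bH$ at the correct steps. I would also point out explicitly that $K_\bH^t$ need not be bounded and that $\dom K_\bH^t$ may be a proper subspace of $\bH$ in general (it is all of $\bH$ precisely when $\bH$ is $K^t$-invariant, by \eqref{e:domain2}), which is why closedness, rather than boundedness, is the natural statement and is what is needed in Section~\ref{ss:invariant}.
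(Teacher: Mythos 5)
Your proof is correct and follows essentially the same route as the paper: the domain identity is obtained by writing $C_\bH^t\psi=\calE(K^t\psi)$ and $C_\bH\phi=\calE\phi$ and invoking injectivity of $\calE$, and closedness is obtained from the continuity of $C_\bH^t$ and $C_\bH$ together with uniqueness of limits, yielding $C_\bH^t\psi=C_\bH\chi$. Your packaging of the second step as ``bounded composed with the closed operator $C_\bH^{-1}$'' is just a reformulation of the paper's direct limit argument, and your side remarks (that $K_\bH^t\psi=K^t\psi$ on the domain, and the role of (A3)) are accurate.
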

\begin{proof}
Note that $C_\bH^t\psi\in\ran C_\bH$ if and only if $\calE K^t\psi = C_\bH\phi$ for some $\phi\in\bH$. Since $\C_\bH\phi = \calE\phi$ and $\ker\calE = \{0\}$, the latter is equivalent to $K^t\psi = \phi\in\bH$, which proves the representation of the domain. As to the closedness of $K_\bH^t$, let $(\psi_n)\subset\dom K_\bH^t$ and $\phi\in\bH$ such that $\psi_n\to\psi$ in $\bH$ and $K_\bH^t\psi_n\to\phi$ in $\bH$ as $n\to\infty$. The latter implies $C_\bH^t\psi_n\to C_\bH\phi$, while the first implies $C_\bH^t\psi_n\to C_\bH^t\psi$ in $\bH$ as $n\to\infty$, from which we conclude that $C_\bH^t\psi = C_\bH\phi$, i.e., $\psi\in\dom K_\bH^t$ and $K_\bH^t\psi = \phi$.
\end{proof}

If the Koopman operator leaves the RKHS $\bH$ invariant (i.e., $K^t\bH\subset\bH$), $K^t_\bH$ is defined on all of $\bH$. Moreover, since the canonical inclusion map $\calE^* : \bH\to L_\mu^2(\calX)$ is injective, it possesses an unbounded inverse on its range $\bH$, and therefore:
\begin{align}
\label{eq:invariant_kernel_koop}
C_\bH^{-1} C^t_\bH \phi &= C_\bH^{-1}\calE K^t \calE^* \phi = (\calE \calE^*)^{-1} \calE \calE^* (\calE^*)^{-1} K^t \calE^* \phi = (\calE^*)^{-1}K^t\calE^*\phi.
\end{align}
Remarkably, invariance of $\bH$ under the Koopman operator implies that the left-hand side not only reproduces the Koopman operator on $\bH$, but actually defines a bounded operation.

Parts of the next proposition can be found in \cite[Theorem 5.3]{kss} and~\cite[Theorem 1]{Douglas1966}.

\begin{prop}
For $t>0$, the following statements are equivalent:
\begin{enumerate}
\item[{\rm (i)}]   $K^t\bH\subset\bH$.
\item[{\rm (ii)}]  $K_\bH^t\in L(\bH)$.
\item[{\rm (iii)}] $\ran C_\bH^t\subset\ran C_\bH$.
\end{enumerate}
\end{prop}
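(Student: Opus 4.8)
The plan is to run the cycle (ii) $\Rightarrow$ (i) $\Leftrightarrow$ (iii) $\Rightarrow$ (ii), using Lemma \ref{l:closed} as the only genuinely analytic input. The implication (ii) $\Rightarrow$ (i) is essentially definitional: if $K_\bH^t\in L(\bH)$ then in particular $\dom K_\bH^t = \bH$, and by the description \eqref{e:domain2} of the domain this says precisely that $K^t\psi\in\bH$ for every $\psi\in\bH$, i.e.\ $K^t\bH\subset\bH$.

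Next I would establish (i) $\Leftrightarrow$ (iii) by observing that both conditions are merely reformulations of "$\dom K_\bH^t = \bH$''. Indeed, the defining formula \eqref{e:domain} gives $\dom K_\bH^t = \{\psi\in\bH : C_\bH^t\psi\in\ran C_\bH\}$, so $\dom K_\bH^t = \bH$ is literally statement (iii); and by \eqref{e:domain2} in Lemma \ref{l:closed} we also have $\dom K_\bH^t = \{\psi\in\bH : K^t\psi\in\bH\}$, so $\dom K_\bH^t = \bH$ is exactly statement (i). (If one prefers, the same passage can be made directly through the computation \eqref{eq:invariant_kernel_koop}, but routing everything through the domain is the cleanest bookkeeping.)

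Finally, for (iii) $\Rightarrow$ (ii): assuming (iii), the operator $K_\bH^t = C_\bH^{-1}C_\bH^t$ is defined on all of $\bH$, and by Lemma \ref{l:closed} it is closed; since $\bH$ is a Hilbert, hence Banach, space, the closed graph theorem yields $K_\bH^t\in L(\bH)$, which is (ii). This closes the cycle and proves the proposition.

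The only non-formal ingredient is the closedness of $K_\bH^t$, which is already furnished by Lemma \ref{l:closed}; everything else is bookkeeping with the two descriptions of the domain, so I do not anticipate a real obstacle. The one point to state carefully is that "$K_\bH^t\in L(\bH)$'' is to be read as "$\dom K_\bH^t = \bH$ and $K_\bH^t$ bounded''. As an alternative to the closed graph argument for (iii) $\Rightarrow$ (ii), Douglas' range-inclusion theorem \cite{Douglas1966} gives, whenever $\ran C_\bH^t\subset\ran C_\bH$, a bounded $S\in L(\bH)$ with $C_\bH^t = C_\bH S$, whence $K_\bH^t = C_\bH^{-1}C_\bH^t = S\in L(\bH)$; this route additionally produces an explicit operator-norm bound, at the cost of invoking Douglas' theorem.
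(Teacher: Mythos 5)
Your argument is correct and is essentially the paper's own proof: both reduce (i) and (iii) to the single condition $\dom K_\bH^t=\bH$ via the two descriptions \eqref{e:domain} and \eqref{e:domain2} of the domain, and then pass to (ii) by the closed graph theorem using the closedness established in Lemma~\ref{l:closed}. The Douglas-theorem alternative you mention is a nice aside (the paper cites \cite{Douglas1966} for context) but is not needed.
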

\begin{proof}
With regard to the two representations \eqref{e:domain} and \eqref{e:domain2} of the domain, it is immediate that both (i) and (iii) are equivalent to $\dom K_\bH^t = \bH$. The equivalence of the latter to (ii) follows from the closed graph theorem.
\end{proof}

Note that if one of (i)--(iii) holds, then $K_\bH^t = K^t|_\bH$.

\begin{thm}\label{t:proj_err}
In addition to the assumptions in Theorem \ref{t:main}, assume that $\bH$ is invariant under the Koopman operator $K^t$. For fixed $N\in\N$, let $\delta_N$ be as in \eqref{e:delta_N}, choose $\veps$, $\delta$, and $m$ as in Theorem \ref{t:main} and define the empirical estimator $\wh K_N^{m,t}$ as in \eqref{e:K_est}. Then, with probability at least $1-\delta$ we have that
\begin{equation}\label{e:full_est}
\|K^t - \wh K_N^{m,t}\|_{\bH\to L^2_\mu(\calX)}\,\le\,\sqrt{\la_{N+1}}\,\|K_\bH^t\| + \left[\frac 1{\sqrt{\la_N}} + \frac{N+1}{\delta_N\la_N}(1+\|\vphi\|_1)\|\vphi\|_1^{1/2}\right]\veps.
\end{equation}
\end{thm}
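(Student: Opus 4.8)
The plan is to split the error by the triangle inequality,
\[
\|K^t - \wh K_N^{m,t}\|_{\bH\to L^2_\mu(\calX)} \le \|K^t - K_N^t\|_{\bH\to L^2_\mu(\calX)} + \|K_N^t - \wh K_N^{m,t}\|_{\bH\to L^2_\mu(\calX)},
\]
and to note that the second term is exactly what Theorem \ref{t:main} controls: under the stated choices of $\veps$, $\delta$, $m$ it is bounded, with probability at least $1-\delta$, by $\big[\tfrac 1{\sqrt{\la_N}} + \tfrac{N+1}{\delta_N\la_N}(1+\|\vphi\|_1)\|\vphi\|_1^{1/2}\big]\veps$; this is precisely \eqref{e:est_err}. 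It therefore remains to prove the \emph{deterministic} truncation bound $\|K^t - K_N^t\|_{\bH\to L^2_\mu(\calX)} \le \sqrt{\la_{N+1}}\,\|K_\bH^t\|$, which, added to the above estimate, yields \eqref{e:full_est}.

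For the truncation bound, fix $\psi\in\bH$ with $\|\psi\|=1$. By \eqref{e:aufspaltung}, the difference $(K^t - K_N^t)\psi$ equals the $L^2_\mu$-orthogonal tail $\sum_{j=N+1}^\infty\<K^t\psi,e_j\>_\mu e_j$, i.e.\ $(I-\Pi_N)K^t\psi$, where $\Pi_N$ denotes the orthogonal projection in $L^2_\mu(\calX)$ onto $\linspan\{e_1,\dots,e_N\}$. At this point the invariance hypothesis enters: since $\bH$ is $K^t$-invariant, $K^t\psi = K_\bH^t\psi\in\bH$, so we may expand it in the Mercer orthonormal basis $(f_j)$ of $\bH$, writing $K_\bH^t\psi = \sum_{j\ge 1}c_jf_j$ with $\sum_j c_j^2 = \|K_\bH^t\psi\|^2 \le \|K_\bH^t\|^2$. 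Because $f_j = \sqrt{\la_j}\,e_j$ and $\Pi_N f_j = 0$ for $j>N$, the tail becomes $\sum_{j>N}c_j\sqrt{\la_j}\,e_j$, and orthonormality of the $e_j$ in $L^2_\mu(\calX)$ together with the monotonicity $\la_1\ge\la_2\ge\cdots$ gives
\[
\|(K^t-K_N^t)\psi\|_\mu^2 = \sum_{j>N}\la_j c_j^2 \le \la_{N+1}\sum_{j>N}c_j^2 \le \la_{N+1}\|K_\bH^t\|^2 .
\]
Taking square roots and the supremum over unit $\psi\in\bH$ proves the truncation bound.

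I do not anticipate a genuine obstacle. The only points requiring care are the bookkeeping between the $L^2_\mu$- and $\bH$-inner products (and the normalization $f_j=\sqrt{\la_j}e_j$), and the recognition that Koopman-invariance of $\bH$ is exactly the hypothesis guaranteeing $K^t\psi\in\bH$, so that its $\bH$-Fourier coefficients $c_j$ are square-summable with $\sum_j c_j^2 = \|K_\bH^t\psi\|^2$; without invariance the quantity $\sum_j\la_j c_j^2$ governing the truncation error need not be finite, which is the structural reason the full error bound holds only in the invariant case.
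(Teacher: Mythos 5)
Your proposal is correct and follows essentially the same route as the paper: the same triangle-inequality split with Theorem \ref{t:main} handling the empirical part, and the same truncation bound obtained from the Mercer relation $f_j=\sqrt{\la_j}\,e_j$, yielding $\sum_{j>N}\la_j\,|\<K_\bH^t\psi,f_j\>|^2\le\la_{N+1}\|K_\bH^t\psi\|^2$ (the paper phrases this via $\<C_\bH^t\psi,e_j\>=\sqrt{\la_j}\<K_\bH^t\psi,f_j\>$ rather than by expanding $K_\bH^t\psi$ in the orthonormal basis $(f_j)$, but the computation is identical).
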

\begin{proof}
First of all, Theorem \ref{t:main} implies that
\begin{align*}
\|K^t - \wh K_N^{m,t}\|_{\bH\to L^2_\mu(\calX)}
&\le\|K^t - K_N^t\|_{\bH\to L^2_\mu(\calX)} + \|K_N^t - \wh K_N^{m,t}\|_{\bH\to L^2_\mu(\calX)}\\
&\le \|K^t - K_N^t\|_{\bH\to L^2_\mu(\calX)} + \left[\frac 1{\sqrt{\la_N}} + \frac{N+1}{\delta_N\la_N}(1+\|\vphi\|_1)\|\vphi\|_1^{1/2}\right]\veps.
\end{align*}
Now, for $\psi\in\bH$,
\begin{align*}
\|K^t\psi - K_N^t\psi\|_\mu^2
&= \Bigg\|\sum_{j=N+1}^\infty\<C_\bH^t\psi,e_j\>e_j\Bigg\|_\mu^2 = \sum_{j=N+1}^\infty|\<C_\bH^t\psi,e_j\>|^2 = \sum_{j=N+1}^\infty\frac{1}{\la_j}|\<C_\bH^t\psi,f_j\>|^2\\
&= \sum_{j=N+1}^\infty\frac{1}{\la_j}|\<K_\bH^t\psi,C_\bH f_j\>|^2 = \sum_{j=N+1}^\infty\la_j|\<K_\bH^t\psi,f_j\>|^2 \le \la_{N+1}\|K_\bH^t\psi\|^2,
\end{align*}
which proves the theorem.
\end{proof}

\begin{rem}
(a) The proof of Theorem \ref{t:proj_err} shows that the projection error $\|K^t\psi - K_N^t\psi\|_\mu$ decays at least as fast as the square roots of the eigenvalues of $C_\bH$. Recall that $(\la_j)_{j\in\N}\in\ell^1(\N)$, since $C_\bH$ is trace class with $\sum_{j=1}^\infty\la_j = \Tr(C_\bH) = \|\calE^*\|_{HS}^2 = \|\vphi\|_1$, see Lemma \ref{l:ECH}(c).

\smallskip
\noindent
(b) In \ref{s:OU_invariance}, we prove that the RKHS generated by Gaussian RBF kernels on $\R$ is invariant under the Koopman semigroup associated with the 1D Ornstein-Uhlenbeck process. In fact, it can be proved that this invariance also holds in higher dimensions. This shows that the assumption in Theorem \ref{t:proj_err} is not too exotic and can be satisifed.
\end{rem}

\section{Illustration with the Ornstein-Uhlenbeck process}
\label{sec:numerics}
For the numerical illustration of our results, we consider the Ornstein-Uhlenbeck (OU) process on $\calX = \R$, which is given by the SDE
$$
dX_t = -\alpha X_t\,dt + dW_t,
$$
where $\alpha > 0$ is a positive parameter.

\subsection{Analytical Results}
Since all relevant properties of the OU process are available in analytical form, we can exactly calculate all of the terms appearing in our theoretical error bounds. Moreover, we can also compute the exact estimation and prediction errors for finite data in closed form. Let us begin by recapping the analytical results required for our analysis, which can be found in~\cite{pav14}.

The invariant measure $\mu$, and the density of the stochastic transition kernel $\rho_t$, are given by
\begin{align*}
d\mu(x) = \sqrt{\frac{\alpha}{\pi}} e^{-\alpha x^2}\,dx
\qquad\text{and}\qquad
\rho_t(x,dy) = \sqrt{\frac{\alpha}{\pi v_t^2}}\exp\Big[-\frac{\alpha}{v_t^2}(y - e^{-\alpha t}x)^2\Big]\,dy,
\end{align*}
with $v_t^2 = 1 - e^{-2\alpha t}$. The Koopman operators $K^t$ are self-adjoint in $L^2_\mu(\R)$, their eigenvalues and corresponding eigenfunctions are given by
\[
q_j = e^{-\alpha j t}
\qquad\text{and}\qquad
\psi_j(x) = \frac{1}{\sqrt{2^j \alpha^j j!}}H_j(\sqrt{2\alpha}x),\quad j\in\N_0,
\]
where $H_j$ are the physicist's Hermite polynomials.

We consider the Gaussian radial basis function (RBF) kernel with bandwidth $\sigma>0$, i.e.,
\begin{equation*}
k(x, y) = \exp\left[-\frac{(x-y)^2}{\sigma^2}\right].
\end{equation*}
Let us quickly verify that this choice of the kernel satisfies the compatibility assumptions (A1)--(A3). Indeed, (A1) is trivial as $k(x,x) = 1$ and (A3) follows easily from the continuity of the functions in $\bH$. To see that $\bH$ is dense in $L^2_\mu(\R)$ (i.e., (A2)), let $\psi\in L^2_\mu(\R)$ be such that $\<\psi,\Phi(y)\>_\mu=0$ for all $y\in\R$. The latter means that $\phi * \vphi_\sigma = 0$, where $\phi(x) = \psi(x)e^{-\alpha x^2}$ and $\vphi_\sigma(x) = e^{-x^2/\sigma^2}$. We apply the Fourier transform and obtain $\wh\phi\cdot\wh{\vphi_\sigma} = 0$. Noting that the Fourier transform of a Gaussian is again a Gaussian, we get $\wh\phi=0$ and thus $\psi=0$.

The Mercer eigenvalues and features with respect to the invariant measure $\mu$ of the Ornstein-Uhlenbeck process, i.e., the eigenvalues and eigenfunctions of the integral operator $\mathcal{E}^* \mathcal{E}$ in $L^2_\mu(\R)$, are also available in analytical form~\cite{fass12}. They are given by
\begin{align*}
\lambda_i = \sqrt{\frac{\alpha}{C_1}} \left[\frac{1}{\sigma^2 C_1}\right]^{i}
\qquad\text{and}\qquad
\varphi_i(x) = \gamma_i e^{-\zeta^2 x^2}H_{i}\left(\sqrt{\alpha}\eta x\right),\quad i\in\N_0,
\end{align*}
using the following constants:
\begin{align*}
\eta &= \left[1 + \frac{4}{\alpha\sigma^2}\right]^{1/4}, & \gamma_i &= \left[\frac\eta{2^i\Gamma(i+1)}\right]^{1/2}, & \zeta^2 &= \frac{\alpha}{2}(\eta^2 - 1), & C_1 &= \alpha + \zeta^2 + \sigma^{-2}.
\end{align*}
With these results, we can compute the variance of the empirical estimator for $C^t_\bH$ as described in Theorem~\ref{t:Eest}. The eigenvalues $q_j$ were already given above. The coefficients $d_{j,t}$ (cf.\ Remark \ref{e:self-adjoint}) are given by
\begin{align*}
d_{j,t}
&= \sum_{k,\ell} \lambda_k\la_\ell \left[\int \vphi_k(x)\vphi_\ell(y)\psi_j(x) \,d\mu_{0,t}(x,y)\right] \left[\int \vphi_\ell(x)\vphi_k(y)\psi_j(x) \,d\mu_{0,t}(x,y)\right].
\end{align*}
The series needs to be truncated at a finite number of terms and the integrals can be calculated by numerical integration. As furthermore (see the proof of Theorem \ref{t:Eest})
\begin{align}\label{e:ct_hsnorm_ou}
\|C^t_\bH \|^2_{HS}
&= \sum_{k,\ell}|\<g_{k\ell},\one\>_\mu|^2 = \sum_{k,\ell} \lambda_k \lambda_\ell \left[\int \varphi_k(x)\varphi_\ell(y) \,d\mu_{0,t}(x, y)\right]^2
\end{align}

the Hilbert-Schmidt norm of the cross-covariance operator $C^t_\bH$ can be computed similarly. Since, for the Gaussian RBF kernel, we have $\varphi(x) = k(x, x) = 1$ for all $x$, we therefore find
\begin{align*}
\mathbb{E}_0(t) = \innerprod{K^t \varphi}{\varphi}_\mu - \|C^t_\bH\|^2_{HS} = 1 -  \|C^t_\bH\|^2_{HS},
\end{align*}
completing the list of terms required by Theorem~\ref{t:Eest} and Remark~\ref{rem:simple_bound_sigm}. In addition, we notice that upon replacing either one or two of the integrals in~\eqref{e:ct_hsnorm_ou} by finite-data averages, we can also calculate $\|\wh{C}^{m, t}_\bH \|^2_{HS}$ and $\<C^t_\bH,\wh{C}^{m, t}_\bH\>_{HS}$. Therefore, the estimation error for finite data $\{(x_k, y_k)\}_{k=1}^m$ can be obtained by simply expanding the inner product
\[
\|C^t_\bH - \wh{C}^{m, t}_\bH \|^2_{HS} = \|C^t_\bH \|^2_{HS} + \|\wh{C}^{m, t}_\bH \|^2_{HS} - 2\<\wh{C}^{m, t}_\bH,C^t_\bH\>_{HS},
\]
allowing us to precisely compare the estimation error to the error bounds obtained in Theorem~\ref{t:Eest}. 

Besides the estimation error for $C^t_\bH$, we are also interested in the prediction error, which is bounded according to Theorem~\ref{t:main}. We will compare these bounds to the actual error $\|(K^t_N - \wh{K}^{m,t}_N) \phi \|_{L^2_\mu(\calX)}$, for a specific observable $\phi \in \bH$ and a fixed number of  $N$ Mercer features. For the OU process, it is again beneficial to consider Gaussian observables $\phi$:
\[ \phi(x) = \frac{1}{\sqrt{2\pi \sigma_0^2}} \exp\left[-\frac{(x - m_0)^2}{2\sigma_0^2}\right]. \]
Application of the Koopman operator leads to yet another, unnormalized Gaussian observable, which is given by
\begin{align*}
K^t \phi(x) &= \frac{1}{\sqrt{2\pi \sigma_t^2}} \exp\left[-\frac{(m_0 - e^{-\alpha t}x)^2}{2\sigma_t^2} \right], & \sigma_t^2 = \sigma_0^2 + v_t^2.
\end{align*}
The inner products of $K^t \phi$ with the Mercer eigenfunctions $\varphi_i$ can be evaluated by numerical integration, providing full access to the truncated observable $K^t_N \phi$. On the other hand, the empirical approximation $\wh{K}^{m, t}_N \phi$ can be computed directly based on the data. We note that
\begin{align*}
\wh{K}^{m, t}_N \phi &= \sum_{j=1}^N \innerprod{\wh{C}^{m, t}_\bH \phi}{\wh{e}_j}\wh{e}_j
= \frac{1}{m}\sum_{k=1}^m \phi(y_k) \sum_{j=1}^N\innerprod{\Phi(x_k)}{\wh{e}_j}\wh{e}_j = \frac{1}{m}\sum_{k=1}^m \phi(y_k) \sum_{j=1}^N \wh{e}_j(x_k)\wh{e}_j.
\end{align*}
The functions $\wh{e}_j$ can be obtained from the eigenvalue decomposition of the standard kernel Gramian matrix
\[
\frac{1}{m}K_\calX := \frac{1}{m}\left[k(x_k, x_l)\right]_{k,l=1}^m,
\]
as the latter is the matrix representation of the empirical covariance operator $\wh{C}^m_\bH$ on the subspace $\mathrm{span}\{\Phi(x_k) \}_{k=1}^m$. If $\frac{1}{m}K_\calX = V\Lambda V^\top$ is the spectral decomposition of the Gramian, then
\[
\wh{e}_j = \frac{1}{m^{1/2} \wh{\lambda}_j}\sum_{l=1}^m V_{lj} \Phi(x_l)
\]
are the correctly normalized eigenfunctions according to Theorem~\ref{t:main}. Plugging this into the above, we find
\begin{align*}
\wh{K}^{m, t}_N \phi(x) &= \frac{1}{m}\sum_{k=1}^m \phi(y_k) \sum_{j=1}^N  \frac{1}{m^{1/2}\wh{\lambda}_j}\sum_{l=1}^m V_{lj} k(x_l, x_k)  \frac{1}{m^{1/2}\wh{\lambda}_j}\sum_{r=1}^m V_{rj} k(x_r, x) \\
&= \frac{1}{m} \phi(Y)^\top \frac{1}{m}K_\calX \left[V_N \Lambda_N^{-2} V_N^\top\right] K_{\calX, x} \\
&= \frac{1}{m} \phi(Y)^\top V_N \Lambda_N^{-1} V_N^\top K_{\calX, x},
\end{align*}
where $\phi(Y) = [\phi(y_k)]_{k=1}^m$, $K_{\calX,x} = [k(x_k,x)]_{k=1}^m$, $V_N = V[I_N\;0_{m-N}]^\top$, $\Lambda_N = \diag(\wh\la_j)_{j=1}^N$.

\subsection{Numerical Results}
For the actual numerical experiments, we set $\alpha = 1$, choose the Koopman lag time as $t = 0.05$, and downsample all simulation data such that successive time steps are separated by time $t$. We compute the exact variance $\mathbb{E}[\|C^t_\bH - \wh{C}^{m, t}_\bH \|^2_{HS}]$ by the expression given in Theorem~\ref{t:Eest}, and also the coarser estimate for the variance given in Remark~\ref{rem:simple_bound_sigm}. In addition, we also compute the i.i.d. variance $\frac{1}{m}\mathbb{E}_0(t)$. We test three different kernel bandwidths, $\sigma \in \{0.05, 0.1, 0.5 \}$. All Mercer series are truncated after the first 10 terms for $\sigma \in \{0.1, 0.5 \}$, and 20 terms for $\sigma = 0.05$, while Koopman eigenfunction expansions are truncated after 15 terms.

In the first set of experiments, we use Chebyshev's inequality as in Proposition~\ref{p:prob_est} combined with the variance estimates described above to compute the maximal estimation error $\|C^t_\bH - \wh{C}^{m, t}_\bH \|_{HS}$ that can be guaranteed with confidence $1 - \delta = 0.9$, for a range of data sizes $m$ between $m = 20$ and $m = 50.000$. As a comparison, we generate $200$ independent simulations with simulation horizon $m\cdot t$, for each data size $m$. We then compute the resulting estimation error using the expressions given in the previous section. The comparison of these results for all data sizes $m$ and the different kernel bandwidths is shown in Figure~\ref{fig:estimation_error_ou}. We observe that the bound based on the exact variance from Theorem~\ref{t:Eest} is quite accurate, over-estimating the actual error by about a factor three, and captures the detailed qualitative dependence of the estimation error on $m$ and $\sigma$. The coarser bound from Remark~\ref{rem:simple_bound_sigm}, however, appears to discard too much information, it over-estimates the error by at least an order of magnitude, and also does not change significantly with $\sigma$. Finally, we note that for larger kernel bandwidth, the i.i.d.\ variance is indeed too small, leading to an under-estimation of the error. This observation confirms that it is indeed necessary to take the effect of the correlation between data points into account.

\begin{figure}[htb]
\centering
\includegraphics[width=0.48\textwidth]{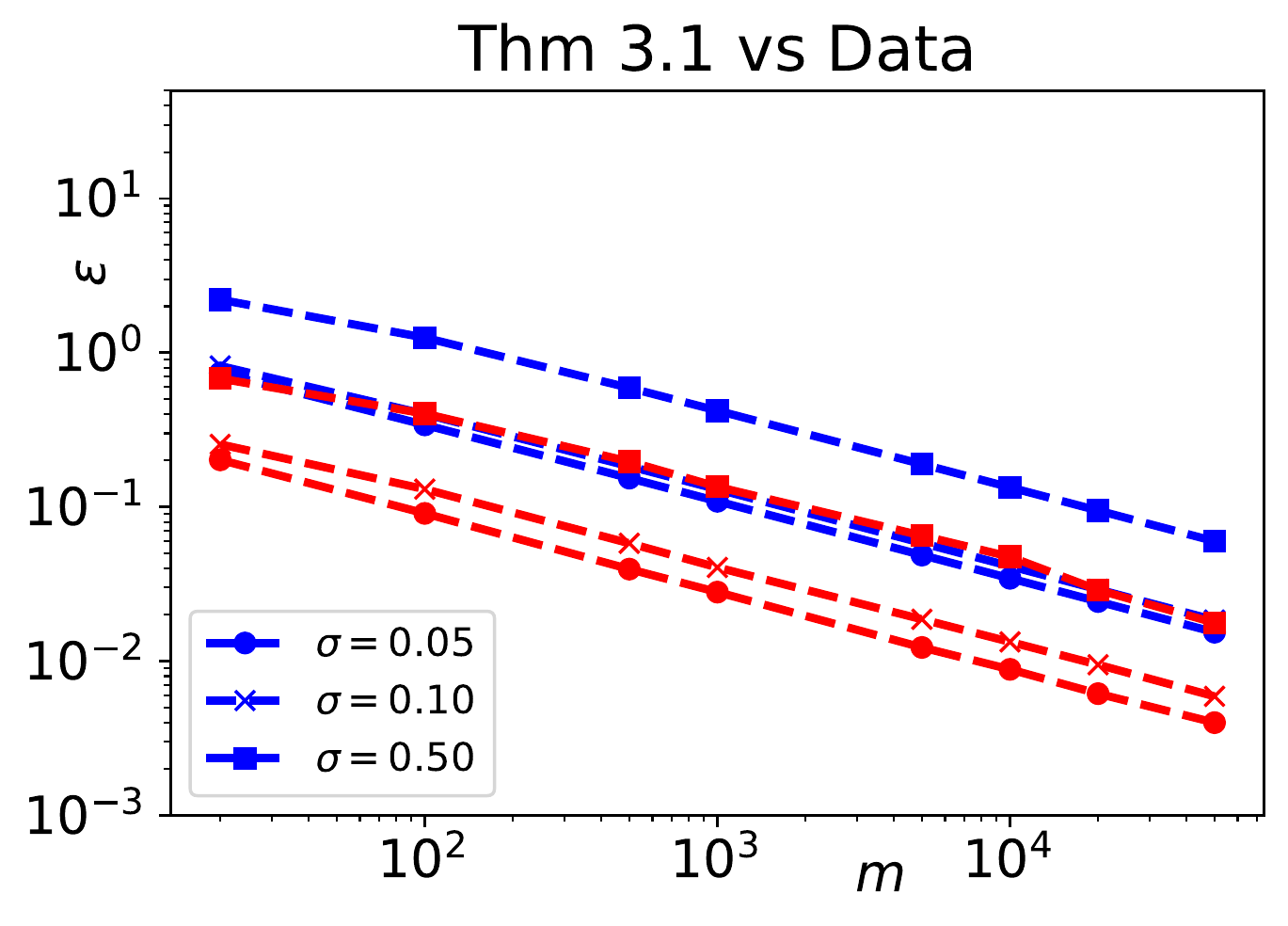}
\includegraphics[width=0.48\textwidth]{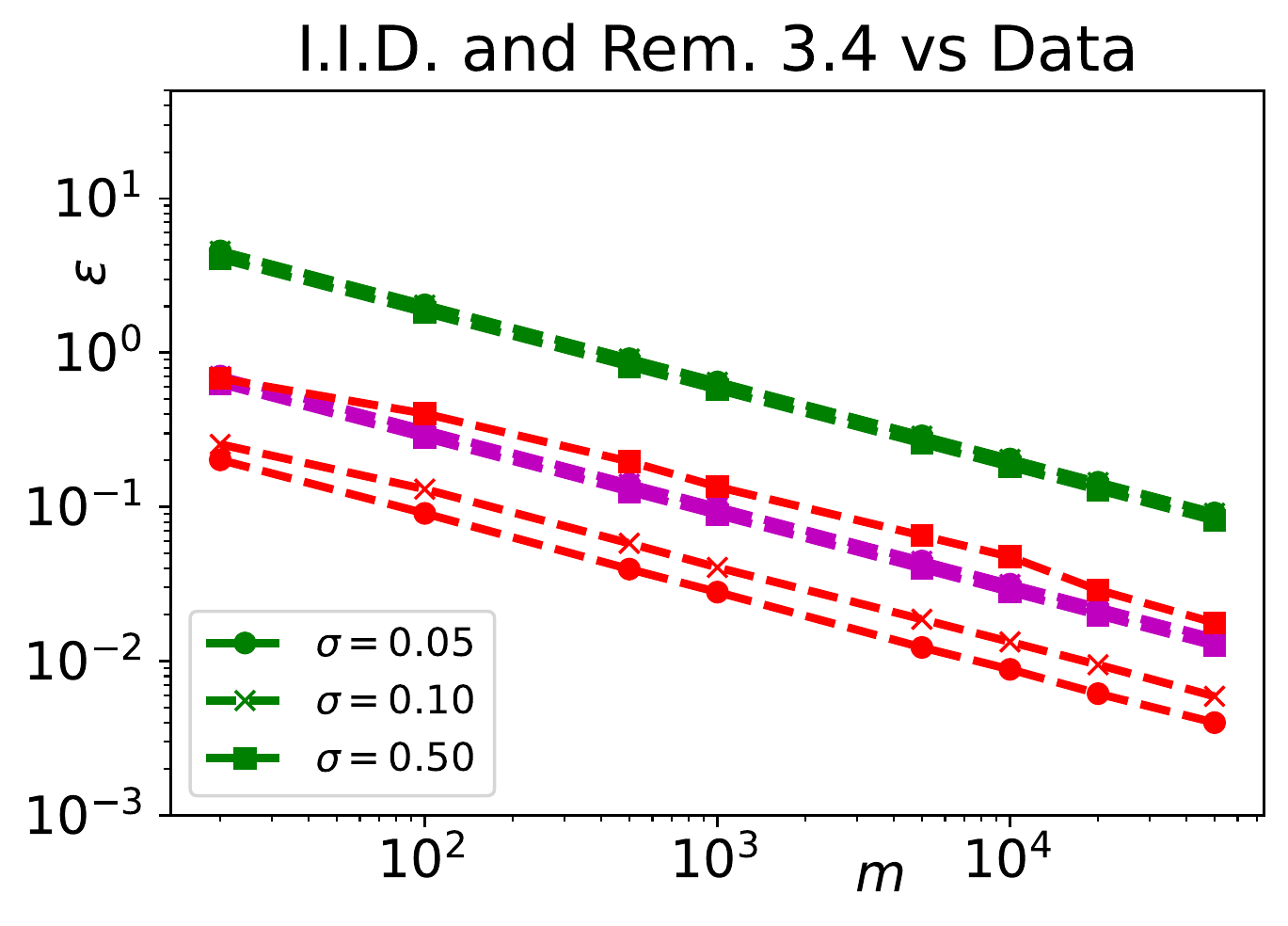}
\caption{Probabilistic error estimates for $C^t_\bH$ associated to the OU process, at lag time $t = 0.05$, and the Gaussian RBF kernel with different bandwidths $\sigma \in \{0.05, 0.1, 0.05 \}$ (indicated by circles, x-es, and squares). We show the estimated error obtained from Proposition~\ref{p:prob_est}, with confidence $1-\delta = 0.9$, using either the exact variance given in Theorem~\ref{t:Eest} (blue), the coarser estimate in Remark~\ref{rem:simple_bound_sigm} (green), or the i.i.d.-variance $\frac{1}{m} \mathbb{E}_0(t)$ (purple). The red curves in both panels shows the $0.9$-percentile of the estimation error based on $200$ independent simulations.}
\label{fig:estimation_error_ou}
\end{figure}
\begin{figure}[htb]
\centering
\includegraphics[width=0.45\textwidth]{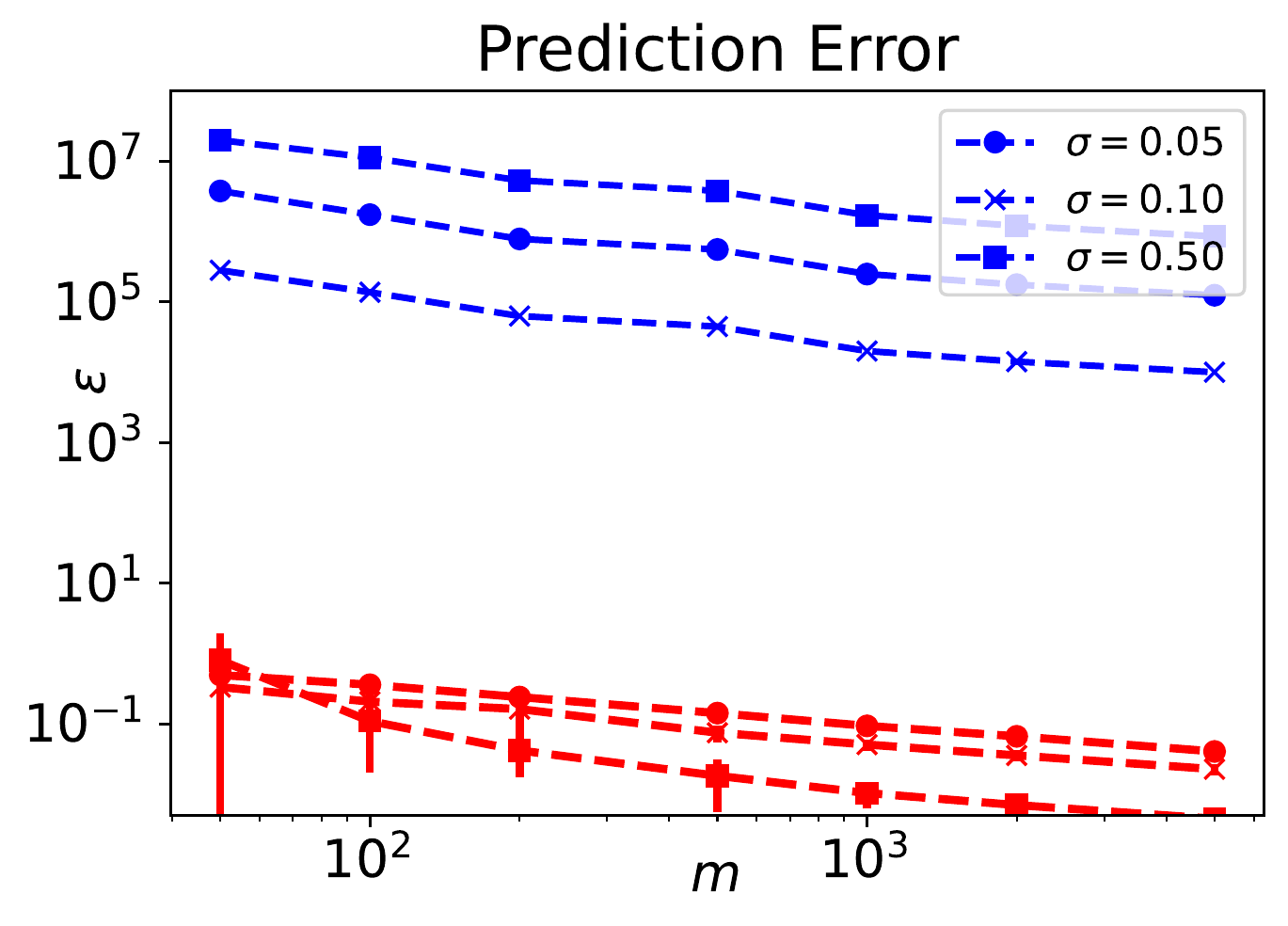}
\caption{Comparison of the theoretical bound on the prediction error $\|K^t_N\phi - \wh{K}^{m,t}_N\phi \|_\mu$, if $\phi$ is chosen as the first Mercer feature $\varphi_0$, using $N = 20$ (for $\sigma = 0.05$) or $N = 10$ (otherwise) in the Mercer series representation. The predicted error is shown in blue, different bandwidths are indicates by circles, x-es and squares. Error bars for the actual error obtained from 20 independent data sets are shown in red.}
\label{fig:prediction_error_ou}
\end{figure}

In a second set of experiments, we test the performance of our theoretical bounds concerning the prediction of expectations for individual observables, obtained in Theorem~\ref{t:main}. For the same three Gaussian RBF kernels as in the first set of experiments, we consider the observable $\phi = \varphi_0$, i.e., the first Mercer feature. As above, we choose $N = 10$ or $N = 20$, depending on the bandwidth, to truncate the Mercer series expansion $K^t_N \phi$ and its empirical approximation $\wh{K}^{m, t}_N \phi$. Note that $\phi$ is a different observable depending on the bandwidth. Again, we set $1- \delta = 0.9$, and use Theorem~\ref{t:main} to bound the $L^2_\mu$-error between $K^t_N \phi$ and $\wh{K}^{m, t}_N\phi$. As a comparison, we compute the actual $L^2_\mu$-error by numerical integration, using the fact that we can evaluate $K^t_N \phi(x)$ and $\wh{K}^{m, t}_N\phi(x)$ at any $x$ based on the discussion above. We repeat this procedure 20 times and provide average errors and standard deviations. The results  for all three kernels are shown in Figure~\ref{fig:prediction_error_ou}, and we find that our theoretical bounds are much too pessimistic in all cases. This finding highlights our previous observation that bounding the prediction error outside the RKHS still requires more in-depth research.

\section{Conclusions}
\label{sec:conclusions}
We have analyzed the finite-data estimation error for data-driven approximations of the Koopman operator on reproducing kernel Hilbert spaces. More specifically, we have provided an exact expression for the variance of empirical estimators for the cross-covariance operator, if a sliding-window estimator is applied to a long ergodic trajectory of the dynamical system (Theorem \ref{t:Eest}). This setting is relevant for many complex systems, such as molecular dynamics simulations. Our results present a significant improvement over the state of the art, since they concern a setting where the notorious problem of dictionary selection can be circumvented, and therefore no longer depend on the dictionary size. We have also extended the concept of asymptotic variance to an infinite-dimensional approximation space for the Koopman operator. Our numerical study on the Ornstein Uhlenbeck process has shown that, even using a simple mass concentration inequality, accurate bounds on the estimation error can be obtained (Figure \ref{fig:estimation_error_ou}).

In our second main contribution, Theorem \ref{t:main}, we have extended our estimates to a uniform bound on the prediction error for observables in the RKHS. Thereby, we have circumvented dealing with an unbounded inverse of the covariance operator by applying a finite-dimensional truncation of the associated Mercer series. In case of Koopman-invariance of the RKHS, we were even able to find a bound on the truncation error which then yields estimates for the full approximation error (Theorem \ref{t:proj_err}). The resulting error bounds have, however, proven very conservative in the numerical examples (Figure \ref{fig:prediction_error_ou}). Therefore, obtaining sharper bounds on the prediction error constitutes a primary goal for future research.

\medskip
\noindent{\bf Acknowledgments.} F. Philipp was funded by the Carl Zeiss Foundation within the project DeepTurb--Deep Learning in and from Turbulence. He was further supported by the free state of Thuringia and the German Federal Ministry of Education and Research (BMBF) within the project THInKI--Th\"uringer Hochschulinitiative für KI im Studium. 
K. Worthmann gratefully acknowledges funding by the German Research Foundation (DFG grant WO 2056/14-1, project no 507037103).

\section*{Data Availability}
Codes and data required for reproducing the numerical results in this study are available at \url{https://zenodo.org/doi/10.5281/zenodo.10044360}.

\bigskip
\appendix
\section{Proofs}\label{a:proofs}
\begin{proof}[Proof of Lemma \ref{l:basics0}]
Let $\psi\in\bH$. Then \eqref{e:bounded_embedding} follows from
$$
\int|\psi(x)|^2\,d\mu(x) = \int|\<\psi,\Phi(x)\>|^2\,d\mu(x)\,\le\,\|\psi\|^2\int\vphi(x)\,d\mu(x) = \|\psi\|^2\|\vphi\|_1.
$$
Assume that (A2) holds and that $\psi\in L^2_\mu(\calX)$ is such that $\<\psi,\Phi(x)\>_\mu = 0$ for all $x\in \calX$. Then
$$
0 = \int\<\psi,\Phi(x)\>_\mu\psi(x)\,d\mu(x) = \int\!\int k(x,y)\psi(x)\psi(y)\,d\mu(x)\,d\mu(y).
$$
Hence, $\psi=0$ by (A2). Conversely, assume that $\bH$ is dense in $L^2_\mu(\calX)$. Let $\psi\in L^2_\mu(\calX)$ such that we have $\int\!\int k(x,y)\psi(x)\psi(y)\,d\mu(x)\,d\mu(y) = 0$. Since the integrand equals $\<\psi(x)\Phi(x),\psi(y)\Phi(y)\>$ and the corresponding integral $\int\psi(x)\Phi(x)\,d\mu(x)$ exists by \eqref{e:calE_bounded}, we obtain $\int\psi(x)\Phi(x)\,d\mu(x) = 0_\bH$. This implies that $\<\psi,\Phi(y)\>_\mu = \int\psi(x)k(x,y)\,d\mu(x) = 0$ for each $y\in \calX$. Hence, $\<\psi,\phi\>_\mu=0$ for each $\phi\in\calH := \linspan\{\Phi(x) : x\in \calX\}$. Now, let $\phi\in\bH$. Then there exists a sequence $(\phi_n)\subset\calH$ such that $\|\phi_n-\phi\|\to 0$ as $n\to\infty$. Therefore,
$$
|\<\psi,\phi\>_\mu| = |\<\psi,\phi - \phi_n\>_\mu|\,\le\,\|\psi\|_\mu\|\phi-\phi_n\|_\mu\,\le\,\|\psi\|_\mu\sqrt{\|\vphi\|_1}\|\phi-\phi_n\|.
$$
Hence, $\<\psi,\phi\>_\mu=0$, and the density of $\bH$ in $L^2_\mu(\calX)$ implies $\psi=0$.
\end{proof}

\begin{proof}[Proof of Lemma \ref{l:ECH}]
(a) For $\psi\in L^2_\mu(\calX)$ we have
\begin{align*}
\|\calE\psi\|^2 = \int\int\psi(x)\psi(y)\<\Phi(x),\Phi(y)\>\,d\mu(x)\,d\mu(y) = \int\int k(x,y)\psi(x)\psi(y)\,d\mu(x)\,d\mu(y).
\end{align*}
Hence, the injectivity of $\calE$ follows from (A2). If $(e_i)$ is an orthonormal basis of $\bH$, then
\begin{align*}
\sum_i\|\calE^*e_i\|_\mu^2
&= \sum_i\|e_i\|_\mu^2 = \sum_i\int|e_i(x)|^2\,d\mu(x) = \sum_i\int|\<\Phi(x),e_i\>|^2\,d\mu(x) = \int\|\Phi(x)\|^2\,d\mu(x).
\end{align*}
The claim is now a consequence of $\|\Phi(x)\|^2 = \vphi(x)$.

(b) By Lemma \ref{l:basics0}, $\bH$ is dense in $L^2_\mu(\calX)$. Moreover, $\calE^*$ is compact by (a) and Schauder's theorem \cite[Theorem 4.19]{rfa}.

(c) This follows from (a) and $\ker C_\bH = \ker\calE\calE^* = \ker\calE^* = \{0\}$ by (A3).
\end{proof}

\begin{proof}[Proof of Theorem \ref{t:mercer}]
By Lemma \ref{l:ECH}, the operator $\calE^*\calE\in\calB(L^2_\mu(\calX))$ is a positive self-adjoint trace-class operator. Hence, by the well known spectral theory of compact operators (see, e.g., \cite{gk}) there exists an orthonormal basis $(e_j)_{j=1}^\infty$ of $L^2_\mu(\calX)$ consisting of eigenfunctions of $\calE^*\calE$ corresponding to a summable sequence $(\la_j)_{j=1}^\infty$ of strictly positive eigenvalues. Since $\calE^*\psi = \psi$ for $\psi\in\bH$, we have $\calE e_j = \la_j e_j$ and thus $e_j\in\bH$ for all $j$ and $C_\bH e_j = \calE\calE^* e_j = \calE e_j = \la_j e_j$. Moreover, $\<f_i,f_j\> = \sqrt{\la_j/\la_i}\<\calE e_i,e_j\> = \sqrt{\la_j/\la_i}\<e_i,e_j\>_\mu = \delta_{ij}$ by \eqref{e:adj} so that the $f_j$ indeed form an orthonormal system in $\bH$. The completeness of $(f_j)$ in $\bH$ follows from the injectivity of $\calE$. Finally, $\sum_{j=1}^\infty\la_j = \Tr C_\bH = \|\vphi\|_1$ and
$$
k(x,y) = \<\Phi(x),\Phi(y)\> = \sum_j\<\Phi(x),f_j\>\<f_j,\Phi(y)\> = \sum_jf_j(x)f_j(y),
$$
which completes the proof.
\end{proof}

\begin{proof}[Proof of Proposition \ref{p:Kbounded}]
Let $\psi\in B(\calX)$. For $p=\infty$ we have $|(K^t\psi)(x)| = |\bE^x[\psi(X_t)]|\le\bE^x[|\psi(X_t)|]\le \|\psi\|_\infty$. If $p<\infty$, by Jensen's inequality, for every convex $\phi : \R\to\R$ we have $\phi\circ K^t\psi\le K^t(\phi\circ\psi)$ and thus $|K^t\psi|^p\le K^t|\psi|^p$, which, by invariance of $\mu$, leads to
$$
\|K^t\psi\|_p^p = \int|K^t\psi|^p\,d\mu\le \int K^t|\psi|^p\,d\mu = \int|\psi|^p\,d\mu = \|\psi\|_p^p.
$$
The claim now follows by density of $B(\calX)$ in $L^p_\mu(\calX)$.
\end{proof}

\begin{proof}[Proof of Proposition \ref{p:semigroup}]
Let $\psi\in C_b(\calX)$ and fix $x\in \calX$. Denote the stochastic solution process of the SDE \eqref{e:SDE} with initial value $x$ by $X_t^x$. Since $X_t^x(\omega)$ is continuous in $t$ for $\bP$-a.e.\ $\omega\in\Omega$ (see \cite[Theorem 5.2.1]{oe}), $\psi(X_t^x(\omega))\to\psi(X_0^x(\omega)) = \psi(x)$ as $t\to 0$ for $\bP$-a.e. $\omega\in\Omega$. Hence, by dominated convergence,
$$
K^t\psi(x) = \bE[\psi(X_t^x)] = \int\psi(X_t^x(\omega))\,d\bP(\omega)\,\to\,\psi(x)
$$
as $t\to 0$. It now follows from Proposition \ref{p:Kbounded} and, again, dominated convergence that $\|K^t\psi - \psi\|_p\to 0$ as $t\to 0$. If $\psi\in L_\mu^p(\calX)$ and $\veps>0$, there exists $\eta\in C_b(\calX)$ such that $\|\psi-\eta\|_p < \veps/3$. Choose $\delta>0$ such that $\|K^t\eta-\eta\|_p < \veps/3$ for $t < \delta$. Then
$$
\|K^t\psi - \psi\|_p\le \|K^t(\psi - \eta)\|_p + \|K^t\eta-\eta\|_p + \|\eta-\psi\|_p < \veps
$$
for $t < \delta$, which proves the claim.
\end{proof}


\section{Some facts from spectral theory}
In this section, let $\calH$ be a Hilbert space. If $P$ is an orthogonal projection in $\calH$, we set $P^\perp = I - P$. For $v\in\calH$, $\|v\|=1$, denote by $P_v$ the rank-one orthogonal projection onto $\linspan\{v\}$.

We say that a linear operator on $\calH$ is {\em non-negative} if it is self-adjoint and its spectrum is contained in $[0,\infty)$. For a non-negative compact operator $T$ on $\calH$ we denote by $\la_1(T)\ge\la_2(T)\ge\dots$ the eigenvalues of $T$ in descending order (counting multiplicities). We set $\la_j(T) = 0$ if $j>\rank(T)$. Moreover, if $T$ has only simple eigenvalues\footnote{i.e., $\dim\ker(T-\la) = 1$ for each eigenvalue $\la$ of $T$}, we let $P_j(T)$ denote the orthogonal projection onto the eigenspace $\ker(T - \la_j(T))$ and $Q_n(T) = \sum_{j=1}^n P_j(T)$ the spectral projection corresponding to the $n$ largest eigenvalues of $T$.

\begin{thm}[{\cite[Cor.\ II.2.3]{gk}}]\label{t:eigs}
If $T$ and $\wh T$ are two non-negative compact operators on $\calH$, then for all $j\in\N$,
$$
|\la_j(T) - \la_j(\wh T)|\,\le\,\|T-\wh T\|.
$$
\end{thm}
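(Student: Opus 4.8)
The plan is to deduce the estimate from the max--min (Courant--Fischer) variational characterization of the eigenvalues of a non-negative compact operator. The first step is to recall (or briefly re-derive from the spectral theorem for compact self-adjoint operators) that for a non-negative compact $T$ on $\calH$ and every $j\in\N$ one has
\[
\la_j(T) \;=\; \max_{\substack{W\subseteq\calH\\ \dim W = j}}\ \min_{\substack{x\in W\\ \|x\|=1}}\ \langle Tx,x\rangle ,
\]
the maximum being attained on the span of eigenvectors belonging to the $j$ largest eigenvalues; the inequality ``$\le$'' here comes from a dimension count showing that any $j$-dimensional $W$ meets the orthogonal complement of the span of the top $j-1$ eigenvectors. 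One must check that this is consistent with the convention $\la_j(T)=0$ for $j>\rank(T)$: since $\calH$ is infinite-dimensional, $\ker T$ has codimension $\rank(T)$, so if $\rank(T)<j$ then every $j$-dimensional $W$ intersects $\ker T$ nontrivially, whence the inner minimum is $0$ for every $W$ and the outer maximum is $0$ as well.

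With the variational formula in hand the argument is short. Put $E:=T-\wh T$, so that $T=\wh T+E$ and $\|E\|=\|T-\wh T\|$. For every unit vector $x$ we have $|\langle Ex,x\rangle|\le\|E\|$, hence
\[
\langle Tx,x\rangle \;=\; \langle \wh Tx,x\rangle + \langle Ex,x\rangle \;\le\; \langle \wh Tx,x\rangle + \|E\| .
\]
Inserting this bound into the max--min expression for $\la_j(T)$ and comparing with the same expression for $\la_j(\wh T)$ yields $\la_j(T)\le\la_j(\wh T)+\|E\|$. Exchanging the roles of $T$ and $\wh T$ gives $\la_j(\wh T)\le\la_j(T)+\|E\|$, and combining the two inequalities proves $|\la_j(T)-\la_j(\wh T)|\le\|T-\wh T\|$.

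The result is elementary, so the only genuinely delicate point is the bookkeeping in the variational principle: one has to state it correctly in the infinite-dimensional setting and make sure the zero-padding convention is respected when an operator has finite rank below $j$, as indicated above. An alternative route that sidesteps the variational principle altogether is to use the identification of $\la_j(T)$ with the $(j-1)$-st approximation number, $\la_j(T)=\min\{\|T-F\| : \rank F\le j-1\}$ (equality of singular values and approximation numbers for compact operators, which for non-negative self-adjoint $T$ reduces to the eigenvalue statement): choosing $F$ to be a best rank-$(j-1)$ approximation of $\wh T$ gives $\la_j(T)\le\|T-F\|\le\|T-\wh T\|+\|\wh T-F\|=\|T-\wh T\|+\la_j(\wh T)$, and one again swaps $T$ and $\wh T$.
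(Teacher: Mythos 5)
Your proposal is correct. Note that the paper does not prove this statement at all: it is quoted directly from Gohberg--Krein \cite[Cor.~II.2.3]{gk}, so there is no in-paper argument to compare against. Your primary route via the Courant--Fischer max--min principle is the standard textbook proof of this Weyl-type perturbation bound, and the details you flag are exactly the right ones: the attainment of the maximum on the span of the top $j$ eigenvectors, the dimension count giving the upper bound for arbitrary $j$-dimensional $W$, and the observation that non-negativity of $T$ forces the inner minimum to equal $0$ (not merely $\le 0$) whenever $W$ meets $\ker T$, which makes the convention $\la_j(T)=0$ for $j>\rank(T)$ consistent. The perturbation step $\<Tx,x\>\le\<\wh Tx,x\>+\|T-\wh T\|$ followed by symmetrization is airtight. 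Your ``alternative route'' via approximation numbers, $\la_j(T)=\min\{\|T-F\| : \rank F\le j-1\}$, is in fact essentially the argument Gohberg and Krein themselves use for s-numbers (of which this corollary is the self-adjoint non-negative special case), so that variant is the one closest in spirit to the cited source; the max--min version buys you a fully self-contained elementary proof that avoids invoking the equality of eigenvalues, singular values, and approximation numbers.
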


\begin{lem}\label{l:proj_diff}
For $v,w\in\calH$ with $\|v\|=\|w\|=1$ we have
\begin{equation}\label{e:proj}
\|P_v - P_w\| = \|P_w^\perp P_v\| = \sqrt{1 - |\<v,w\>|^2}.
\end{equation}
\end{lem}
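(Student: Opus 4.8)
The plan is to reduce everything to a computation on the subspace $V := \linspan\{v,w\}$, which has dimension at most two, since both $P_v - P_w$ and $P_w^\perp P_v$ have range contained in $V$ and vanish on $V^\perp$. If $v$ and $w$ are linearly dependent, then $|\<v,w\>| = 1$, $P_v = P_w$, and $P_w^\perp P_v = 0$, so all three expressions in \eqref{e:proj} equal zero; hence I may assume that $v$ and $w$ are linearly independent and $\dim V = 2$.

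For the right-hand equality, I would observe that $P_w^\perp P_v x = \<x,v\>\,P_w^\perp v$ for every $x \in \calH$, so that $P_w^\perp P_v$ is a rank-one operator of the form $x \mapsto \<x,v\> b$ with $b = P_w^\perp v$, whose operator norm equals $\|v\|\,\|b\| = \|P_w^\perp v\|$. By the Pythagorean theorem, $\|P_w^\perp v\|^2 = \|v\|^2 - \|P_w v\|^2 = 1 - |\<v,w\>|^2$, which gives $\|P_w^\perp P_v\| = \sqrt{1 - |\<v,w\>|^2}$.

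For the operator $D := P_v - P_w$, the idea is to use that $D$ restricts to a self-adjoint operator on the two-dimensional space $V$ and vanishes on $V^\perp$, so $\|D\| = \|D|_V\|$. Since $\Tr(D|_V) = \Tr(P_v|_V) - \Tr(P_w|_V) = 1 - 1 = 0$, the two eigenvalues of $D|_V$ are $\pm\lambda$ for some $\lambda \ge 0$, whence $\|D\| = \lambda$ and $2\lambda^2 = \Tr(D^2)$. Expanding $D^2 = P_v + P_w - P_v P_w - P_w P_v$ and using $\Tr(P_v P_w) = \Tr(P_w P_v) = |\<v,w\>|^2$ (the trace of the rank-one operator $x \mapsto \<x,w\>\<w,v\>v$), one obtains $\Tr(D^2) = 2 - 2|\<v,w\>|^2$, so $\lambda = \sqrt{1 - |\<v,w\>|^2}$, completing the proof.

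The only point requiring care is the bookkeeping of the two reductions — verifying that $D$ indeed preserves $V$ and annihilates $V^\perp$, and that the traces of the products of rank-one operators are computed correctly; I do not expect any substantial obstacle beyond this.
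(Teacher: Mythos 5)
Your proof is correct. The second equality is obtained exactly as in the paper: $P_w^\perp P_v$ is the rank-one map $x\mapsto\<x,v\>P_w^\perp v$, and Pythagoras gives $\|P_w^\perp v\|^2=1-|\<v,w\>|^2$. For the first equality, however, you take a genuinely different route. The paper also reduces to the two-dimensional subspace $\linspan\{v,w\}$, but then writes a generic unit vector as $x=av+bw$ and verifies by a direct coordinate expansion that $\|(P_v-P_w)x\|^2=1-|\<v,w\>|^2$ identically on the unit sphere of that subspace. You instead exploit the structure of $D=P_v-P_w$ as a self-adjoint operator on a two-dimensional invariant subspace: $\Tr(D|_V)=0$ forces the eigenvalues to be $\pm\la$, and $2\la^2=\Tr(D^2)=2-2|\<v,w\>|^2$ via $\Tr(P_vP_w)=|\<v,w\>|^2$. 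This is cleaner and avoids the algebraic bookkeeping of the paper's expansion; it also works verbatim over $\C$, whereas the paper's computation with $\gamma=\<v,w\>$ is written for real scalars. Note that your argument recovers the same extra information as the paper's (since the eigenvalues are $\pm\la$, one gets $\|Dx\|=\la\|x\|$ for all $x\in V$, i.e., the norm is attained everywhere on the unit sphere of $V$). The two reductions you flag — that $D$ maps into $V$ and annihilates $V^\perp$, and the trace of the rank-one product — are both routine and hold as you state them, so there is no gap.
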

\begin{proof}
First of all, the second equation in \eqref{e:proj} is clear, since
$$
\|P_w^\perp P_vf\|^2 = \|\<f,v\>P_w^\perp v\|^2 = |\<f,v\>|^2(1 - \|P_wv\|^2) = |\<f,v\>|^2(1 - |\<v,w\>|^2).
$$
Second, if $P_{v,w}$ denotes the orthogonal projection onto $\calH_{v,w} := \linspan\{v,w\}$, we have
$$
\|P_v - P_w\| = \|(P_v - P_w)P_{v,w}\| = \|(P_v - P_w)|_{\calH_{v,w}}\| = \sup_{x\in\calH_{v,w},\,\|x\|=1}\|(P_v-P_w)x\|,
$$
which is a two-dimensional problem in $\calH_{v,w}$. Now, if $x\in\calH_{v,w}$, $\|x\|=1$, we write $x = av+bw$ and obtain $a^2+2ab\gamma + b^2 = 1$, where $\gamma = \<v,w\>$. Moreover, $\<x,v\> = a+b\gamma$, $\<x,w\>=a\gamma + b$ and so
\begin{align*}
\|(P_v - P_w)x\|^2
&= \|\<x,v\>v - \<x,w\>w\|^2 = \|(a+b\gamma)v - (a\gamma+b)w\|^2\\
&= (a+b\gamma)^2 - 2(a+b\gamma)(a\gamma+b)\gamma + (a\gamma+b)^2\\
&= a^2+2ab\gamma + b^2\gamma^2 - 2\gamma(a^2\gamma + ab\gamma^2 + ab + b^2\gamma) + a^2\gamma^2 + 2ab\gamma + b^2\\
&= (1-\gamma^2)a^2 + 2ab\gamma - 2ab\gamma^3 + b^2(1-\gamma^2)\\
&= (1-\gamma^2)(a^2+b^2+2ab\gamma)\\
&= 1 - |\<v,w\>|^2.
\end{align*}
Hence, the objective function is constant on $\{x\in\calH_{v,w} : \|x\|=1\}$ and \eqref{e:proj} is proved.
\end{proof}

The next theorem is a variant of the Davis-Kahan $\sin(\Theta)$ theorem (cf.\ \cite{yws}).

\begin{thm}\label{t:dk_special}
Let $T$ and $\wh T$ be non-negative Hilbert-Schmidt operators on $\calH$, let $n\in\N$, assume that the largest $n+1$ eigenvalues of $T$ are simple, and set
$$
\delta = \min_{j=1,\ldots, n}\frac{\la_j(T) - \la_{j+1}(T)}2.
$$
If $\|T - \wh T\|_{HS} < \delta$, then for $j=1,\ldots,n$ we have
$$
\|P_j(T) - P_j(\wh T)\|\le\frac{\|T - \wh T\|}{\delta}.
$$
\end{thm}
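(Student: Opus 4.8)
The plan is to fix $j\in\{1,\dots,n\}$, reduce the inequality to an estimate on a single vector via Lemma~\ref{l:proj_diff}, and then exploit that the self-adjoint operator $\wh T$ leaves the orthogonal complement of its $j$-th eigenvector invariant. First I would record eigenvalue stability: by Theorem~\ref{t:eigs}, $|\la_i(T)-\la_i(\wh T)|\le\|T-\wh T\|\le\|T-\wh T\|_{HS}<\delta$ for every $i$. Since $\la_j(T)-\la_{j+1}(T)\ge 2\delta$ (as $j\le n$), $\la_{j-1}(T)-\la_j(T)\ge 2\delta$ when $2\le j\le n$, and $\la_{j+1}(T)\ge 0$, this gives $\la_j(\wh T)>\la_j(T)-\delta\ge\delta>0$, $\la_j(\wh T)-\la_{j+1}(\wh T)\ge 2\delta-2\|T-\wh T\|>0$, and (for $j\ge 2$) $\la_{j-1}(\wh T)-\la_j(\wh T)>0$. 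Hence $\la_j(\wh T)$ is a simple, strictly positive eigenvalue of $\wh T$, so $P_j(\wh T)$ is a genuine rank-one orthogonal projection. I would then pick unit vectors $v,w$ with $Tv=\la_j(T)v$ and $\wh Tw=\la_j(\wh T)w$, so $P_j(T)=P_v$, $P_j(\wh T)=P_w$, and by Lemma~\ref{l:proj_diff} reduce the goal to proving $\|u\|\le\|T-\wh T\|/\delta$, where $u:=P_w^\perp v$, because $\|P_j(T)-P_j(\wh T)\|=\|P_v-P_w\|=\|P_w^\perp P_v\|=\|P_w^\perp v\|$.

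Next I would use the invariant-subspace identity. As $\wh T$ is self-adjoint with eigenvector $w$, it maps $\{w\}^\perp$ into itself; set $A:=\wh T|_{\{w\}^\perp}$, a self-adjoint operator on $\{w\}^\perp$. Writing $v=\<v,w\>w+u$, using $Tv=\la_j(T)v$ and $P_w^\perp\wh Tv=P_w^\perp\big(\<v,w\>\la_j(\wh T)w+\wh Tu\big)=\wh Tu=Au$, I obtain
$$
P_w^\perp(T-\wh T)v=\la_j(T)\,u-Au=(\la_j(T)I-A)u,
$$
and therefore $\|(\la_j(T)I-A)u\|=\|P_w^\perp(T-\wh T)v\|\le\|T-\wh T\|\,\|v\|=\|T-\wh T\|$.

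Finally I would establish the spectral gap $\dist(\la_j(T),\sigma(A))\ge\delta$. The spectrum of $A$ is contained in $\{0\}\cup\{\la_i(\wh T):i\neq j\}$; for $i>j$ one has $\la_i(\wh T)\le\la_{j+1}(\wh T)\le\la_{j+1}(T)+\|T-\wh T\|$, so $\la_j(T)-\la_i(\wh T)\ge 2\delta-\|T-\wh T\|>\delta$; for $i<j$ (hence $j\ge 2$) one has $\la_i(\wh T)\ge\la_{j-1}(\wh T)\ge\la_{j-1}(T)-\|T-\wh T\|$, so $\la_i(\wh T)-\la_j(T)\ge 2\delta-\|T-\wh T\|>\delta$; and $\la_j(T)-0=\la_j(T)\ge 2\delta>\delta$. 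By the spectral theorem for self-adjoint operators, $(\la_j(T)I-A)^{-1}$ is then bounded on $\{w\}^\perp$ with norm at most $1/\delta$, i.e.\ $\|(\la_j(T)I-A)u\|\ge\delta\|u\|$; combined with the previous step this yields $\delta\|u\|\le\|T-\wh T\|$, which is exactly the claimed bound.

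The computation in the second paragraph is the conceptual heart and is very short; I expect the only genuine care to be needed in the eigenvalue bookkeeping of the first and third paragraphs — verifying that the top $n$ eigenvalues of $\wh T$ stay simple and isolated, and that the perturbed spectrum remains at distance at least $\delta$ from $\la_j(T)$ — all of which follows directly from the stability estimate $|\la_i(T)-\la_i(\wh T)|\le\|T-\wh T\|_{HS}$ together with the definition of $\delta$.
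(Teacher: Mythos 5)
Your argument is correct and is essentially the paper's own proof: both use Theorem~\ref{t:eigs} to confine the perturbed eigenvalues, deduce the gap $\dist(\la_j(T),\sigma(\wh T)\setminus\{\la_j(\wh T)\})\ge\delta$, exploit the identity $P_w^\perp(\wh T-T)v=(\wh T-\la_j(T))P_w^\perp v$ on the $\wh T$-invariant subspace $\{w\}^\perp$, and finish with Lemma~\ref{l:proj_diff}. The only difference is notational — you phrase the key computation in terms of the unit eigenvector $v$ rather than the rank-one projection $P_j(T)$, which is the same thing.
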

\begin{proof}
For $j\in\N$ put $\la_j = \la_j(T)$, $P_j = P_j(T)$, $\wh\la_j = \la_j(\wh T)$, and $\wh P_j = P_j(\wh T)$. By Theorem \ref{t:eigs}, we have $|\la_j - \wh\la_j|\le\|T-\wh T\|_{HS} < \delta$ for all $j$, hence $\wh\la_j$ is contained in the interval $I_j = (\la_j-\delta,\la_j+\delta)$ for $j=1,\ldots,n+1$. By assumption, $\sup I_{j+1}\le\inf I_j$ for $j=1,\ldots,n$. In particular, the intervals $I_1,\ldots,I_{n+1}$ are pairwise disjoint.

Now, let $j\in\{1,\ldots,n\}$. Then for $k\in\N\setminus\{j\}$ we have $|\wh\la_k - \la_j| > \delta$. Therefore, we have $\dist(\la_j,\sigma(\wh T)\backslash\{\wh\la_j\})\ge\delta$ and thus, for $f\in\wh P_j^\perp\calH$,
$$
\|(\wh T-\la_j)f\|\,\ge\,\dist\big(\la_j,\sigma(\wh T|_{\wh P_j^\perp\calH})\big)\|f\| = \dist(\la_j,\sigma(\wh T)\backslash\{\wh\la_j\})\|f\|\,\ge\,\delta\|f\|.
$$
As $TP_j = \la_jP_j$ and $\wh P_j^\perp\wh T = \wh T\wh P_j^\perp$, we obtain
$$
\|T - \wh T\|\,\ge\,\|\wh P_j^\perp(\wh T-T)P_j\| = \|\wh P_j^\perp\wh TP_j - \wh P_j^\perp TP_j\| = \|(\wh T - \la_j)\wh P_j^\perp P_j\|\,\ge\,\delta\|\wh P_j^\perp P_j\|.
$$
The claim now follows from Lemma \ref{l:proj_diff}.
\end{proof}

\section{The adjoint of the Koopman operator}\label{a:perron}
For $p\in [1,\infty)$ let $K_p^t : L^p_\mu\to L^p_\mu$ denote the Koopman operator on $L^p_\mu$. If $p=2$, we still write $K_2^t = K^t$. For a Borel set $A\in\frakB(\R^d)$ and $f\in L^q_\mu(\R^d)$ we have
\[
\int_A (K_p^t)^*f\,d\mu = \<(K_p^t)^*f,\one_A\>_{L_\mu^q,L_\mu^p} = \<f,K_p^t\one_A\>_{L_\mu^q,L_\mu^p} = \int\rho_t(x,A)f(x)\,d\mu(x).
\]
Since the right-hand side also makes sense for $f\in L^1_\mu$, the operator $(K^t)^*$ extends to a bounded operator $P^t$ on $L^1_\mu$.  From the above defining identity, it is readily seen that $P^t$ is a Markov operator, i.e., $P^t\one = \one$ and $P^tf\ge 0$ if $f\ge 0$.

Let $f$ be a simple function, i.e., $f = \sum_{i=1}^n a_i\one_{A_i}$, where the $A_i$ are mutually disjoint and $\bigcup_{i=1}^nA_i = \R^d$. Then $\sum_{i=1}^nP^t\one_{A_i} = (K^t)^*\one = \one$, hence, by convexity of $z\mapsto z^2$,
\[
P^tf^2(x) = \sum_{i=1}^na_i^2P^t\one_{A_i}(x)\,\ge\,\Big(\sum_{i=1}^na_iP^t\one_{A_i}(x)\Big)^2 = [P^tf(x)]^2,
\]
and therefore $P^tf^2\ge [P^tf]^2$. Similarly, $P^t|f|\ge|P^tf|$, which shows $\|P^t\|\le 1$. If $f\in L^2_\mu$, let $(f_n)$ be a sequence of simple functions such that $\|f_n-f\|_\mu\to 0$ as $n\to\infty$. Then
\begin{align*}
\int_A ([(K^t)^*f]^2 - (K^t)^*f^2)\,d\mu
&\le\int_A ([P^tf]^2 - [P^tf_n]^2)\,d\mu + \int_A (P^tf_n^2 - P^tf^2)\,d\mu\\
&\le \int P^t(f-f_n)P^t(f+f_n)\,d\mu + \|P^t(f_n^2-f^2)\|_{L^1_\mu}\\
&\le \|(K^t)^*(f_n-f)\|_\mu\|(K^t)^*(f_n+f)\|_\mu + \|f_n^2-f^2\|_{L^1_\mu}\\
&\le 2\|f_n-f\|_\mu\|f_n+f\|_\mu
\end{align*}
for every $A\in\frakB(\R^d)$. This proves $[(K^t)^*f]^2\le (K^t)^*f^2$ $\mu$-a.e.\ for all $f\in L^2_\mu(\R^d)$.

\begin{lem}\label{l:geht}
Let $f,g,h\in L^2_\mu$ such that $g^2(K^t)^*f^2\in L^1_\mu$. Then $f\cdot K_1^t(gh)\in L^1_\mu$ and
\[
\int f\cdot K_1^t(gh)\,d\mu = \int [g\cdot(K^t)^*f]\cdot h\,d\mu.
\]
\end{lem}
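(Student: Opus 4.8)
The statement is the rigorous form of the formal adjoint identity $\langle f,K^t(gh)\rangle_\mu=\langle (K^t)^*f,gh\rangle_\mu$; since $gh$ lies only in $L^1_\mu$ (it is a product of two $L^2_\mu$-functions), the real work consists in justifying the integrability that makes both sides, and a change of order of integration, legitimate. The plan is to pass through the joint law $\mu_{0,t}$ on $\calX\times\calX$, defined by $\mu_{0,t}(A\times B)=\int_A\rho_t(x,B)\,d\mu(x)$, whose two marginals both equal $\mu$: the $x$-marginal trivially, the $y$-marginal by invariance of $\mu$. Recalling from Appendix~C that $P^t:=(K^t)^*$, extended to $L^1_\mu$, is characterized by $\int_A P^tv\,d\mu=\int\rho_t(x,A)v(x)\,d\mu(x)$, one sees at once that $(P^tv)(y)$ is a version of the conditional expectation of $v(x)$ given the second coordinate under $\mu_{0,t}$.

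First I would establish the integrability estimate $\iint|f(x)g(y)h(y)|\,d\mu_{0,t}(x,y)<\infty$, which simultaneously yields $f\cdot K_1^t(gh)\in L^1_\mu$ and makes Fubini applicable below. Bounding the inner integral $\int|g(y)h(y)|\,\rho_t(x,dy)$ by Cauchy--Schwarz for the probability measure $\rho_t(x,\cdot)$, and then applying Cauchy--Schwarz once more in $x$ against $\mu$, gives
$$
\iint|f(x)g(y)h(y)|\,d\mu_{0,t}\;\le\;\Big(\int f^2\,K_1^tg^2\,d\mu\Big)^{1/2}\Big(\int K_1^th^2\,d\mu\Big)^{1/2}.
$$
Here $\int K_1^th^2\,d\mu=\|h\|_2^2$ by invariance of $\mu$, and for the remaining factor one has, all integrands being nonnegative, $\int f^2\,K_1^tg^2\,d\mu=\iint f(x)^2g(y)^2\,d\mu_{0,t}=\int g^2\cdot(K^t)^*f^2\,d\mu=\langle (K^t)^*f^2,g^2\rangle_\mu$, which is finite by hypothesis. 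This closes the integrability step.

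With integrability in hand, Fubini and the kernel form of $\mu_{0,t}$ give $\int f\cdot K_1^t(gh)\,d\mu=\iint f(x)g(y)h(y)\,d\mu_{0,t}(x,y)$; conditioning on the second coordinate and pulling out the factor $g(y)h(y)$, which is measurable in $y$ (permissible because $f(x)g(y)h(y)\in L^1(\mu_{0,t})$), identifies this with $\int g(y)h(y)\,(P^tf)(y)\,d\mu(y)$. Since $f\in L^2_\mu$ we have $P^tf=(K^t)^*f$ $\mu$-a.e., and hence $\int f\cdot K_1^t(gh)\,d\mu=\int[g\cdot(K^t)^*f]\cdot h\,d\mu$, which is the claim.

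The main obstacle is the integrability step, and within it the identity $\int f^2\,K_1^tg^2\,d\mu=\langle (K^t)^*f^2,g^2\rangle_\mu$: this is precisely where the somewhat unusual hypothesis $g^2(K^t)^*f^2\in L^1_\mu$ is needed, and it relies on reading the kernel measure $\mu_{0,t}$ ``backwards'', i.e.\ on the characterization of $P^t$ as a conditional expectation, equivalently on a regular conditional distribution of the first coordinate given the second, which exists since $\calX\subseteq\R^d$ is standard Borel. Once this and the integrability bound are in place, the remaining steps are routine uses of Cauchy--Schwarz, Fubini, and the elementary properties of conditional expectation.
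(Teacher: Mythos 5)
Your proof is correct, but it follows a genuinely different route from the paper's. The paper approximates $f$ by bounded simple functions $f_n$ with $|f_n|\nearrow|f|$ pointwise, uses the elementary duality $\int f_n\cdot K_1^t(gh)\,d\mu=\int[g\cdot(K_1^t)^*f_n]\cdot h\,d\mu$ valid for bounded $f_n$, extracts a subsequence along which $(K^t)^*f_n\to(K^t)^*f$ $\mu$-a.e., and then passes to the limit by monotone convergence (to get $f\cdot K_1^t(gh)\in L^1_\mu$, bounding by $\int|g|\,(K^t)^*|f|\,|h|\,d\mu$, which is finite via $[(K^t)^*|f|]^2\le(K^t)^*f^2$ and Cauchy--Schwarz) and by dominated convergence (to get the identity). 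You instead work directly on the product space with the joint law $\mu_{0,t}$: the double Cauchy--Schwarz bound
\[
\iint|f(x)g(y)h(y)|\,d\mu_{0,t}\le\Big(\int f^2\,K_1^tg^2\,d\mu\Big)^{1/2}\|h\|_\mu=\Big(\int g^2\,(K^t)^*f^2\,d\mu\Big)^{1/2}\|h\|_\mu
\]
settles integrability in one stroke, and the identity then follows from Tonelli/Fubini together with the observation that $P^t f=(K^t)^*f$ is a version of the conditional expectation of the first coordinate functional $f(x)$ given the second coordinate under $\mu_{0,t}$ (whose $y$-marginal is $\mu$ by invariance), combined with the ``pull out what is known'' property for the $\sigma(Y)$-measurable factor $g(y)h(y)$. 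Both arguments invoke the hypothesis $g^2(K^t)^*f^2\in L^1_\mu$ at the same point, namely to make the dominating integral finite; yours uses it through the exact Tonelli identity $\int f^2K_1^tg^2\,d\mu=\int g^2(K^t)^*f^2\,d\mu$, the paper's through the pointwise Jensen-type inequality from \ref{a:perron}. Your approach buys a cleaner limit-free argument (no subsequence extraction for a.e.\ convergence) at the cost of having to justify the conditional-expectation identity $\iint w(y)f(x)\,d\mu_{0,t}=\int w\cdot P^tf\,d\mu$ for the merely integrable weight $w=gh$ — which you correctly flag, and which reduces to the standard monotone-class/truncation argument; the remark about regular conditional distributions is not actually needed, since abstract conditional expectation with respect to the second-coordinate $\sigma$-algebra suffices.
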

\begin{proof}
Let $(f_n)\subset L^\infty_\mu$ be a sequence of simple functions such that $f_n\to f$ and $|f_n|\nearrow|f|$ pointwise $\mu$-a.e.\ as $n\to\infty$. Then
\[
\int|(K^t)^*(f_n-f)|\,d\mu\,\le\,\int(K^t)^*|f_n-f|\,d\mu = \int|f_n-f|\,d\mu,
\]
which converges to zero as $n\to\infty$ by majorized convergence ($|f_n-f|\le 2|f|$). Hence, there exists a subsequence $(f_{n_k})$ such that $(K^t)^*f_{n_k}\to (K^t)^*f$ $\mu$-a.e.\ as $k\to\infty$. WLOG, we may therefore assume that $(K^t)^*f_n\to (K^t)^*f$ $\mu$-a.e.\ as $n\to\infty$. By monotone convergence,
\begin{align*}
\int |f||K_1^t(gh)|\,d\mu
&= \lim_{n\to\infty}\int |f_n||K_1^t(gh)|\,d\mu\le \limsup_{n\to\infty}\int |f_n|\cdot K_1^t[|gh|]\,d\mu\\
&= \limsup_{n\to\infty}\int (K_1^t)^*|f_n|\cdot |gh|\,d\mu \,\le\, \int |g|(K^t)^*|f|\cdot |h|\,d\mu,
\end{align*}
which is a finite number. Hence, indeed, $f\cdot K_1^t(gh)\in L^1_\mu$ and by majorized convergence,
\begin{align*}
\int f\cdot K_1^t(gh)\,d\mu
&= \lim_{n\to\infty}\int f_n\cdot K_1^t(gh)\,d\mu = \lim_{n\to\infty}\int [g\cdot(K_1^t)^*f_n]\cdot h\,d\mu = \int [g\cdot(K^t)^*f]\cdot h\,d\mu,
\end{align*}
as claimed.
\end{proof}

\section{Ergodicity and the Koopman semigroup}\label{a:ergodicity}
In this section, we prove the following proposition on the spectral properties of the generator $\calL$ under the ergodicity assumption.

\begin{prop}\label{p:L_and_ergo}
Assume that the invariant measure $\mu$ is ergodic. Then $\ker(I - K^t) = \linspan\{\one\}$ for all $t>0$. In particular, $\ker\calL = \linspan\{\one\}$ and $\ker(\calL - i\omega I) = \{0\}$ for $\omega\in\R\backslash\{0\}$.
\end{prop}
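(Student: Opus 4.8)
The plan is to establish $\ker(I-K^t)=\linspan\{\one\}$ for an arbitrary fixed $t>0$ and then read off the two ``in particular'' assertions. The inclusion $\linspan\{\one\}\subseteq\ker(I-K^t)$ is immediate from $K^t\one=\one$. For the converse, let $\psi\in L^2_\mu(\R^d)$ with $K^t\psi=\psi$; since $K^t$ has a real kernel $\rho_t$ it commutes with complex conjugation, so $\overline\psi$ and hence $\Re\psi,\Im\psi$ are fixed as well, and I may assume $\psi$ real-valued. The key step is to observe that, with $X_0\sim\mu$ and the natural filtration $(\calF_k)$, the sequence $M_k:=\psi(X_{kt})$ is an $L^2$-martingale: $\bE[M_{k+1}\mid\calF_k]=(K^t\psi)(X_{kt})=\psi(X_{kt})=M_k$ a.s.\ by the Markov property and $K^t\psi=\psi$. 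As $\bE[M_k^2]=\|\psi\|_\mu^2$ is independent of $k$ by stationarity and $\bE[M_kM_{k+1}]=\bE[M_k^2]$ by the tower property, we get $\bE[(M_{k+1}-M_k)^2]=\bE[M_{k+1}^2]-\bE[M_k^2]=0$, i.e.\ $\psi(X_{kt})=\psi(X_0)$ a.s.\ for every $k$. Equivalently, for each $c\in\R$ the level set $B_c:=\{\psi>c\}$ satisfies $\rho_t(x,B_c)=\one_{B_c}(x)$ for $\mu$-a.e.\ $x$, and (iterating $K^t$) $\rho_{kt}(x,B_c)=\one_{B_c}(x)$ for $\mu$-a.e.\ $x$ and every $k\ge 1$.

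From here there are two ways to finish. The quick one: $\psi(X_{kt})=\psi(X_0)$ for all $k$ a.s.\ makes $\psi(X_0)$ measurable with respect to the invariant $\sigma$-field of the stationary chain, which is $\bP$-trivial under the standing ergodicity assumption (this is precisely what renders the Birkhoff theorem of Section~\ref{subsec:sde} applicable), so $\psi(X_0)$, and hence $\psi$, is $\mu$-a.e.\ constant. The fully elementary route, using only the pointwise definition of ergodicity: put $A_c:=\bigcap_{k\ge 1}\{x:\rho_{kt}(x,B_c)=1\}$. Then $\mu(A_c\,\triangle\,B_c)=0$, and $A_c$ is genuinely $\rho_t$-invariant — for $x\in A_c$ the Chapman--Kolmogorov identity $1=\rho_{(k+1)t}(x,B_c)=\int\rho_{kt}(y,B_c)\,\rho_t(x,dy)$ together with $\rho_{kt}(\cdot,B_c)\le 1$ forces $\rho_{kt}(y,B_c)=1$ for $\rho_t(x,\cdot)$-a.e.\ $y$ and every $k$, i.e.\ $\rho_t(x,A_c)=1$. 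Ergodicity then gives $\mu(B_c)=\mu(A_c)\in\{0,1\}$ for all $c$, so $\psi$ is $\mu$-a.e.\ constant. (Alternatively, one may skip the martingale step and deduce directly from Jensen's inequality for $K^t$ and invariance of $\mu$ that $g\circ\psi$ is fixed for every convex $g$ with $g\circ\psi\in L^1_\mu$, then take $g(x)=(x-c)^+$ and pass to difference quotients in $c$ to see that $\one_{B_c}$ is fixed.) In all cases $\ker(I-K^t)=\linspan\{\one\}$.

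For the remaining assertions I would use elementary $C_0$-semigroup calculus. If $\psi\in\dom\calL$ with $\calL\psi=0$, then $\tfrac{d}{dt}K^t\psi=K^t\calL\psi=0$, hence $K^t\psi=\psi$ for all $t\ge 0$, so $\ker\calL\subseteq\bigcap_{t>0}\ker(I-K^t)=\linspan\{\one\}$; the reverse inclusion is Lemma~\ref{l:one}. If $\omega\in\R\setminus\{0\}$ and $\psi\in\dom\calL$ with $\calL\psi=i\omega\psi$, then $\tfrac{d}{dt}\big(e^{-i\omega t}K^t\psi\big)=e^{-i\omega t}\big(K^t\calL\psi-i\omega K^t\psi\big)=0$, so $K^t\psi=e^{i\omega t}\psi$ for all $t\ge 0$; choosing $t_0:=2\pi/|\omega|>0$ makes $e^{i\omega t_0}=1$, whence $\psi\in\ker(I-K^{t_0})=\linspan\{\one\}$. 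Writing $\psi=c\one$, the eigenrelation forces $i\omega c\,\one=\calL(c\one)=0$, so $c=0$ and $\psi=0$; thus $\ker(\calL-i\omega I)=\{0\}$.

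The only step requiring genuine care is the passage, in the elementary argument, from a level set that is invariant for $\mu$-almost every starting point to a Borel set that is invariant for \emph{every} point it contains, as the definition of ergodicity used here demands; equivalently, in the quick route, the identification of the invariant $\sigma$-field of the chain as $\bP$-trivial. Both are standard, and the latter is exactly the fact the paper already invokes when asserting that the Birkhoff theorem applies. Everything else — the martingale/Jensen computation, invariance of $\mu$, and the semigroup identities — is routine bookkeeping.
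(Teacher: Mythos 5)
Your proof is correct, and it takes a genuinely different route from the paper's. The paper extends $(X_{kt})$ to a two-sided stationary path process, invokes the equivalence of ergodicity of $\mu$ with triviality of the shift-invariant $\sigma$-algebra on path space (a result cited from Hairer's lecture notes), and then applies Birkhoff's ergodic theorem to identify $\psi=\frac1n\sum_{k}K^{kt}\psi$ with the constant $\int\psi\,d\mu$. You instead first show that a fixed function has an a.s.\ constant trajectory, $\psi(X_{kt})=\psi(X_0)$, via orthogonality of $L^2$-martingale increments (or, equivalently, via the Jensen-equality argument the paper itself uses in the proof of Proposition~\ref{p:Kbounded}), and then --- in your ``elementary'' finish --- upgrade the $\mu$-a.e.\ invariance of the level sets $B_c$ to honest pointwise $\rho_t$-invariance of the Borel sets $A_c=\bigcap_{k\ge1}\{\rho_{kt}(\cdot,B_c)=1\}$ via Chapman--Kolmogorov, so that the paper's stated pointwise definition of ergodicity applies verbatim. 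This buys self-containedness: no two-sided extension, no path-space formulation of ergodicity, no ergodic theorem. (Your ``quick'' finish, by contrast, still leans on the same triviality of the invariant $\sigma$-field that the paper imports, so it is only a shortcut there, as you acknowledge.) The semigroup calculus for the two ``in particular'' claims coincides with the paper's, which leaves the last step $\psi=c\one\Rightarrow i\omega c\one=\calL(c\one)=0\Rightarrow c=0$ implicit; you spell it out. I find no gaps.
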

\begin{proof}
Concerning the ``in particular''-part, we only mention that $\calL\psi=0$ implies $K^t\psi=\psi$ for all $t\ge 0$ and that $\calL\psi = i\omega\psi$, $\omega\in \R \setminus \{0\}$, implies $K^{2\pi/\omega}\psi = \psi$. So, let us show that $K^t\psi = \psi$ for some $t>0$ and $\psi\in L^2_\mu(\calX)$ is only possible for constant $\psi$. For this, we consider the Markov process $(X_{nt})_{n=0}^\infty$. For convenience, we assume w.l.o.g.\ 
that $t=1$. By invariance of $\mu$, the process $(X_n)_{n=0}^\infty$ is stationary, i.e., $(X_n)_{n=0}^\infty$ and $(X_{n+1})_{n=0}^\infty$ are equally distributed as $\calX^{\N_0}$-valued random variables. According to \cite[Lemma 9.2]{kall} there exist $\calX$-valued random variables $X_{-k}$, $k\in\N$, such that $X := (X_n)_{n\in\Z}$ is also stationary. By $P_\mu$ denote the law of the $\calX^\Z$-valued random variable $X$.

On $S := \calX^\Z$ define the left shift $T : S\to S$ by $T(x_n)_{n\in\Z} := (x_{n+1})_{n\in\Z}$. Stationarity of $X$ means that also $TX\sim P_\mu$.

A set $\calA\in\calB_\calX^\Z := \bigotimes_{k\in\Z}\calB_\calX$ is called shift-invariant if $T^{-1}\calA = \calA$. It is easy to see that the set of shift-invariant sets forms a sub-$\sigma$-algebra $\calI$ of $\calB_\calX^\Z$. Now, by \cite[Corollary 5.11]{h} and the ergodicity of $\mu$ we have $P_\mu(\calA)\in\{0,1\}$ for any $\calA\in\calI$. Now, Birkhoff's Ergodic Theorem \cite[Theorem 9.6]{kall} states that
\begin{equation}\label{e:birkhoff}
\lim_{n\to\infty}\frac 1n\sum_{k=0}^{n-1}f(T^kX) = \bE\big[f(X)|X^{-1}\calI\big]
\end{equation}
almost surely and in $L^1(\Omega)$ for any $f\in L^1(S)$. Given $\psi\in L^1_\mu(\calX)$, let us apply this theorem to the function $f = \psi\circ\pi_0$, where the projection $\pi_0 : S\to X$ is defined by $\pi_0(x_n)_{n\in\Z} = x_0$. First of all,
$$
\int|f|\,dP_\mu = \int|\psi(x_0)|\,dP_\mu((x_n)_{n\in\Z}) = \int|\psi(x)|\,d\mu(x) < \infty
$$
as $P_\mu\circ\pi_0^{-1} = \mu$. Hence, we have $f\in L^1(S)$. Furthermore, we compute $f(T^kX) = \psi(\pi_0(T^kX)) = \psi(X_k)$. For $\calA\in\calI$ we have $\bP(X^{-1}\calA) = P_\mu(\calA)\in\{0,1\}$. Thus, we obtain
$$
\lim_{n\to\infty}\frac 1n\sum_{k=0}^{n-1}\psi(X_k) = \bE[f(X)] = \int f\,dP_\mu = \int\psi\circ\pi_0\,dP_\mu = \int\psi\,d\mu
$$
almost surely and in $L^1(\Omega)$. Therefore, if $\psi\in L^2_\mu(\calX)$ such that $K^t\psi = \psi$, then $K^{kt}\psi = \psi$ for all $k\in\N_0$, hence for $\mu$-a.e.\ $x\in X$ we have
\begin{align*}
\psi(x)
&= \frac 1n\sum_{k=0}^{n-1}K^{kt}\psi(x) = \frac 1n\sum_{k=0}^{n-1}\bE[\psi(X_{kt})|X_0=x]\\
&= \bE\left[\frac 1n\sum_{k=0}^{n-1}\psi(X_{kt})\,\Bigg|\,X_0=x\right]\,\overset{n\to\infty}{\longrightarrow}\int\psi\,d\mu.
\end{align*}
Thus, $\psi$ must indeed be ($\mu$-essentially) constant.
\end{proof}

\section{Koopman-invariance of the Gaussian RKHS in case of the Ornstein-Uhlenbeck process}\label{s:OU_invariance}
In many works in the present literature on the Koopman operator for deterministic dynamical systems in connection with kernels, it is assumed that the Koopman operator maps the RKHS boundedly (or even compactly) into itself. However, as it was proved in \cite{gakr}, the Koopman operator of a discrete-time system on $\R^n$ is invariant under the radial basis function RKHS if and only if the dynamics are affine.

In the following, we show that the situation is essentially different for stochastic systems in that we prove that the RBF RKHS is invariant under the Koopman operator associated with the OU process.

The Ornstein-Uhlenbeck (OU) process on $X = \R$ is the solution of the SDE $dX_t = -\alpha X_t\,dt + dW_t$. The invariant measure $\mu$ and the Markov transition kernel $\rho_t$, $t>0$, are known and given by
\begin{align*}
d\mu(x) = \sqrt{\frac{\alpha}{\pi}}\cdot e^{-\alpha x^2}\,dx
\qquad\text{and}\qquad
\rho_t(x,dy) = \sqrt{\frac{c_t}{\pi}}\cdot\exp\Big[-c_t(y - e^{-\alpha t}x)^2\Big]\,dy,
\end{align*}
where
$$
c_t = \frac{\alpha}{1 - e^{-2\alpha t}}.
$$
The Koopman operators $K^t$ are self-adjoint in $L^2_\mu(\R)$.

We consider the Gaussian radial basis function (RBF) kernel with bandwidth $\sigma>0$, i.e.,
\begin{equation*}
    k^\sigma(x, y) = \exp\left[-\frac{(x-y)^2}{\sigma^2}\right].
\end{equation*}
By $\bH_\sigma$ we denote the RKHS generated by the kernel $k^\sigma$. Hilbert space norm and scalar product on $\bH_\sigma$ will be denoted by $\|\cdot\|_\sigma$ and $\<\cdot\,,\,\cdot\>_\sigma$, respectively. For $y\in\R$ and a kernel $k$ on $\R$ set $k_y(x) := k(x,y)$. For two positive definite kernels on $\R$ we write $k_1\preceq k_2$ if
$$
\sum_{i,j=1}^n\alpha_i\alpha_j k_1(x_i,x_j)\le \sum_{i,j=1}^n\alpha_i\alpha_j k_2(x_i,x_j)
$$
for any choice of $n\in\N$ and $\alpha_j,x_j\in\R$, $j=1,\ldots,n$. We also write $V\lhook\joinrel\xrightarrow{\hspace{.1cm}c\hspace{.1cm}}W$ for two normed vector spaces $V$ and $W$ if $V\subset W$ is continuously embedded in $W$.

\begin{lem}
Let $0 < \sigma_1 < \sigma_2$. Then we have  $\bH_{\sigma_2}\!\lhook\joinrel\xrightarrow{\hspace{.1cm}c\hspace{.1cm}}\bH_{\sigma_1}$ with $\|\psi\|_{\sigma_1}\le\sqrt{\frac{\sigma_2}{\sigma_1}}\|\psi\|_{\sigma_2}$ for $\psi\in\bH_{\sigma_2}$ and $k^{\sigma_2}\preceq \frac{\sigma_2}{\sigma_1}k^{\sigma_1}$.
\end{lem}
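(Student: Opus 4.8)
The plan is to reduce everything to the single kernel inequality $k^{\sigma_2}\preceq\frac{\sigma_2}{\sigma_1}k^{\sigma_1}$ and then invoke the classical RKHS inclusion/embedding theorem of Aronszajn (see \cite[Section~5]{pr} or \cite{a}): if $k_1,k_2$ are positive definite kernels on a set $X$ and $k_1\preceq c\,k_2$ for some $c>0$, then $\bH_{k_1}\subseteq\bH_{k_2}$ and $\|\psi\|_{\bH_{k_2}}\le\sqrt c\,\|\psi\|_{\bH_{k_1}}$ for every $\psi\in\bH_{k_1}$, so in particular the inclusion map is bounded. Applying this with $k_1=k^{\sigma_2}$, $k_2=k^{\sigma_1}$ and $c=\sigma_2/\sigma_1$ yields $\bH_{\sigma_2}\lhook\joinrel\xrightarrow{\hspace{.1cm}c\hspace{.1cm}}\bH_{\sigma_1}$ together with $\|\psi\|_{\sigma_1}\le\sqrt{\sigma_2/\sigma_1}\,\|\psi\|_{\sigma_2}$, which is exactly the assertion. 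For self-containedness one can recall that this inclusion theorem follows from Aronszajn's sum theorem: writing $c\,k^{\sigma_1}=k^{\sigma_2}+k_3$ with $k_3:=c\,k^{\sigma_1}-k^{\sigma_2}$ a kernel, one has $\bH_{c\,k^{\sigma_1}}=\bH_{\sigma_2}+\bH_{k_3}$ with $\|\psi\|_{\bH_{c\,k^{\sigma_1}}}^2=\min\{\|\psi_1\|_{\sigma_2}^2+\|\psi_2\|_{\bH_{k_3}}^2:\psi=\psi_1+\psi_2\}$; since moreover $\|\psi\|_{\bH_{c\,k^{\sigma_1}}}=c^{-1/2}\|\psi\|_{\sigma_1}$, choosing $\psi_1=\psi$, $\psi_2=0$ for $\psi\in\bH_{\sigma_2}$ gives $c^{-1/2}\|\psi\|_{\sigma_1}\le\|\psi\|_{\sigma_2}$.

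It remains to establish $k^{\sigma_2}\preceq\frac{\sigma_2}{\sigma_1}k^{\sigma_1}$, i.e.\ that $h(u):=\frac{\sigma_2}{\sigma_1}e^{-u^2/\sigma_1^2}-e^{-u^2/\sigma_2^2}$ is a positive definite function on $\R$ (these kernels being translation invariant, $k^\sigma(x,y)=e^{-(x-y)^2/\sigma^2}$). Here I would use Bochner's theorem, which characterises continuous positive definite functions as Fourier transforms of finite nonnegative measures. With the convention $\wh f(\xi)=\int_\R f(u)e^{-iu\xi}\,du$ one computes $\wh{g_\sigma}(\xi)=\sigma\sqrt\pi\,e^{-\sigma^2\xi^2/4}$ for $g_\sigma(u)=e^{-u^2/\sigma^2}$, hence
$$
\wh h(\xi)=\tfrac{\sigma_2}{\sigma_1}\cdot\sigma_1\sqrt\pi\,e^{-\sigma_1^2\xi^2/4}-\sigma_2\sqrt\pi\,e^{-\sigma_2^2\xi^2/4}=\sigma_2\sqrt\pi\,\bigl(e^{-\sigma_1^2\xi^2/4}-e^{-\sigma_2^2\xi^2/4}\bigr)\ge 0,
$$
the last inequality being precisely where $\sigma_1<\sigma_2$ enters (then $\sigma_1^2\xi^2/4\le\sigma_2^2\xi^2/4$, so the first Gaussian dominates the second). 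Since $\wh h\ge 0$ and $\wh h\in L^1(\R)$ (a difference of Gaussians), Fourier inversion gives $h(u)=\int_\R e^{iu\xi}\,d\nu(\xi)$ with the finite nonnegative measure $d\nu=\tfrac1{2\pi}\wh h(\xi)\,d\xi$, so $h$ is positive definite by Bochner's theorem.

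There is essentially no real obstacle here: the only nontrivial point is the sign of $\wh h$, which is a one-line monotonicity estimate, and one checks that $\sigma_2/\sigma_1$ is the optimal constant since $\wh h(0)=0$. The only thing to be careful about is bookkeeping — tracking the Fourier normalisation so that the correct $c=\sigma_2/\sigma_1$ comes out, and keeping straight the direction of the norm inequality in the inclusion theorem, namely $\|\cdot\|_{\bH_{k_2}}\le\sqrt c\,\|\cdot\|_{\bH_{k_1}}$ when $k_1\preceq c\,k_2$. An equally short alternative would avoid the abstract inclusion theorem altogether and use the explicit Fourier description of the Gaussian RKHS norm, $\|\psi\|_\sigma^2=(2\pi\sigma\sqrt\pi)^{-1}\int_\R|\wh\psi(\xi)|^2 e^{\sigma^2\xi^2/4}\,d\xi$ (cf.\ \cite{shs}): then $\|\psi\|_{\sigma_1}^2=\frac{\sigma_2}{\sigma_1}\,(2\pi\sigma_2\sqrt\pi)^{-1}\int_\R|\wh\psi|^2 e^{\sigma_1^2\xi^2/4}\,d\xi\le\frac{\sigma_2}{\sigma_1}\,\|\psi\|_{\sigma_2}^2$ directly, and finiteness of the right-hand side for $\psi\in\bH_{\sigma_2}$ forces $\psi\in\bH_{\sigma_1}$.
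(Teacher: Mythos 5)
Your proof is correct, but it runs the logic in the opposite direction from the paper. The paper's proof takes the embedding $\bH_{\sigma_2}\!\lhook\joinrel\xrightarrow{\hspace{.1cm}c\hspace{.1cm}}\bH_{\sigma_1}$ with constant $\sqrt{\sigma_2/\sigma_1}$ as the primary fact, citing it from Steinwart--Hush--Scovel's explicit Fourier-side description of the Gaussian RKHS (\cite[Corollary 6]{shs}), and then \emph{deduces} the kernel domination $k^{\sigma_2}\preceq\frac{\sigma_2}{\sigma_1}k^{\sigma_1}$ from the converse direction of Aronszajn's inclusion theorem (bounded inclusion $\Rightarrow$ kernel domination). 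You instead prove the kernel domination from scratch — checking via the Fourier transform that $\frac{\sigma_2}{\sigma_1}e^{-u^2/\sigma_1^2}-e^{-u^2/\sigma_2^2}$ has nonnegative (and integrable) Fourier transform, hence is positive definite — and then obtain the embedding and the norm bound from the forward direction of the inclusion theorem. Your computation of $\wh h$ is correct, the constant $\sigma_2/\sigma_1$ comes out right, and the application of \cite[Theorem 5.1]{pr} with $c^2=\sigma_2/\sigma_1$ gives exactly $\|\psi\|_{\sigma_1}\le\sqrt{\sigma_2/\sigma_1}\,\|\psi\|_{\sigma_2}$. What your route buys is self-containedness: it replaces the citation of the Gaussian-specific Corollary 6 of \cite{shs} by an elementary one-line Fourier estimate, and only the general (kernel-independent) inclusion theorem is quoted; your closing alternative via the explicit norm formula $\|\psi\|_\sigma^2=(2\pi\sigma\sqrt\pi)^{-1}\int|\wh\psi|^2e^{\sigma^2\xi^2/4}\,d\xi$ is essentially a re-derivation of that same Corollary 6, so it closes the circle with the paper's argument.
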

\begin{proof}
The first claim is Corollary 6 in \cite{shs}. The second follows from Aronszajn's inclusion theorem \cite[Theorem 5.1]{pr}.
\end{proof}

\begin{prop}
For each $t\ge 0$ and all $\alpha,\sigma > 0$ we have $K^t\bH_\sigma\subset\bH_\sigma$ with
$$
\big\|K^t\big\|_{\bH_\sigma\to\bH_\sigma}\le e^{\frac\alpha 2t}.
$$
\end{prop}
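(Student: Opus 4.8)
The plan is to factor the restriction $K^t|_{\bH_\sigma}$ through an auxiliary Gaussian RKHS of a strictly \emph{larger} bandwidth, on which $K^t$ and the ensuing inclusion both act as contractions up to the stated constant. Since $\varphi\equiv 1$ for the Gaussian RBF kernel, every $\psi\in\bH_\sigma$ is bounded, so $(K^t\psi)(x)=\int\psi(y)\,\rho_t(x,dy)$ is well defined pointwise. Writing $\Phi_\sigma(y):=k^\sigma(\cdot,y)\in\bH_\sigma$, using $\psi(y)=\langle\psi,\Phi_\sigma(y)\rangle_\sigma$, and noting $\|\Phi_\sigma(y)\|_\sigma=1$, the $\bH_\sigma$-valued Bochner integral $\Psi_t(x):=\int\Phi_\sigma(y)\,\rho_t(x,dy)$ exists and
\[
(K^t\psi)(x)=\langle\psi,\Psi_t(x)\rangle_\sigma,\qquad x\in\R .
\]
Hence $K^t$ maps $\bH_\sigma$ into the RKHS $\bH_{\kappa_t}$ generated by the feature map $\Psi_t$, i.e.\ by the kernel $\kappa_t(x,x')=\langle\Psi_t(x),\Psi_t(x')\rangle_\sigma=\int\!\int k^\sigma(y,y')\,\rho_t(x,dy)\,\rho_t(x',dy')$, and by the canonical feature-map-to-kernel construction (see \cite{pr}) one has $\|K^t\psi\|_{\kappa_t}\le\|\psi\|_\sigma$ for all $\psi\in\bH_\sigma$ (the map $\psi\mapsto K^t\psi$ being a coisometry onto $\bH_{\kappa_t}$).

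Next I would identify $\kappa_t$ explicitly. Under $\rho_t(x,\cdot)\otimes\rho_t(x',\cdot)$ the difference $y-y'$ is Gaussian with mean $e^{-\alpha t}(x-x')$ and variance $(1-e^{-2\alpha t})/\alpha$, so $\kappa_t(x,x')=\bE\big[e^{-(y-y')^2/\sigma^2}\big]$ is a routine Gaussian moment. It evaluates to
\[
\kappa_t(x,x')=A_t\,e^{-(x-x')^2/(\sigma_t')^2},\quad A_t=\sqrt{\alpha\sigma^2/\big(\alpha\sigma^2+2(1-e^{-2\alpha t})\big)},\quad (\sigma_t')^2=\frac{\alpha\sigma^2+2(1-e^{-2\alpha t})}{\alpha e^{-2\alpha t}},
\]
that is, $\kappa_t=A_t\,k^{\sigma_t'}$, where $\sigma_t'>\sigma$ for every $t>0$ (immediate from the formula; the case $t=0$ is trivial since $K^0=\Id$).

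Since $\sigma<\sigma_t'$, the bandwidth-comparison Lemma above gives $k^{\sigma_t'}\preceq\tfrac{\sigma_t'}{\sigma}k^\sigma$, hence $\kappa_t\preceq A_t\tfrac{\sigma_t'}{\sigma}k^\sigma$, and Aronszajn's inclusion theorem yields $\bH_{\kappa_t}\lhook\joinrel\xrightarrow{\hspace{.1cm}c\hspace{.1cm}}\bH_\sigma$ with $\|\eta\|_\sigma\le(A_t\sigma_t'/\sigma)^{1/2}\|\eta\|_{\kappa_t}$. Combining this with the contraction from the first step,
\[
\|K^t\psi\|_\sigma\le(A_t\sigma_t'/\sigma)^{1/2}\,\|K^t\psi\|_{\kappa_t}\le(A_t\sigma_t'/\sigma)^{1/2}\,\|\psi\|_\sigma ,
\]
so $K^t\bH_\sigma\subset\bH_\sigma$ with $\|K^t\|_{\bH_\sigma\to\bH_\sigma}\le(A_t\sigma_t'/\sigma)^{1/2}$. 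A one-line cancellation of numerators and denominators in the formulas for $A_t$ and $\sigma_t'$ gives the clean identity $A_t^2(\sigma_t')^2/\sigma^2=e^{2\alpha t}$, whence $A_t\sigma_t'/\sigma=e^{\alpha t}$ and the claimed bound $\|K^t\|_{\bH_\sigma\to\bH_\sigma}\le e^{\alpha t/2}$ follows.

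The step I expect to require the most care is the first one: the image $K^t\bH_\sigma$ a priori lies in a \emph{different} RKHS, so one cannot estimate $\|K^t\psi\|_\sigma$ directly and must route through $\bH_{\kappa_t}$ and the coisometry property. For instance, the tempting decomposition of $K^t$ into convolution with the Gaussian transition density followed by a dilation does \emph{not} work for small $\sigma$, since convolution with a fixed Gaussian need not map $\bH_\sigma$ back into a Gaussian RKHS. The ensuing Gaussian bookkeeping is routine, but must be carried out carefully, since it is exactly the cancellation $A_t^2(\sigma_t')^2=e^{2\alpha t}\sigma^2$ that produces the sharp constant $e^{\alpha t/2}$ rather than a weaker one.
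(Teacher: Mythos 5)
Your argument is correct, and it reaches the sharp constant $e^{\alpha t/2}$ by a genuinely different route than the paper. The paper computes the action on kernel sections explicitly, obtaining $K^tk^\sigma_z=\tau\,k^\nu_{e^{\alpha t}z}$ with $\tau^2=c_t\sigma^2/(1+c_t\sigma^2)$ and $\nu=e^{\alpha t}\sigma/\tau$, bounds $\|K^t\psi\|_\sigma$ on the dense subspace $\linspan\{k^\sigma_x:x\in\R\}$ by two applications of the bandwidth-comparison lemma (once as a norm inequality $\|\cdot\|_\sigma\le\sqrt{\nu/\sigma}\,\|\cdot\|_\nu$, once as the kernel domination $k^{\sigma/\tau}\preceq\tau^{-1}k^\sigma$ after the rescaling $k^\nu(e^{\alpha t}x,e^{\alpha t}y)=k^{\sigma/\tau}(x,y)$), and then must extend by density and separately identify the bounded extension with $K^t$ via convergence in $L^2_\mu(\R)$ and continuity of the functions involved. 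You instead factor $K^t|_{\bH_\sigma}$ as a coisometry $\bH_\sigma\to\bH_{\kappa_t}$ (the standard feature-map construction applied to $\Psi_t(x)=\int\Phi_\sigma(y)\,\rho_t(x,dy)$) followed by the inclusion $\bH_{\kappa_t}\subset\bH_\sigma$, which requires only one application of the comparison lemma and, crucially, yields the bound for \emph{every} $\psi\in\bH_\sigma$ at once, so the density/extension/identification step of the paper's proof disappears. Your Gaussian bookkeeping checks out: $y-y'$ under $\rho_t(x,\cdot)\otimes\rho_t(x',\cdot)$ has mean $e^{-\alpha t}(x-x')$ and variance $(1-e^{-2\alpha t})/\alpha$, giving exactly the stated $A_t$ and $\sigma_t'$, and these are consistent with applying the paper's one-step formula twice (one finds $A_t=\tau\tau'$ with $\tau'^2=(1+c_t\sigma^2)/(2+c_t\sigma^2)$ and $\sigma_t'$ the corresponding second-stage bandwidth); the identity $A_t^2(\sigma_t')^2=e^{2\alpha t}\sigma^2$ then delivers the same constant. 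The trade-off is that the paper's computation of $K^t$ on individual kernel sections is more explicit and elementary, while your factorization is structurally cleaner and would generalize immediately to any Markov kernel $\rho_t$ for which the induced kernel $\kappa_t$ is dominated by a multiple of $k^\sigma$.
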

\begin{proof}
For $t=0$ the claim is obviously true. Hence, suppose that $t>0$. For $z\in\R$ let us compute $K^tk^\sigma_z$. For this, we define
$$
\tau = \sqrt{\frac{c_t\sigma^2}{1+c_t\sigma^2}},
\qquad\text{and}\qquad
\nu = \frac{e^{\alpha t}}{\tau}\sigma\,>\,\frac\sigma\tau > \sigma.
$$
Since for $\sigma_1,\sigma_2>0$ and $z,w\in\R$ we have
\[
\int_{-\infty}^\infty k_z^{\sigma_1}(x)\,k_w^{\sigma_2}(x)\,dx = \sqrt{\pi}\cdot\frac{\sigma_1\sigma_2}{\sqrt{\sigma_1^2 + \sigma_2^2}}\cdot\exp\left(-\frac{(z-w)^2}{\sigma_1^2+\sigma_2^2}\right),
\]
with $\sigma_t = 1/\sqrt{c_t}$ we obtain
\begin{align*}
(K^tk_z^\sigma)(x)
&= \int_{-\infty}^\infty k_z^\sigma(y)\,\rho_t(x,dy) = \sqrt{\frac{c_t}{\pi}}\int_{-\infty}^\infty k_z^\sigma(y)\exp\Big[-c_t(y - e^{-\alpha t}x)^2\Big]\,dy\\
&= \sqrt{\frac{c_t}{\pi}}\int_{-\infty}^\infty k_z^\sigma(y)k_{e^{-\alpha t}x}^{\sigma_t}(y)\,dy = \sqrt{c_t}\cdot\frac{\sigma\sigma_t}{\sqrt{\sigma^2 + \sigma_t^2}}\cdot \exp\left(-\frac{(z-e^{-\alpha t}x)^2}{\sigma^2+\sigma_t^2}\right)\\
&= \frac{\sigma}{\sqrt{\sigma^2 + 1/c_t}}\cdot\exp\left(-\frac{(e^{\alpha t}z-x)^2}{e^{2\alpha t}(\sigma^2+1/c_t)}\right)
= \tau\cdot k^\nu_{e^{\alpha t}z}(x).
\end{align*}
That is,
$$
K^tk_z^\sigma = \tau\cdot k^\nu_{e^{\alpha t}z}\in\bH_\nu.
$$
Note that
$$
k^\nu(e^{\alpha t}x,e^{\alpha t}y) = \exp\left(-\frac{e^{2\alpha t}(x-y)^2}{\nu^2}\right) = \exp\left(-\frac{\tau^2(x-y)^2}{\sigma^2}\right) = k^{\sigma/\tau}(x,y).
$$
Now, let $n\in\N$ and $\alpha_j,x_j\in\R$, $j=1,\ldots,n$, be arbitrary and let $\psi = \sum_{j=1}^n\alpha_jk^\sigma_{x_j}$. Then
\begin{align*}
\|K^t\psi\|_\sigma^2
&\le\frac{\nu}{\sigma}\|K^t\psi\|_\nu^2
= \frac{\nu}{\sigma}\Bigg\|\sum_{j=1}^n\alpha_jK^tk^\sigma_{x_j}\Bigg\|_\nu^2 = \frac{\nu\tau^2}{\sigma}\Bigg\|\sum_{j=1}^n\alpha_jk^\nu_{e^{\alpha t}x_j}\Bigg\|_\nu^2\\
&= \frac{\nu\tau^2}{\sigma}\sum_{i,j=1}^n\alpha_i\alpha_jk^\nu(e^{\alpha t}x_i,e^{\alpha t}x_j) = \frac{\nu\tau^2}{\sigma}\sum_{i,j=1}^n\alpha_i\alpha_jk^{\sigma/\tau}(x_i,x_j)\\
&\le \frac{\nu\tau^2}{\sigma}\cdot\frac 1\tau\sum_{i,j=1}^n\alpha_i\alpha_jk^{\sigma}(x_i,x_j) = \frac{\nu\tau}\sigma\|\psi\|_\sigma^2 = e^{\alpha t}\|\psi\|_\sigma^2.
\end{align*}
This shows that $K^t$ maps $\bH_{0,\sigma} = \linspan\{k^\sigma_x : x\in\R\}\subset\bH_\sigma$ boundedly into $\bH_\sigma$. Since $\bH_{0,\sigma}$ is dense in $\bH_\sigma$, it follows that $K^t|_{\bH_{0,\sigma}}$ extends to a bounded operator $T$ in $\bH_\sigma$. In order to see that $T\psi = K^t\psi$ for $\psi\in\bH_\sigma$, let $(\psi_n)\subset\bH_{0,\sigma}$ such that $\psi_n\to\psi$ in $\bH_\sigma$. Then $K^t\psi_n = T\psi_n\to T\psi$ in $\bH_\sigma$. Since $\bH_\sigma\lhook\joinrel\xrightarrow{\hspace{.1cm}c\hspace{.1cm}}L^2_\mu(\R)$, we have $\psi_n\to\psi$ in $L^2_\mu(\R)$ and thus $K^t\psi_n\to K^t\psi$ in $L^2_\mu(\R)$. Also, $K^t\psi_n\to T\psi$ in $L^2_\mu(\R)$. Hence, $K^t\psi=T\psi$ $\mu$-a.e.\ on $\R$. But as both $K^t\psi$ and $T\psi$ are continuous and $\mu$ is absolutely continuous w.r.t.\ Lebesgue measure with a positive density, we conclude that $K^t\psi = T\psi\in\bH_\sigma$.
\end{proof}

\begin{cor}
The restriction $K^t_\sigma := K^t|_{\bH_\sigma}\in L(\bH_\sigma)$ does not have eigenvalues and is thus not compact. Moreover, $K_\sigma^t$ is not self-adjoint.
\end{cor}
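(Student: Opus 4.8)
Write-up plan. The strategy is to transfer everything to $L^2_\mu(\R)$, where the Koopman operator of the OU process is completely understood (it is self-adjoint with the Hermite polynomials $\psi_j$ as a complete orthonormal eigenbasis and the pairwise distinct eigenvalues $e^{-\alpha jt}$, $j\in\N_0$), and to combine this with two elementary features of the Gaussian RKHS: every $\psi\in\bH_\sigma$ satisfies $\|\psi\|_\infty\le\|\psi\|_\sigma$, and every $\psi\in\bH_\sigma$ vanishes at infinity. The decay property I would obtain from $\langle k^\sigma_x,k^\sigma_y\rangle_\sigma=k^\sigma(x,y)\to 0$ as $|x|\to\infty$ together with density of $\linspan\{k^\sigma_y:y\in\R\}$ in $\bH_\sigma$: this gives $k^\sigma_x\rightharpoonup 0$ weakly, hence $\psi(x)=\langle\psi,k^\sigma_x\rangle_\sigma\to 0$ for every $\psi\in\bH_\sigma$. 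Throughout one assumes $t>0$ (for $t=0$ the operator is the identity, which of course has the eigenvalue $1$).

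For the absence of eigenvalues, I would suppose $K^t_\sigma\psi=\lambda\psi$ with $\psi\in\bH_\sigma$, $\psi\ne 0$. By the preceding proposition $K^t_\sigma$ is the restriction to $\bH_\sigma$ of the $L^2_\mu(\R)$-Koopman operator $K^t$, and the inclusion $\bH_\sigma\subset L^2_\mu(\R)$ is injective (this is (A3), valid for the RBF kernel). Hence $\psi$ is a nonzero eigenvector of $K^t$ on $L^2_\mu(\R)$, and, since $K^t$ is diagonal in the Hermite basis with distinct eigenvalues, $\psi$ must be a scalar multiple of some $\psi_j$. But $\psi_j$ is a nonzero polynomial, so it does not tend to $0$ at infinity, contradicting membership in $\bH_\sigma$. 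Thus $K^t_\sigma$ has no eigenvalues.

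It then follows that $K^t_\sigma$ is not compact: a compact operator on the infinite-dimensional space $\bH_\sigma$ with empty point spectrum would have spectrum $\{0\}$ and hence spectral radius $0$. To rule this out I would use $(K^t_\sigma)^n=K^{nt}_\sigma$ together with the explicit identity $K^{nt}k^\sigma_0=\tau_{nt}\,k^{\nu_{nt}}_0$ from the proof above, where $\tau_{nt}^2=c_{nt}\sigma^2/(1+c_{nt}\sigma^2)$ with $c_{nt}=\alpha/(1-e^{-2\alpha nt})\ge\alpha$; since $\|k^{\nu_{nt}}_0\|_\sigma\ge\|k^{\nu_{nt}}_0\|_\infty=1$ and $\|k^\sigma_0\|_\sigma=1$, this yields $\|K^{nt}_\sigma\|\ge\tau_{nt}\ge(\alpha\sigma^2/(1+\alpha\sigma^2))^{1/2}$ for all $n$, whence $r(K^t_\sigma)=\lim_n\|K^{nt}_\sigma\|^{1/n}\ge 1>0$. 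For the failure of self-adjointness I would evaluate, using the reproducing property in $\bH_\sigma$ and $K^tk^\sigma_x=\tau\,k^\nu_{e^{\alpha t}x}\in\bH_\nu\subset\bH_\sigma$,
$$
\langle K^t_\sigma k^\sigma_x,k^\sigma_y\rangle_\sigma=\tau\exp\!\big(-(e^{\alpha t}x-y)^2/\nu^2\big)
\quad\text{and}\quad
\langle k^\sigma_x,K^t_\sigma k^\sigma_y\rangle_\sigma=\tau\exp\!\big(-(e^{\alpha t}y-x)^2/\nu^2\big);
$$
self-adjointness would force $(e^{\alpha t}x-y)^2=(e^{\alpha t}y-x)^2$ for all $x,y\in\R$, which at $y=0$ gives $e^{2\alpha t}=1$, impossible for $\alpha,t>0$.

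The only genuinely non-formal step is the first one — showing that no eigenvector of the $L^2_\mu(\R)$-Koopman operator can lie in $\bH_\sigma$. Once the two soft properties of $\bH_\sigma$ (uniform boundedness and decay at infinity) are available and one recalls that the $L^2_\mu$-eigenfunctions are exactly the Hermite polynomials, the contradiction is immediate. The quantitative lower bound on the spectral radius needed to promote ``no eigenvalues'' to ``not compact'' is the only place the explicit OU transition kernel enters, and it is elementary; the non-self-adjointness is a two-line computation with feature maps.
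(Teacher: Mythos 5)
Your proof is correct, and the core of the eigenvalue argument is the same as the paper's: pass to $L^2_\mu(\R)$, use that the eigenspaces of $K^t$ are spanned by the Hermite polynomials $\psi_j$ with pairwise distinct eigenvalues $e^{-\alpha jt}$, and derive a contradiction with membership in $\bH_\sigma$. You differ in two worthwhile ways. First, where the paper uses boundedness ($\|\psi\|_\infty\le\|\psi\|_\sigma$) to conclude that the eigenfunction must be constant and then invokes the external fact $\one\notin\bH_\sigma$ from Steinwart--Hush--Scovel, you use decay at infinity ($k^\sigma_x\rightharpoonup 0$ weakly, hence $\psi(x)\to 0$), which kills \emph{every} nonzero polynomial at once and makes the argument self-contained. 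Second, and more importantly, you correctly recognize that ``no eigenvalues, hence not compact'' is not automatic --- a compact operator can have empty point spectrum (e.g.\ the Volterra operator) --- and you close this gap with the lower bound $\|K^{nt}_\sigma\|\ge\tau_{nt}\ge\bigl(\alpha\sigma^2/(1+\alpha\sigma^2)\bigr)^{1/2}$, giving spectral radius $r(K^t_\sigma)\ge 1>0$, so that compactness would force a nonzero eigenvalue. The paper's proof silently skips this step, so your version is strictly more complete. Your non-self-adjointness computation via $\langle K^t_\sigma k^\sigma_x,k^\sigma_y\rangle_\sigma=\tau\exp\bigl(-(e^{\alpha t}x-y)^2/\nu^2\bigr)$ is exactly the calculation the paper leaves to the reader, and evaluating at $y=0$ settles it.
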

\begin{proof}
It is well known that $K^t$ (on $L^2_\mu$) has the eigenvalues $\la_j = e^{-\alpha jt}$, $j\in\N$, and that the corresponding eigenfunctions $\psi_j$ are polynomials. Let now $K^t_\sigma\psi=\la\psi$ for some $\psi\in\bH_\sigma$ and $\la\in\C$. Then also $K^t\psi = \la\psi$ and so $\la=\la_j$ and $\psi\in\linspan\{\psi_j\}$ for some $j$. Hence, $\psi$ is a polynomial. Since for all $x\in\R$ we have $|\psi(x)| = |\<\psi,k_x^\sigma\>_\sigma|\le \|\psi\|_\sigma\|k_x^\sigma\|_\sigma = \|\psi\|_\sigma$, it follows that $\psi$ is bounded, which is only possible for $\psi\in\linspan\{\one\}$. But $\one\notin\bH_\sigma$ by \cite[Corollary 5]{shs}. Hence, $\psi=0$. The non-selfadjointness of $K_\sigma^t$ can be easily seen from computing $\<K_\sigma^tk_z^\sigma,k_w^\sigma\>_\sigma$ and $\<k_z^\sigma,K_\sigma^tk_w^\sigma\>_\sigma$ for $z\neq w$.
\end{proof}

\bigskip
\section*{Author Affiliations}
\end{document}